\numberwithin{equation}{section}
\numberwithin{figure}{section}
\theoremstyle{plain}
\newtheorem{thm}{\protect\theoremname}
  \theoremstyle{plain}
  \newtheorem{lem}[thm]{\protect\lemmaname}
  \theoremstyle{definition}
  \newtheorem{defn}[thm]{\protect\definitionname}
  \theoremstyle{remark}
  \newtheorem*{rem*}{\protect\remarkname}
  \theoremstyle{plain}
  \newtheorem{prop}[thm]{\protect\propositionname}
  \theoremstyle{plain}
  \newtheorem{cor}[thm]{\protect\corollaryname}
  \providecommand{\corollaryname}{Corollary}
  \providecommand{\definitionname}{Definition}
  \providecommand{\lemmaname}{Lemma}
  \providecommand{\propositionname}{Proposition}
  \providecommand{\remarkname}{Remark}
\providecommand{\theoremname}{Theorem}
\begin{document}

\title[$q$-heat flow and the gradient flow of the Renyi entropy]{$q$-heat flow and the gradient flow of the Renyi entropy in the
$p$-Wasserstein space}

\author{Martin Kell}

\email{mkell@mis.mpg.de}

\address{Max-Planck-Institute for Mathematics in the Sciences, Inselstr. 22,
04103 Leipzig, Germany}

\thanks{The author wants to thank Prof. Jürgen Jost and the MPI MiS for providing
a stimulating research environment. The research was funded by the
IMPRS ``Mathematics in the Sciences''.}
\begin{abstract}
Based on the idea of a recent paper by Ambrosio-Gigli-Savar\'e in
Invent. Math. (2013), we show that flow of the $q$-Cheeger energy,
called $q$-heat flow, solves the gradient flow problem of the Renyi
entropy functional in the $p$-Wasserstein. For that, a further study
of the $q$-heat flow is presented including a condition for its mass
preservation. Under a convexity assumption on the upper gradient,
which holds for all $q\ge2$, one gets uniqueness of the gradient
flow and the two flows can be identified. Smooth solution of the $q$-heat
flow are solution the parabolic $q$-Laplace equation, i.e. $\partial_{t}f_{t}=\Delta_{q}f_{t}$. 
\end{abstract}
\maketitle
The heat flow induced by a Dirichlet form is by now a well-understood
concept. In \cite{JKO1998} Jordan, Kinderlehrer and Otto showed in
the Euclidean setting that one can identify the heat flow with the
gradient flow of the entropy functional in the $2$-Wasserstein space.
The main idea was to show that the solution of the gradient flow problem
solves also the heat equation. Uniqueness of the solution implies
that the two flows are identical. The identification of the heat flow
and the gradient flow of the entropy functional on manifolds was later
accomplished by Erbar \cite{Erbar2010}. 

Otto \cite{Otto1996,Otto2001} also gave a formal proof of how to
use gradient flows in the $p$-Wasserstein spaces modeled on $\mathbb{R}^{n}$
in order to solve other equations like the porous media equation and
the parabolic $q$-Laplace equation, i.e. the $q$-heat flow. Rigorous
proofs were later given by Agueh \cite{Agueh2002,Agueh2005}. Only
recently Ohta and Takatsu \cite{OT2011,OT2011a} also showed that
a similar construction works on manifolds if the functionals are $K$-convex. 

All proofs until then required the contraction property which follows,
at least in the Riemannian setting, from the curvature dimension condition
introduced by Lott-Villani and Sturm \cite{LV2009,LV2007,Sturm2006a,Sturm2006}.
Since this condition can be defined on any metric measure spaces it
was believed that a similar identification holds also under such a
condition. In \cite{Gigli2009} Gigli gave a proof which did not require
the contraction property. This proof let Ambrosio-Gigli-Savar\'e
\cite{Ambrosio2013} to define a new generalized gradient from which
one gets a natural heat flow associated to a metric space. With the
help of a calculus of the heat flow and its mass preservation they
could show that the heat flow is a solution of the gradient flow problem
of the entropy functional in the $2$-Wasserstein space. Using a convexity
of the square of the upper gradient of the entropy functional one
gets uniqueness and hence the two flows are identical. 

One of the main ingredient of the proof was the Kuwada lemma, i.e.
if $\mu_{t}=f_{t}\mu$ is a solution of the heat flow and $|\dot{\mu}_{t}|$
is the metric derivative of $t\mapsto\mu_{t}$ in the $2$-Wasserstein
space $\mathcal{P}_{2}(M)$ then 
\[
|\dot{\mu}_{t}|^{2}\le\int\frac{|\nabla f_{t}|^{2}}{f_{t}}d\mu
\]
where the write hand side is called the Fisher information of $f_{t}$.
This was the ``missing'' ingredient, since it was long known that
the derivative along the heat flow $t\mapsto f_{t}$ of the entropy
functional is (minus) the Fisher information of $f_{t}$. 

In \cite{Ambrosio2011} Ambrosio-Gigli-Savar\'e showed the Kuwada
lemma for $q\ne2$, namely if $t\mapsto f_{t}$ is the $q$-heat flow
such that the density is bounded from above and away from zero from
below (implying the measure $\mu$ is finite), they showed 
\[
|\dot{\mu}_{t}|^{p}\le\int\frac{|\nabla f_{t}|^{q}}{f_{t}^{p-1}}d\mu
\]
where this time the metric derivative is taken in the $p$-Wasserstein
space $\mathcal{P}_{p}(M)$, $t\mapsto\mu_{t}=f_{t}\mu$ is a solution
of the $q$-heat flow and $p$ and $q$ are Hölder conjugates. A formal
calculation reveals that the derivative of the following functional
\[
f\mapsto\frac{1}{(3-p)(2-p)}\int f^{3-p}-fd\mu,
\]
called $(3-p)$-Renyi entropy, along the $q$-heat flow in the $p$-Wasserstein
space is exactly minus the right hand side of the previous inequality,
which can be called the $q$-Fisher information. 

In this paper, we will follow \cite{Ambrosio2013} and first develop
a calculus of the $q$-heat flow to show mass preservation in the
non-compact setting and that the formal calculation above holds in
an abstract setting. In case $q>2$ there is almost no restriction
on the measure to get mass preservation besides a ``not too bad''
growth of the measure of a ball. The cases $q<2$ are more restrictive.
Using generalized exponential functions already know from information
theory \cite[Section 3]{OT2011} one of the conditions can be stated
as 
\[
\int\exp_{p}(-V^{p})d\mu<\infty
\]
where $V(x)=Cd(x,x_{0})$ for some $C>0$ and $exp_{p}$ is the generalized
exponential function which agrees with the usual exponential function
and the condition with the condition stated in \cite{Ambrosio2013}.
In $\mathbb{R}^{n}$ this condition boils down to $q>\frac{2n}{n+1}$.
However, the current proof requires the more restrictive condition
\[
\int V^{p}\exp_{p}(-V^{p})d\mu<\infty.
\]

In the second part under some assumptions on the functional, which
hold assuming a curvature condition defined in a previous paper \cite{Kell2013a},
we show that the proof of \cite{Ambrosio2013} can be adjusted to
show that the $q$-heat flow solves the gradient flow problem of the
Renyi entropy in $\mathcal{P}_{p}(M)$. For $q>2$ we also get convexity
of the $q$-the power of the upper gradient and hence uniqueness of
the gradient flow. This implies that the $q$-heat flow and the gradient
flow of the Renyi entropy can be identified. The current proof of
the cases $ $$q<2$ requires the space to be compact and the measure
be $n$-Ahlfors regular for some $n$ depending on $q$. However,
this condition is satisfied on smooth manifolds if the the curvature
condition $CD_{p}(0,N)$, defined in a previous paper \cite{Kell2013a},
holds for $N>n$.

\section{Preliminaries}

In this part, we will introduce the main concepts used in this work.
We will follow the notation used in \cite{Ambrosio2013}. For a general
introduction to the theory of optimal transport via $2$-Wasserstein
spaces see \cite{Villani2009}, especially its Chapter 6 on Wasserstein
spaces. 

Let $(X,d)$ be a (complete) metric space and for simplicity we assume
that $X$ has no isolated points. As a convention we will always assume
that $(M,d,\mu)$ is a locally compact metric space equipped with
a locally finite Borel measure $\mu$ and if not otherwise stated
it is assumed to be geodesic (see below). Since we will also deal
with spaces which are not locally compact (e.g. $(\mathcal{P}_{p}(M),w_{p})$
with $M$ non-compact), the sections below do not assume that $(X,d)$
is locally compact.

\subsection*{Lipschitz constants and upper gradients}

Given a function $f:X\to\overline{\mathbb{R}}=[-\infty,\infty]$,
the local Lipschitz constant $|Df|:X\to[0,\infty]$ is given by 
\[
|Df|(x):=\limsup_{y\to x}\frac{|f(y)-f(x)|}{d(y,x)}
\]
for $x\in D(f)=\{x\in X\,|\, f(x)\in\mathbb{R}\}$, otherwise $|Df|(x)=\infty$.
The one sided versions $|D^{+}f|$ and $|D^{-}f|$, also called ascending
slope (resp. descending slope)
\begin{eqnarray*}
|D^{+}f|(x) & := & \limsup_{y\to x}\frac{[f(y)-f(x)]_{+}}{d(y,x)}\\
|D^{-}f|(x) & := & \limsup_{y\to x}\frac{[f(y)-f(x)]_{-}}{d(y,x)}
\end{eqnarray*}
for $x\in D(f)$ and $\infty$ otherwise, where $[r]_{+}=\max\{0,r\}$
and $[r]_{-}=\max\{0,-r\}$. It is not difficult to see that $|Df|$
is (locally) bounded iff $f$ is (locally) Lipschitz. 

The following lemma will be crucial to calculate the derivative of
functionals along the gradient flow of the Cheeger energy.
\begin{lem}
[{\cite[Lemma 2.5]{Ambrosio2013}}] \label{lem:conv-diff}Let $f,g:X\to\mathbb{R}$
be (locally) Lipschitz functions, $\phi:\mathbb{R}\to\mathbb{R}$
be a $C^{1}$-function with $0\le\phi'\le1$ and $\psi:[0,\infty)\to\mathbb{R}$
be a convex nondecreasing function. Setting
\[
\tilde{f}:=f+\phi(g-f),\qquad\tilde{g}=g-\phi(g-f)
\]
we have for every $x\in X$
\[
\psi(|D\tilde{f}|)(x)+\psi(|D\tilde{g}|)(x)\le\psi(|Df|)(x)+\psi(|Dg|)(x).
\]

\end{lem}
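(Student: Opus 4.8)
The plan is to reduce the statement to a pointwise convex-combination estimate on the local Lipschitz constants and then invoke monotonicity and convexity of $\psi$. Fix $x\in X$ and set $t:=\phi'(g(x)-f(x))$, so $t\in[0,1]$. I claim it suffices to prove
\[
|D\tilde{f}|(x)\le(1-t)|Df|(x)+t|Dg|(x),\qquad|D\tilde{g}|(x)\le t|Df|(x)+(1-t)|Dg|(x).
\]
Indeed, since $f$ and $g$ are locally Lipschitz, $|Df|(x)$ and $|Dg|(x)$ are finite, so both right-hand sides are points of $[0,\infty)$; applying first that $\psi$ is nondecreasing and then that $\psi$ is convex to each of the two displayed convex combinations, and finally summing the two resulting inequalities, the coefficients regroup as $(1-t)+t=1$ in front of $\psi(|Df|)(x)$ and of $\psi(|Dg|)(x)$, which is exactly the asserted inequality. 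One also notes here that $\tilde{f},\tilde{g}$ are again locally Lipschitz, because $|\phi'|\le1$, so their slopes are finite and the inequality is not vacuous.

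To establish the pointwise estimate I would apply the mean value theorem to the $C^{1}$ function $\phi$: for $y$ near $x$ there is $\xi_{y}$ between $g(x)-f(x)$ and $g(y)-f(y)$ with
\[
\phi(g(y)-f(y))-\phi(g(x)-f(x))=\phi'(\xi_{y})\big[(g(y)-g(x))-(f(y)-f(x))\big].
\]
Writing $a_{y}:=f(y)-f(x)$, $b_{y}:=g(y)-g(x)$ and $s_{y}:=\phi'(\xi_{y})\in[0,1]$, this gives
\[
\tilde{f}(y)-\tilde{f}(x)=(1-s_{y})a_{y}+s_{y}b_{y},\qquad\tilde{g}(y)-\tilde{g}(x)=s_{y}a_{y}+(1-s_{y})b_{y},
\]
so by the triangle inequality
\[
\frac{|\tilde{f}(y)-\tilde{f}(x)|}{d(y,x)}\le(1-s_{y})\frac{|a_{y}|}{d(y,x)}+s_{y}\frac{|b_{y}|}{d(y,x)},
\]
and symmetrically for $\tilde{g}$. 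As $y\to x$, continuity of $f,g$ gives $g(y)-f(y)\to g(x)-f(x)$, hence $\xi_{y}\to g(x)-f(x)$, and continuity of $\phi'$ gives $s_{y}\to t$. Since $\limsup_{y\to x}|a_{y}|/d(y,x)=|Df|(x)<\infty$ and likewise for $b_{y}$, passing to the $\limsup$ in the last display — pulling out the convergent factors $1-s_{y}\to1-t$ and $s_{y}\to t$ and using that $\limsup$ of a sum is at most the sum of the $\limsup$s — yields the two claimed pointwise inequalities.

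The only mildly delicate point is this last passage to the limit: one uses the elementary fact that if $s_{y}\to t\ge0$ and $c_{y}\ge0$ with $\limsup_{y\to x}c_{y}=c<\infty$, then $\limsup_{y\to x}(s_{y}c_{y})\le tc$, which follows from the local boundedness of $c_{y}$ near $x$ (guaranteed by local Lipschitzness of $f$, resp. $g$) together with the convergence of $s_{y}$. Everything else — the mean value theorem step, the triangle inequality, and the final bookkeeping with $\psi$ — is routine, and no metric-measure or curvature hypotheses enter, only the pointwise algebra together with convexity and monotonicity of $\psi$.
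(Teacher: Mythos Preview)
Your argument is correct. Note, however, that the paper does not actually supply its own proof of this lemma: it is stated with a direct citation to \cite[Lemma 2.5]{Ambrosio2013} and used as a black box. Your proof is precisely the standard one given in that reference --- reduce to the pointwise convex-combination estimate on slopes via the mean value theorem for $\phi$, then apply monotonicity followed by convexity of $\psi$ and sum --- so there is nothing to contrast.
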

We say that $g:X\to[0,\infty]$ is an upper gradient of $f:X\to\overline{\mathbb{R}}$
if for any absolutely continuous curve $\gamma:[0,1]\to D(f)$ the
curve $t\mapsto g(\gamma_{s})|\dot{\gamma}_{s}|$ is measurable in
$[0,1]$ (with convention $0\cdot\infty=0$) and 
\[
|f(\gamma_{1})-f(\gamma_{0})|\le\int_{0}^{1}g(\gamma_{t})dt.
\]
It is not difficult to see that the local Lipschitz constant and the
two slopes are upper gradients in case $f$ is (locally) Lipschitz.

\subsection*{Relaxed slope and the Cheeger energy}

In a metric space there is no natural gradient of $L^{r}$-functions
which are not Lipschitz. Cheeger defined in \cite{Cheeger1999} a
gradient via a relaxation procedure using slopes of Lipschitz function.
In \cite{Ambrosio2013,Ambrosio2011} Ambrosio-Gigli-Savar\'e used
a more restrictive version of Cheeger's original definition.
\begin{defn}
[$q$-relaxed slope] A function $g\in L^{q}$ is a $q$-relaxed slope
of $f\in L^{2}$ if there is a sequence of Lipschitz functions $f_{n}$
strongly converging to $f$ in $L^{2}$ such that $|Df_{n}|$ converges
weakly (in $L^{q}$) to some $\tilde{g}\in L^{q}$ with $g\le\tilde{g}$.
We denote by $|\nabla f|_{*,q}$ the element of minimal $L^{q}$-norm
among all $q$-relaxed slopes.\end{defn}
\begin{rem*}
In order to apply the gradient flow theory of Hilbert spaces, we divert
from the approach in \cite{Ambrosio2011} and use approximations of
$f$ in $L^{2}$ instead of $L^{q}$. Note that the proofs of \cite{Ambrosio2011}
also work in this setting if appropriate changes are made.
\end{rem*}
It was shown in \cite{Ambrosio2011} that this definition, Cheeger's
original and two other definitions agree almost everywhere. However,
if the space does not satisfy a local doubling condition and a local
Poincar\'e inequality, then the $q$-relaxed slope might be different
from the $q'$-relaxed slope if $q\ne q'$, see \cite{DiMarino2013}.
Nevertheless, we will drop the dependency on $q$ and just write $|\nabla f|_{*}$.

One can show that the relaxed slope is sublinear, i.e. $|\nabla(f+g)|_{*}\le|\nabla f|_{*}+|\nabla g|_{*}$
almost everywhere, and satisfies a weak form of the chain rule, i.e.
for any $C^{1}$-function $\phi:\mathbb{R}\to\mathbb{R}$, which is
Lipschitz on the image of $f$, we have $|\nabla\phi(f)|_{*}\le|\phi'(f)||\nabla f|_{*}$
with equality if $\phi$ is non-decreasing \cite[Proposition 4.8]{Ambrosio2013}.
This can be easily proven for Lipschitz functions and their slopes,
and follows by a cut-off argument also for functions and their relaxed
slopes. 

Now the $q$-Cheeger energy of the metric measure space $(M,d,\mu)$
is defined as
\[
\operatorname{Ch}_{q}(f)=\frac{1}{q}\int|\nabla f|_{*}^{q}d\mu
\]
for all $f$ admitting a relaxed slopes, otherwise $\operatorname{Ch}_{q}(f)=\infty$.
Similarly, given a convex increasing function $L:[0,\infty)\to[0,\infty)$
with $L(0)=0$, the $L$-Cheeger energy is defined as $\operatorname{Ch}_{L}(f)=\int L(|\nabla f|)d\mu$.
Then the $q$-Cheeger energy is nothing but the $L$-Cheeger energy
for $L(r)=r^{q}/q$.
\begin{prop}
\label{prop:grad-contr}Let $f,g\in D(\operatorname{Ch}_{q})$ and
$\phi:\mathbb{R}\to\mathbb{R}$ be a nondecreasing contraction (with
$\phi(0)=0$ if $\mu(M)=\infty$) then $\mu$-almost everywhere in
$M$ 
\[
|\nabla(f+\phi(g-f))|_{*}^{q}+|\nabla(g-\phi(g-f))|_{*}^{q}\le|\nabla(f)|_{*}^{q}+|\nabla(g)|_{*}^{q}.
\]
\end{prop}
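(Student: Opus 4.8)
The plan is to reduce Proposition \ref{prop:grad-contr} to the pointwise convexity inequality of Lemma \ref{lem:conv-diff} by an approximation argument. The natural choice is $\psi(r)=r^{q}/q$, which is convex and nondecreasing on $[0,\infty)$ for $q\ge 1$; then for Lipschitz $f,g$ and a $C^1$ contraction $\phi$ with $0\le\phi'\le1$, Lemma \ref{lem:conv-diff} gives exactly
\[
|D\tilde f|^{q}+|D\tilde g|^{q}\le|Df|^{q}+|Dg|^{q}
\]
pointwise, with $\tilde f=f+\phi(g-f)$, $\tilde g=g-\phi(g-f)$. Integrating against $\mu$ yields the desired inequality for smooth contractions and Lipschitz functions. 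The work is then (i) to pass from Lipschitz $f,g$ to general $f,g\in D(\operatorname{Ch}_q)$, replacing $|D\cdot|$ by $|\nabla\cdot|_{*}$, and (ii) to pass from $C^1$ contractions to a general nondecreasing contraction $\phi$.

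For step (i), I would take sequences of Lipschitz functions $f_n\to f$, $g_n\to g$ in $L^2$ with $|Df_n|\rightharpoonup h_f$, $|Dg_n|\rightharpoonup h_g$ weakly in $L^q$, where (by the definition of relaxed slope and a diagonal/Mazur argument, exactly as in \cite{Ambrosio2013}) one may arrange $h_f=|\nabla f|_{*}$ and $h_g=|\nabla g|_{*}$. Since $\phi$ is Lipschitz, $\tilde f_n=f_n+\phi(g_n-f_n)\to\tilde f=f+\phi(g-f)$ and $\tilde g_n\to\tilde g$ in $L^2$. The slopes $|D\tilde f_n|$ and $|D\tilde g_n|$ are bounded in $L^q$ because of the Lipschitz-level inequality above together with the $L^q$-boundedness of $|Df_n|,|Dg_n|$; hence along a subsequence they converge weakly in $L^q$ to some $\tilde h_f,\tilde h_g$, which are therefore relaxed slopes of $\tilde f,\tilde g$, so $|\nabla\tilde f|_{*}\le\tilde h_f$ and $|\nabla\tilde g|_{*}\le\tilde h_g$ a.e. Now the key point is that $r\mapsto r^q$ is convex, so $u\mapsto\int u^q\,d\mu$ is weakly lower semicontinuous on $L^q$; applying this to the weak convergences gives
\[
\int\Bigl(|\nabla\tilde f|_{*}^{q}+|\nabla\tilde g|_{*}^{q}\Bigr)d\mu\le\liminf_n\int\Bigl(|D\tilde f_n|^{q}+|D\tilde g_n|^{q}\Bigr)d\mu\le\liminf_n\int\Bigl(|Df_n|^{q}+|Dg_n|^{q}\Bigr)d\mu=\int\Bigl(|\nabla f|_{*}^{q}+|\nabla g|_{*}^{q}\Bigr)d\mu.
\]
This proves the \emph{integrated} inequality. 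To upgrade to the pointwise (a.e.) statement, I would apply the integrated inequality on arbitrary measurable sets, or more cleanly use the locality of the relaxed slope: restricting the construction to a Borel set $A$ and testing with indicator-localized estimates (as is standard for relaxed slopes, cf.\ \cite[Proposition 4.8]{Ambrosio2013}) forces the inequality $|\nabla\tilde f|_{*}^{q}+|\nabla\tilde g|_{*}^{q}\le|\nabla f|_{*}^{q}+|\nabla g|_{*}^{q}$ to hold $\mu$-a.e., since an integrated inequality valid on every set upgrades to a pointwise one.

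For step (ii), given a general nondecreasing contraction $\phi$ (with $\phi(0)=0$ when $\mu(M)=\infty$, which is needed so that $\phi(g-f)\in L^2$), I would approximate it by $C^1$ contractions $\phi_\varepsilon$ with $0\le\phi_\varepsilon'\le1$, $\phi_\varepsilon(0)=0$, and $\phi_\varepsilon\to\phi$ locally uniformly — e.g.\ mollification preserves the $1$-Lipschitz and monotonicity properties. Then $\phi_\varepsilon(g-f)\to\phi(g-f)$ in $L^2$, so $f+\phi_\varepsilon(g-f)\to f+\phi(g-f)$ and likewise for the other term, and the same weak-compactness-plus-lower-semicontinuity argument as in step (i) passes the inequality to the limit. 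The main obstacle I anticipate is the bookkeeping in step (i): correctly arranging that the weak $L^q$ limits of $|Df_n|$ and $|Dg_n|$ are \emph{simultaneously} the minimal relaxed slopes (which requires a Mazur-lemma convexification to replace weak limits by strongly convergent convex combinations without destroying the slope bounds), and then making the localization precise enough to get the a.e.\ statement rather than just the integrated one. Both are routine in the Ambrosio–Gigli–Savar\'e framework but are the only places where care is genuinely required; the convexity input itself is entirely supplied by Lemma \ref{lem:conv-diff}.
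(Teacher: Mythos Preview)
Your proposal is correct and is precisely the argument the paper has in mind: it simply cites \cite[Proposition 4.8]{Ambrosio2013} together with Lemma~\ref{lem:conv-diff}, and your outline (apply Lemma~\ref{lem:conv-diff} with $\psi(r)=r^q$ to Lipschitz approximants, pass to relaxed slopes via optimal sequences and weak lower semicontinuity, mollify $\phi$, then localize over Borel sets to upgrade to the a.e.\ inequality) is exactly how that proof runs. Your identification of the two delicate points---simultaneous strong $L^q$-convergence of $|Df_n|,|Dg_n|$ to the minimal slopes, and the localization step---is accurate, and both are handled as you describe.
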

\begin{proof}
The proof follows along the lines of the proof of \cite[Proposition 4.8]{Ambrosio2013}
using Lemma \ref{lem:conv-diff} (see \cite[Lemma 2.5]{Ambrosio2013}).
\end{proof}

\subsection*{Fisher information}

The Fisher information is the derivative of the entropy functional
along the heat flow. The Kuwada lemma, a key tool of \cite{Ambrosio2013}
to identify the heat flow and the gradient flow of the entropy functional,
shows that the square of the metric derivative in the $2$-Wasserstein
space along the heat flow is bounded from above by the Fisher information.
In a different paper \cite{Ambrosio2011} they showed that in the
compact setting with density of the measure bounded from below and
above, there is also a version of this along the $q$-heat flow in
the $p$-Wasserstein space (see Lemma \ref{lem:Kuwada} for a precise
version). For that reason we define the following $q$-Fisher information
as follows.
\begin{defn}
[$q$-Fisher information] Let $q\in(\frac{1+\sqrt{5}}{2},\infty)$.
For a Borel function $f:M\to[0,\infty]$ we define the $q$-Fisher
information $\mathsf{F}_{q}(f)$ as 
\[
\mathsf{F}_{q}(f):=r^{-q}\int|\nabla f^{r}|_{*}^{q}d\mu=qr^{-q}\operatorname{Ch}_{q}(f^{r})
\]
where $q\ne\frac{1+\sqrt{5}}{2}$ and 
\[
r=1-\frac{p-1}{q}=1-\frac{(p-1)^{2}}{p}.
\]

In case $q=\frac{1+\sqrt{5}}{2}$, note $q=p-1$ and thus we define
\[
\mathsf{F}_{q}(f)=\int|\nabla\log f|_{w}^{q}d\mu=q\operatorname{Ch}_{q}(\log f).
\]
\end{defn}
\begin{rem*}
For $q\in(\frac{1+\sqrt{5}}{2},\infty)$, we also have $r\in(0,1)$,
which will be our main interest for technical reasons. Nevertheless,
all case $q\ge2$ are covered. In the following, we will just write
$r>0$. Furthermore, notice that $N\ge2$ and $1-\frac{1}{N}=3-p$
implies $p=2+\frac{1}{N}\le2.5<\frac{3+\sqrt{5}}{2}$. Thus only the
cases $N\in(1,2)$ remain to be covered. In the smooth setting $CD_{p}(K,N)$
with $N\in(1,2)$ can only hold for $1$-dimensional spaces.\end{rem*}
\begin{prop}
\label{prop:Fisher}Let $r>0$. Then for every Borel function $f:M\to[0,\infty]$
we have the equivalence
\[
f\in D(\mathsf{F}_{q})\;\Longleftrightarrow\; f\in L^{2r}(M,\mu)\:\mbox{ and }\:\int_{\{f>0\}}\frac{|\nabla f|_{*}^{q}}{f^{p-1}}d\mu<\infty
\]
and in this case we have 
\[
\mathsf{F}_{q}(f)=\int_{\{f>0\}}\frac{|\nabla f|_{*}^{q}}{f^{p-1}}d\mu.
\]
In addition, the functional is sequentially lower semicontinuous w.r.t.
the strong convergence in $L^{2r}(M,\mu)$ and $L^{2}(M,\mu)$. If
$p<2$ then the functional is also convex.\end{prop}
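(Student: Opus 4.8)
The plan is to read everything off the definition $\mathsf{F}_q(f)=\frac{q}{r^q}\operatorname{Ch}_q(f^r)$ by transporting the relaxed slope through the nondecreasing map $\phi(t)=t^r$ (recall that in the range of $q$ of interest $r\in(0,1)$). Unfolding the definition, $f\in D(\mathsf{F}_q)$ means exactly $f^r\in D(\operatorname{Ch}_q)$; since any function admitting a $q$-relaxed slope lies in $L^2(M,\mu)$ by definition, this is the conjunction of $f^r\in L^2(M,\mu)$, i.e.\ $f\in L^{2r}(M,\mu)$ --- in particular $f<\infty$ $\mu$-a.e., so the integral appearing in the statement is well posed --- and the existence of a $q$-relaxed slope for $f^r$. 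The crux is then the pointwise identity
\[ |\nabla f^r|_*=r\,f^{r-1}\,|\nabla f|_*\ \ \mu\text{-a.e. on }\{f>0\},\qquad |\nabla f^r|_*=0\ \ \mu\text{-a.e. on }\{f=0\}. \]
Granting it, raising to the power $q$ and using the arithmetic identity $(r-1)q=-(p-1)$, immediate from $r=1-\frac{p-1}{q}$, gives $|\nabla f^r|_*^q=r^q f^{-(p-1)}|\nabla f|_*^q$ $\mu$-a.e.\ on $\{f>0\}$, whence
\[ \mathsf{F}_q(f)=r^{-q}\int_M|\nabla f^r|_*^q\,d\mu=r^{-q}\int_{\{f>0\}}|\nabla f^r|_*^q\,d\mu=\int_{\{f>0\}}\frac{|\nabla f|_*^q}{f^{p-1}}\,d\mu; \]
finiteness of the last integral is precisely the condition for $f^r$ to admit a $q$-relaxed slope, so this yields at once both the equivalence and the value formula.

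The vanishing on $\{f=0\}$ is the locality of the relaxed slope ($|\nabla g|_*=0$ $\mu$-a.e.\ on $\{g=c\}$, applied with $g=f^r$ and $c=0$). The first relation is the chain rule of \cite[Proposition 4.8]{Ambrosio2013}, which holds with equality for nondecreasing $\phi$; the difficulty is that $\phi(t)=t^r$ with $r\in(0,1)$, while $C^1$ on $(0,\infty)$ and Lipschitz on $[\varepsilon,\infty)$ for every $\varepsilon>0$ (because $r<1$), degenerates at the origin, so the chain rule cannot be quoted verbatim. I would therefore apply it to $\phi$ composed with a truncation of $f$ bounded away from $0$, identify the relaxed slope of the truncated objects, and pass to the limit using locality together with the $L^2(M,\mu)$-lower semicontinuity of $\operatorname{Ch}_q$; this same argument shows that $f^r$ genuinely admits a relaxed slope exactly when $r f^{r-1}|\nabla f|_*\mathbf{1}_{\{f>0\}}\in L^q(M,\mu)$. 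Throughout, $|\nabla f|_*$ has to be interpreted via the truncations $\min(f,k)$, since $f\in L^{2r}(M,\mu)$ need not give $f\in L^2(M,\mu)$. I regard this truncation/approximation step as the main obstacle; everything else is bookkeeping with exponents.

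For lower semicontinuity, since $\operatorname{Ch}_q$ is sequentially lower semicontinuous for strong $L^2(M,\mu)$-convergence and $\mathsf{F}_q(f)=\frac{q}{r^q}\operatorname{Ch}_q(f^r)$, it suffices that $f\mapsto f^r$ be continuous from $L^{2r}(M,\mu)$ into $L^2(M,\mu)$, which follows from the elementary inequality $|a^r-b^r|\le|a-b|^r$ ($a,b\ge0$, $r\in(0,1]$) via $\int|f_n^r-f^r|^2\,d\mu\le\int|f_n-f|^{2r}\,d\mu$. For strong $L^2(M,\mu)$-convergence one argues similarly: passing to a $\mu$-a.e.\ convergent subsequence, H\"older's inequality on sets of finite measure gives $f_n^r\to f^r$ in $L^2_{\mathrm{loc}}$, which is enough to pass to the $\liminf$ in the relaxation defining $\operatorname{Ch}_q$ (with $\int f^{2r}\,d\mu<\infty$ obtained by Fatou from a uniform $L^{2r}$-bound, automatic e.g.\ if $\mu(M)<\infty$).

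Convexity for $p<2$ I would obtain directly from the value formula. For $a>0$, $b\ge0$ set $h(a,b):=b^q a^{-(p-1)}$, with $h(0,0):=0$; a short computation gives $h_{aa}\ge0$ and $h_{aa}h_{bb}-h_{ab}^2=(p-1)\,q\,(q-p)\,a^{-2(p-1)-2}b^{2q-2}$, so $h$ is jointly convex precisely when $q\ge p$, that is (since $q=\frac{p}{p-1}$) when $p\le2$; moreover $h$ is nondecreasing in $b$. Given $f_0,f_1\in D(\mathsf{F}_q)$, $\lambda\in[0,1]$ and $f=\lambda f_0+(1-\lambda)f_1\in L^{2r}(M,\mu)$, sublinearity of the relaxed slope gives $|\nabla f|_*\le\lambda|\nabla f_0|_*+(1-\lambda)|\nabla f_1|_*$ $\mu$-a.e., and by locality we may take $|\nabla f_i|_*$ to vanish on $\{f_i=0\}$, so that $\mu$-a.e.
\[ h\bigl(f,|\nabla f|_*\bigr)\le h\bigl(\lambda(f_0,|\nabla f_0|_*)+(1-\lambda)(f_1,|\nabla f_1|_*)\bigr)\le\lambda\,h\bigl(f_0,|\nabla f_0|_*\bigr)+(1-\lambda)\,h\bigl(f_1,|\nabla f_1|_*\bigr), \]
using monotonicity in $b$ for the first inequality and joint convexity for the second. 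Integrating over $\{f>0\}$ and applying the formula already established yields $\mathsf{F}_q(f)\le\lambda\,\mathsf{F}_q(f_0)+(1-\lambda)\,\mathsf{F}_q(f_1)$.
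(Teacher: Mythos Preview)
Your proposal is correct and follows the same approach as the paper: both apply the chain rule to $t\mapsto t^r$ with an approximation to cure the degeneracy at $0$ (you truncate $f$ from below, the paper regularizes via $\phi_\epsilon(t)=(t+\epsilon)^r-\epsilon^r$ and lets $\epsilon\downarrow0$), and both deduce convexity for $p<2$ from the joint convexity of $(a,b)\mapsto b^q/a^{p-1}$ when $q\ge p$ (you verify the Hessian directly, the paper cites \cite{Borwein1997}). You also spell out the lower-semicontinuity argument, which the paper's proof leaves implicit.
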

\begin{rem*}
Compare this to \cite[Remark 6.2]{Ambrosio2011} and \cite[Lemma 4.10]{Ambrosio2013}.
And note that the statement $|\nabla f|_{w}\in L^{1}$ follows already
from $f\in L^{1}$ and $\int\frac{|\nabla f|_{w}^{2}}{f}d\mu<\infty$
by applying the reverse Hölder inequality. \end{rem*}
\begin{proof}
Similar to \cite[Lemma 4.10]{Ambrosio2013} first assume $f$ is bounded.
Then note that $f\in D(\mathsf{F}_{q})$ requires $f^{r}\in L^{2}(M,\mu)$,
i.e. $f\in L^{2r}(M,\mu)$ and by chain rule 
\[
|\nabla f^{r}|_{*}^{q}=r^{q}\frac{|\nabla f|_{*}^{q}}{f^{p-1}}.
\]

Conversely, just use $\phi(r)=\sqrt{r+\epsilon}-\sqrt{\epsilon}$,
apply the chain rule and let $\epsilon\to0$. 

Convexity for $p<2$ follows from \cite{Borwein1997}: Since in that
case $q\ge p$, we know $(x,y)\mapsto x^{q}/y^{p-1}$ is convex in
$\mathbb{R}^{2}$.

\end{proof}

\subsection*{Absolutely continuous curves and geodesics}

If $I\subset\mathbb{R}$ is an open interval then we say that a curve
$\gamma:I\to X$ is in $AC^{p}(I,X)$ (we drop the metric $d$ for
simplicity) for some $p\in[1,\infty]$ if 
\[
d(\gamma_{s},\gamma_{t})\le\int_{s}^{t}g(r)dr\quad\forall s,t\in J:s<t
\]
for some $g\in L^{p}(J)$. In case $p=1$ we just say that $\gamma$
is absolutely continuous. It can be shown \cite[Theorem 1.1.2]{AmbGigSav2008}
that in this case the metric derivative 
\[
|\dot{\gamma}_{t}|:=\limsup_{s\to t}\frac{d(\gamma_{s},\gamma_{t})}{|s-t|}
\]
with $\lim$ for a.e. $t\in I$ is a minimal representative of such
a $g$. We will say $\gamma$ has constant (unit) speed if $|\dot{\gamma}_{t}|$
is constant (resp. $1$) almost everywhere in $I$.

It is not difficult to see that $AC^{p}(I,X)\subset C(\bar{I},X)$
where $C(\bar{I},X)$ is equipped with the $\sup$ distance $d^{*}$
\[
d^{*}(\gamma,\gamma'):=\sup_{t\in\bar{I}}d(\gamma_{t},\gamma_{t}').
\]
For each $t\in\bar{I}$ we can define the evaluation map $e_{t}:C(\bar{I},X)\to X$
by 
\[
e_{t}(\gamma)=\gamma_{t}.
\]

We will say that $(X,d)$ is a geodesic space if for each $x_{0},x_{1}\in X$
where is a constant speed curve $\gamma:[0,1]\to X$ with $\gamma_{i}=x_{i}$
and 
\[
d(\gamma_{s},\gamma_{t})=|t-s|d(\gamma_{0},\gamma_{1}).
\]
In this case, we say that $\gamma$ is a constant speed geodesic.
The space of all constant speed geodesics $\gamma:[0,1]\to X$ will
be donated by $\operatorname{Geo}(X)$. Using the triangle inequality
it is not difficult to show the following.
\begin{lem}
Assume $\gamma:[0,1]\to X$ is a curve such that 
\[
d(\gamma_{s},\gamma_{t})\le|t-s|d(\gamma_{0},\gamma_{1})
\]
then $\gamma$ is a geodesic from $\gamma_{0}$ to $\gamma_{1}$.
\end{lem}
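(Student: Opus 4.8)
The statement to prove is: if $\gamma\colon[0,1]\to X$ satisfies $d(\gamma_s,\gamma_t)\le|t-s|\,d(\gamma_0,\gamma_1)$ for all $s,t$, then $\gamma$ is a constant speed geodesic from $\gamma_0$ to $\gamma_1$, i.e.\ equality holds: $d(\gamma_s,\gamma_t)=|t-s|\,d(\gamma_0,\gamma_1)$.

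The plan is to exploit the triangle inequality to force the reverse inequality on the whole curve. Fix $s<t$ in $[0,1]$ and write $L:=d(\gamma_0,\gamma_1)$. By the triangle inequality applied along the three points $\gamma_0,\gamma_s,\gamma_1$, and then again to split the middle piece is not even needed; one simply writes
\[
L=d(\gamma_0,\gamma_1)\le d(\gamma_0,\gamma_s)+d(\gamma_s,\gamma_t)+d(\gamma_t,\gamma_1).
\]
Now apply the hypothesis to each of the three terms on the right: $d(\gamma_0,\gamma_s)\le sL$, $d(\gamma_s,\gamma_t)\le(t-s)L$, and $d(\gamma_t,\gamma_1)\le(1-t)L$. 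Adding these gives an upper bound of $sL+(t-s)L+(1-t)L=L$, so the chain of inequalities
\[
L\le d(\gamma_0,\gamma_s)+d(\gamma_s,\gamma_t)+d(\gamma_t,\gamma_1)\le L
\]
must be a chain of equalities. In particular each of the three intermediate estimates is sharp; from $d(\gamma_s,\gamma_t)\le(t-s)L$ being forced to equality together with the total summing to exactly $L$, one deduces $d(\gamma_s,\gamma_t)=(t-s)L$ (indeed if any one of the three were strict the sum would be $<L$, a contradiction). Since $s<t$ were arbitrary, $\gamma$ satisfies $d(\gamma_s,\gamma_t)=|t-s|\,d(\gamma_0,\gamma_1)$ for all $s,t$, which is exactly the definition of a constant speed geodesic from $\gamma_0$ to $\gamma_1$; in particular $|\dot\gamma_t|\equiv L$ a.e., so $\gamma\in\operatorname{Geo}(X)$ up to the trivial rescaling if $L=0$.

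There is essentially no obstacle here: the only thing to be slightly careful about is the degenerate case $\gamma_0=\gamma_1$ (where $L=0$ forces $\gamma$ to be constant, which is a geodesic), and the bookkeeping that "a sum of three nonnegative quantities each bounded by $a_i$ equals $\sum a_i$" implies each equals its bound — a one-line argument. The proof is purely metric and uses only the triangle inequality, as the paper already indicates.
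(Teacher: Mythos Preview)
Your proof is correct and follows exactly the approach the paper indicates: the paper does not spell out a proof but simply remarks that the result follows from the triangle inequality, which is precisely what you do by sandwiching $L$ between itself via the chain $L\le d(\gamma_0,\gamma_s)+d(\gamma_s,\gamma_t)+d(\gamma_t,\gamma_1)\le sL+(t-s)L+(1-t)L=L$.
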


A weaker concept is a length space: In such spaces the distance between
point $x_{0}$ and $x_{1}\in X$ is given by 
\[
d(x_{0},x_{1})=\inf\int_{0}^{1}|\dot{\gamma}_{t}|dt
\]
where the infimum is taken over all absolutely continuous curves connecting
$x_{0}$ and $x_{1}$. In case $X$ is complete and locally compact,
the two concepts agree. Furthermore, Arzela-Ascoli also implies:
\begin{lem}
If $(X,d)$ is locally compact then so is $(\operatorname{Geo}(X),d^{*})$
where $d^{*}$ is the $\sup$-distance on $C(\bar{I},X)$.
\end{lem}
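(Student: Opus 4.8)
The plan is, given a fixed $\sigma\in\operatorname{Geo}(X)$, to exhibit a compact neighbourhood of $\sigma$ in $(\operatorname{Geo}(X),d^{*})$ by applying the Arzel\`a--Ascoli theorem to a small closed $d^{*}$-ball around $\sigma$, the two ingredients being (i) local compactness of $X$, which confines the nearby geodesics to a fixed compact subset of $X$, and (ii) the defining equality of a constant speed geodesic, which forces the nearby geodesics to be uniformly Lipschitz.

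First I would fix the target set. Since $\sigma([0,1])$ is compact and $X$ is locally compact, one can choose $\varepsilon>0$ so small that the closed $\varepsilon$-neighbourhood
\[
K:=\{x\in X\,:\, d(x,\sigma([0,1]))\le\varepsilon\}
\]
is compact: cover $\sigma([0,1])$ by finitely many balls $B(x_{i},r_{i}/2)$ with $\overline{B(x_{i},r_{i})}$ compact and take $\varepsilon:=\min_{i}r_{i}/2$, so that $K\subset\bigcup_{i}\overline{B(x_{i},r_{i})}$. If $\gamma\in\operatorname{Geo}(X)$ satisfies $d^{*}(\gamma,\sigma)\le\varepsilon$, then $d(\gamma_{t},\sigma_{t})\le\varepsilon$ for every $t\in[0,1]$, hence $\gamma_{t}\in K$ for all $t$. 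Moreover, evaluating $d^{*}(\gamma,\sigma)\le\varepsilon$ at the endpoints and using the triangle inequality gives $d(\gamma_{0},\gamma_{1})\le d(\sigma_{0},\sigma_{1})+2\varepsilon=:\Lambda$, so by the geodesic equality $d(\gamma_{s},\gamma_{t})=|s-t|\,d(\gamma_{0},\gamma_{1})\le\Lambda|s-t|$. Thus the closed ball $\bar B:=\{\gamma\in\operatorname{Geo}(X)\,:\,d^{*}(\gamma,\sigma)\le\varepsilon\}$ consists of curves into the compact metric space $K$ that are uniformly $\Lambda$-Lipschitz, in particular equicontinuous with pointwise relatively compact orbits; by Arzel\`a--Ascoli it is relatively compact in $(C([0,1],X),d^{*})$.

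It remains to see that $\operatorname{Geo}(X)$ is closed in $(C([0,1],X),d^{*})$, so that $\bar B$, being closed in $C([0,1],X)$ and contained in a compact set, is itself compact. If $\gamma^{n}\to\gamma$ uniformly with each $\gamma^{n}$ a constant speed geodesic, then passing to the limit in $d(\gamma^{n}_{s},\gamma^{n}_{t})=|s-t|\,d(\gamma^{n}_{0},\gamma^{n}_{1})$ and using continuity of $d$ yields $d(\gamma_{s},\gamma_{t})=|s-t|\,d(\gamma_{0},\gamma_{1})$ for all $s,t$, so $\gamma\in\operatorname{Geo}(X)$ (indeed $\gamma$ already satisfies the hypothesis of the lemma preceding this statement). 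Since $\bar B$ contains the $d^{*}$-open ball of radius $\varepsilon$ about $\sigma$ in $\operatorname{Geo}(X)$, it is the desired compact neighbourhood, and $(\operatorname{Geo}(X),d^{*})$ is locally compact. The only genuinely substantive point is the joint use of local compactness of $X$ (to bound the images) together with the a priori length bound $\Lambda$ (to bound the moduli of continuity) needed to make Arzel\`a--Ascoli applicable; everything else is routine.
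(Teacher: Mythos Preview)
Your proof is correct and follows exactly the route the paper indicates: the paper does not give a detailed argument but simply states the lemma as a consequence of Arzel\`a--Ascoli, and your proposal spells out precisely that application (uniform Lipschitz bound from the geodesic equality, confinement to a compact set from local compactness, and closedness of $\operatorname{Geo}(X)$ under uniform limits).
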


\subsection*{Geodesically convex functionals and gradient flows}

A functional $E:X\to\mathbb{R}\cup\{+\infty\}$ is said to be $K$-geodesically
convex for some $K\in\mathbb{R}$ if for each $x_{0},x_{1}\in D(E)$
there is a geodesic $\gamma\in\operatorname{Geo}(X)$ connecting $x_{0}$
and $x_{1}$ such that 
\[
E(\gamma_{t})\le(1-t)E(\gamma_{0})+tE(\gamma_{1})-\frac{K}{2}(1-t)td^{2}(\gamma_{0},\gamma_{1}).
\]
In such a case it can be shown (\cite[Section 2.4]{AmbGigSav2008}
that the descending slope is an upper gradient of $E$ and can be
express as 
\[
|D^{-}E|(x)=\sup_{y\in X\backslash\{x\}}\left(\frac{E(x)-E(y)}{d(x,y)}+\frac{K}{2}d(x,y)\right)
\]
In particular, it is lower semicontinuous if $E$ is. Furthermore,
if $x:[0,\infty)\to D(E)$ is a locally absolutely continuous curve
then 
\[
E(x_{t})\ge E(x_{s})-\int_{s}^{t}|\dot{x}_{r}||D^{-}E|(y_{r})dr
\]
for every $s,t\in[0,\infty)$ and $s<t$. Note by Young's inequality
we also have for any $p\in(1,\infty)$
\[
E(x_{t})\ge E(x_{0})-\frac{1}{p}\int_{0}^{t}|\dot{x}_{t}|^{p}dt-\frac{1}{q}\int_{0}^{t}|D^{-}E|^{q}(x_{r})dr.
\]

\begin{defn}
[$(E,p)$-dissipation inequality and metric gradient flows] Let $E:X\to\mathbb{R}\cup\{\infty\}$
be a functional on $X$ then we say that a locally absolutely continuous
curve $t\mapsto y_{t}\in D(E)$ satisfies the $(E,p)$-dissipation
inequality if for all $t\ge0$ 
\[
E(x_{0})\ge E(x_{t})+\frac{1}{p}\int_{0}^{t}|\dot{x}_{t}|^{p}dt+\frac{1}{q}\int_{0}^{t}|D^{-}E|^{q}(x_{r})dr.
\]
$t\mapsto x_{t}$ is a gradient flow of $E$ starting at $y_{0}\in D(E)$
if 
\[
E(y_{0})=E(x_{t})+\frac{1}{p}\int_{0}^{t}|\dot{x}_{t}|^{p}dt+\frac{1}{q}\int_{0}^{t}|D^{-}E|^{q}(x_{r})dr.
\]

\end{defn}
In the geodesically convex case we immediately see that if $t\mapsto x_{t}$
satisfies the $(E,p)$-dissipation inequality then it is a (generalized)
gradient flow and 
\[
\frac{d}{dt}E(x_{t})=-|\dot{x}_{t}|^{p}=-|D^{-}E|^{q}(x_{t})
\]
for almost all $t\in(0,1)$.
\begin{rem*}
The theory developed in \cite{AmbGigSav2008} covers mainly the case
$p=2$ and only mentioned the required adjustments. For a comprehensive
treatment of the case $p\ne2$ and even more general situations see
\cite{Rossi2008}.
\end{rem*}

\subsection*{Wasserstein spaces}

In this section, we will give a short introduction to the Wasserstein
space $\mathcal{P}_{p}(M)$; for an overview of its general properties
see \cite[Chapter 6]{Villani2009}. 

Fix some $x_{0}\in M$ and let $\mathcal{P}(M)$ be the set of probability
measures on $M$. Denote by $\mathcal{P}_{p}(M)$ the following set
\[
\left\{\mu\in\mathcal{P}(M)\,|\,\int d^{p}(x,x_{0})d\mu(x)\right\}.
\]
It can be shown that the following object $w_{p}(\cdot,\cdot)$ defines
a complete metric on $\mathcal{P}_{p}(M)$.
\[
w_{p}^{p}(\mu_{0},\mu_{1})=\inf_{\pi\in\Pi(\mu_{0},\mu_{1})}\frac{1}{p}\int d^{p}(x,y)d\pi(x,y)
\]
 where $\Pi(\mu_{0},\mu_{1})$ is the set of all $\pi\in\mathcal{P}(M\times M)$
with $(p_{1})_{*}\pi=\mu_{0}$ and $(p_{2})_{*}\pi=\mu_{1}$ with
$p_{i}$ be the projections to the $i$-the coordinate. We will say
that $(\mathcal{P}_{p}(M),w_{p})$ is the $p$-Wasserstein space (modeled
on $(M,d)$).

Furthermore, it is well-known that $\mathcal{P}_{p}(M)$ is a geodesic/length
space if $M$ is. However, it is compact if and only if $M$ is. In
that case it agrees with the space of probability measures and the
topology induced by $w_{p}$ agrees with the weak topology on $\mathcal{P}(M)$.
Nevertheless, we have the following nice property:
\begin{lem}
[{\cite[Theorem 6]{Kell2011}}]\label{lem:weak-cpt-wasserstein}Let
$(M,d)$ be a proper metric space, then every bounded set in $\mathcal{P}_{p}(M)$
is precompact w.r.t. to the weak topology induced by $\mathcal{P}_{p}(M)\subset\mathcal{P}(M)$.\end{lem}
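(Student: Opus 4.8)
The plan is to reduce the assertion to Prokhorov's theorem by showing that $w_{p}$-boundedness of a family $\mathcal{K}\subset\mathcal{P}_{p}(M)$ forces it to be tight, with properness providing exactly the mechanism that turns a uniform moment bound into tightness. Fix the reference point $x_{0}\in M$ used in the definition of $\mathcal{P}_{p}(M)$. Since $w_{p}^{p}(\mu,\delta_{x_{0}})=\frac{1}{p}\int d^{p}(x,x_{0})\,d\mu(x)$ and, by the triangle inequality for $w_{p}$, boundedness with respect to any reference in $\mathcal{P}_{p}(M)$ is equivalent to boundedness with respect to $\delta_{x_{0}}$, a bounded $\mathcal{K}$ is the same as a family with a uniform $p$-th moment bound
\[
M_{0}:=\sup_{\mu\in\mathcal{K}}\int d^{p}(x,x_{0})\,d\mu(x)<\infty.
\]

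First I would establish tightness. By the Markov inequality, for every $R>0$ and every $\mu\in\mathcal{K}$,
\[
\mu\bigl(\{x:d(x,x_{0})>R\}\bigr)\le\frac{1}{R^{p}}\int d^{p}(x,x_{0})\,d\mu(x)\le\frac{M_{0}}{R^{p}}.
\]
Because $(M,d)$ is proper, the closed ball $\bar{B}_{R}(x_{0})=\{x:d(x,x_{0})\le R\}$ is compact for every $R$. Hence, given $\varepsilon>0$, choosing $R$ so that $M_{0}R^{-p}<\varepsilon$ produces a single compact set $K=\bar{B}_{R}(x_{0})$ with $\mu(M\setminus K)<\varepsilon$ for all $\mu\in\mathcal{K}$ simultaneously, which is precisely tightness of $\mathcal{K}$.

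Next I would invoke Prokhorov. Properness gives $M=\bigcup_{n}\bar{B}_{n}(x_{0})$, so $M$ is $\sigma$-compact, hence separable, and complete; thus $M$ is Polish and the weak topology on $\mathcal{P}(M)$ is metrizable, so precompactness may be checked sequentially. Prokhorov's theorem then yields that the tight family $\mathcal{K}$ is relatively compact in $\mathcal{P}(M)$ for the weak topology: any sequence $(\mu_{n})\subset\mathcal{K}$ has a subsequence converging weakly to some $\mu\in\mathcal{P}(M)$, and tightness guarantees that $\mu$ is again a probability measure. To see that the limit remains in $\mathcal{P}_{p}(M)$, I would apply weak convergence to the bounded continuous truncations $d^{p}(\cdot,x_{0})\wedge k$ and then let $k\to\infty$ by monotone convergence, obtaining $\int d^{p}(x,x_{0})\,d\mu\le\liminf_{n}\int d^{p}(x,x_{0})\,d\mu_{n}\le M_{0}$. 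This shows $\mathcal{K}$ is precompact for the weak topology restricted to $\mathcal{P}_{p}(M)$.

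The argument is largely routine once tightness is secured, so I do not anticipate a genuine obstacle; the one place where the hypothesis is essential is the passage from the uniform moment bound to a single compact set carrying almost all the mass, and this is exactly where properness (compactness of closed balls) cannot be dispensed with. Without it the sublevel sets $\{d(\cdot,x_{0})\le R\}$ need not be compact and the Markov estimate would fail to deliver tightness. The only subtleties to watch are that the weak limit retains unit mass, which is handled by Prokhorov through tightness, and that it retains finite $p$-th moment, which is handled by the lower-semicontinuity truncation step above.
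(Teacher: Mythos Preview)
Your proof is correct and follows essentially the same route as the paper: establish tightness of a $w_{p}$-bounded family via the Markov-type moment estimate together with properness (compactness of closed balls), then conclude by Prokhorov. The only cosmetic difference is that the paper cites lower semicontinuity of $w_{p}(\delta_{x_{0}},\cdot)$ from \cite[Lemma 4.3]{Villani2009} to see that the weak limit stays in $\mathcal{P}_{p}(M)$, whereas you spell this out via the truncation argument.
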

\begin{proof}
Let $x_{0}$ be some fixed measure in $M$. By \cite[Lemma 4.3]{Villani2009}
we know that the $w_{p}(\delta_{x_{0}},\cdot)$ is lower semicontinuous
w.r.t. to the weak convergence of measures. Thus we only need to prove
tightness of every $w_{p}$-ball $B_{R}^{p}(\delta_{x_{0}})\subset\mathcal{P}_{p}(M)$,
i.e. for every $\epsilon>0$ there is a compact set $K_{\epsilon}\subset M$
such that every $\mu\in B_{R}^{p}(\delta_{x_{0}})$
\[
\mu(M\backslash K_{\epsilon})\le\epsilon.
\]

If we set $K_{\epsilon}=B_{\frac{1}{\epsilon}}(x_{0})$ then 
\begin{eqnarray*}
\mu(M\backslash K_{\epsilon}) & \le & \epsilon^{p}\int_{d(x,x_{0})\ge\frac{1}{\epsilon}}d^{p}(x,x_{0})d\mu(x)\\
 & \le & \epsilon^{p}pw_{p}^{p}(\delta_{x_{0}},\mu)\le\epsilon pR^{p}
\end{eqnarray*}
which implies tightness since any ball in $M$ is compact.
\end{proof}
We say that a function $E:\mathcal{P}_{p}(M)\to\mathbb{R}\cup\{\infty\}$
is weakly lower semi-continuous, if it is lower semicontinuous w.r.t.
the weak topology on $\mathcal{P}_{p}(M)\subset\mathcal{P}(M)$. In
particular, the weak closure of bounded subset of sublevels of $E$
are contained in that sublevel.
\begin{thm}
Let $(M,d)$ be a proper geodesic metric space and $E$ be a functional
on $\mathcal{P}_{p}(M)$ such that $E$ and $|D^{-}E|$ are weakly
lower semicontinuous. Then for all $\mu_{0}\in D(E)$ there exists
a gradient flow $t\mapsto\mu_{t}$ of $E$ starting at $\mu_{0}$.\end{thm}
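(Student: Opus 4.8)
The plan is to run De Giorgi's minimizing movement (JKO) scheme in $(\mathcal{P}_p(M),w_p)$ and pass to the limit, so that the statement reduces to the general metric gradient-flow theory of \cite{AmbGigSav2008} (for $p=2$) and \cite{Rossi2008} (for general $p$). The two ingredients that theory needs are weak lower semicontinuity of the functional and of its descending slope --- which are our hypotheses --- and a compactness property of bounded sublevels, which is precisely what properness of $M$ provides through Lemma~\ref{lem:weak-cpt-wasserstein} together with the weak lower semicontinuity of $w_p$ used in its proof.

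\emph{Discrete scheme and a priori estimates.} Fix $\tau>0$, set $\mu^0_\tau=\mu_0$, and define recursively a minimizer $\mu^n_\tau\in\operatorname*{arg\,min}_{\nu\in\mathcal{P}_p(M)}\bigl(E(\nu)+\tfrac1{p\tau^{p-1}}w_p^p(\mu^{n-1}_\tau,\nu)\bigr)$. Such a minimizer exists by the direct method: along a minimizing sequence the functional stays $\le E(\mu^{n-1}_\tau)<\infty$, hence (using a mild coercivity bound $E(\nu)\ge -A-B\,w_p^p(x_0,\nu)$, automatic in all applications considered here) the sequence remains in a fixed $w_p$-bounded set, Lemma~\ref{lem:weak-cpt-wasserstein} extracts a weak limit, and weak lower semicontinuity of $E$ and of $\nu\mapsto w_p(\mu^{n-1}_\tau,\nu)$ identify it as a minimizer. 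Comparing minimality with the competitor $\nu=\mu^{n-1}_\tau$ and summing yields the discrete energy inequality $E(\mu^N_\tau)+\tfrac1{p\tau^{p-1}}\sum_{n\le N}w_p^p(\mu^{n-1}_\tau,\mu^n_\tau)\le E(\mu_0)$, which controls $\sup_N E(\mu^N_\tau)$ and the $\ell^p$-norm of the increments uniformly in $\tau$. Introducing De Giorgi's variational interpolant $\tilde\mu^\tau_t$ on $((n-1)\tau,n\tau]$ and differentiating the associated value function in $t$, a further summation gives the sharpened estimate $E(\mu^N_\tau)+\tfrac1p\int_0^{N\tau}|\dot{\tilde\mu}^\tau_r|^p\,dr+\tfrac1q\int_0^{N\tau}|D^-E|^q(\tilde\mu^\tau_r)\,dr\le E(\mu_0)$.

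\emph{Compactness and the dissipation inequality.} A proper metric space is separable, so the weak topology restricted to any $w_p$-bounded subset of $\mathcal{P}_p(M)$ is metrizable; combining this with the estimates above and a Helly / refined Arzel\`a--Ascoli argument (the $\ell^p$-bound on the increments controls the oscillation of the interpolants) produces a sequence $\tau_k\downarrow0$ and a limit curve $t\mapsto\mu_t$ with $\bar\mu^{\tau_k}_t\rightharpoonup\mu_t$ and $\tilde\mu^{\tau_k}_t\rightharpoonup\mu_t$ for every $t\ge0$. Weak lower semicontinuity of $w_p$ then gives $\mu\in AC^p_{\mathrm{loc}}([0,\infty),\mathcal{P}_p(M))$ and $\int_0^t|\dot\mu_r|^p\,dr\le\liminf_k\int_0^t|\dot{\tilde\mu}^{\tau_k}_r|^p\,dr$; weak lower semicontinuity of $E$ gives $E(\mu_t)\le\liminf_k E(\bar\mu^{\tau_k}_t)$; and weak lower semicontinuity of $|D^-E|$ together with Fatou's lemma gives $\int_0^t|D^-E|^q(\mu_r)\,dr\le\liminf_k\int_0^t|D^-E|^q(\tilde\mu^{\tau_k}_r)\,dr$. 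Inserting these into the sharpened estimate yields the $(E,p)$-dissipation inequality $E(\mu_0)\ge E(\mu_t)+\tfrac1p\int_0^t|\dot\mu_r|^p\,dr+\tfrac1q\int_0^t|D^-E|^q(\mu_r)\,dr$ for all $t\ge0$.

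\emph{Upgrade to equality; the main obstacle.} To conclude that $\mu$ is a gradient flow in the sense of the definition above, it remains to prove the opposite inequality, and this follows once $|D^-E|$ is a \emph{strong} upper gradient for $E$: then $|E(\mu_t)-E(\mu_0)|\le\int_0^t|D^-E|(\mu_r)\,|\dot\mu_r|\,dr\le\tfrac1p\int_0^t|\dot\mu_r|^p\,dr+\tfrac1q\int_0^t|D^-E|^q(\mu_r)\,dr$ by Young's inequality, which together with the dissipation inequality forces equality (and, incidentally, $t\mapsto E(\mu_t)$ nonincreasing). I expect the delicate point to be exactly this: upgrading the assumed weak lower semicontinuity of the descending slope to the strong upper gradient property on the length space $\mathcal{P}_p(M)$, which one does by a chaining argument along almost-geodesics as in \cite[Sections 1.2--1.3]{AmbGigSav2008} and \cite{Rossi2008}. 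This issue disappears whenever $E$ is $K$-geodesically convex --- the setting relevant for the rest of the paper --- since then the descending slope is an upper gradient satisfying $E(\mu_t)\ge E(\mu_s)-\int_s^t|\dot\mu_r|\,|D^-E|(\mu_r)\,dr$ (recalled above), which is precisely the reverse inequality needed; likewise the coercivity used for the discrete minimizers is automatic there.
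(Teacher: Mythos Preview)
Your proposal is correct and takes essentially the same route as the paper: both reduce to the minimizing-movement theory of \cite{AmbGigSav2008}, with Lemma~\ref{lem:weak-cpt-wasserstein} supplying the weak compactness of $w_p$-bounded sets. The paper's proof is a one-line citation --- verify Assumptions~2.4(a,c) of \cite{AmbGigSav2008} via that lemma and invoke \cite[Corollary~2.4.12]{AmbGigSav2008} --- whereas you unpack the scheme explicitly; you also correctly flag the upgrade from the dissipation inequality to equality (the strong upper gradient property of $|D^-E|$) as the delicate step, a point the paper's citation absorbs without comment.
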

\begin{proof}
Just note by the previous lemma the assumptions \cite[Assumption 2.4a,c]{AmbGigSav2008}
hold and thus \cite[Corollary 2.4.12]{AmbGigSav2008} can be applied.\end{proof}
\begin{rem*}
The requirement $|D^{-}E|$ to be weakly lower semicontinuous is rather
restrictive in the non-compact case. Note, however, below we only
need lower semicontinuity, which follows from $K$-convexity. Existence
will follow from existence of the $q$-heat equation.
\end{rem*}

\section{The functional}

Given a function $U\in\mathcal{DC}_{N}$ for $N\in[1,\infty]$ we
write \textbf{$U'(\infty)=\lim_{r\to\infty}\frac{U(r)}{r}$}. Let
$\mu\in\mathcal{P}(M)$ be some reference measure, we define the functional
$\mathcal{U}_{\mu}:\mathcal{P}(M)\to\mathbb{R}\cup\{\infty\}$ by
\[
\mathcal{U}_{\mu}(\nu)=\int U(\rho)d\mu+U'(\infty)\nu_{s}(M)
\]
where $\nu=\rho\mu+\mu_{s}$ the the Lebesgue decomposition of $\nu$
w.r.t. $\mu$. 

In the following we usually fix a metric measure space $(M,d,\mu)$
and drop the subscript $\mu$ from the functional $\mathcal{U}_{\mu}$.
In addition, we use $\mathcal{U}_{m}$, $\mathcal{U}_{\alpha}$ etc.
to denote the functional generated by $U_{m}$, $U_{\alpha}$, etc.

Now let
\[
U_{p}(x)=\frac{1}{(3-p)(2-p)}(x^{3-p}-x)
\]
and let $\mathcal{U}_{p}$ be the associated functional. 
\begin{rem*}
The linear term in $U_{p}$ is just for cosmetic reasons, it does
not have any influence: Take $U=c\cdot x$ with $c>0$ and let $\mathcal{U}$
be the associated functional, then $U'(\infty)=c$ for $p\in(2,3)$
and thus 
\[
\mathcal{U}(\nu)=c\int fd\mu+c\cdot\nu^{s}(M)=c
\]
where $\nu=\rho\mu+\nu^{s}$ is the Lebesgue decomposition w.r.t.
$\mu$. Therefore, we have $\mathcal{U}_{p}(\nu)=\tilde{\mathcal{U}}_{p}(\nu)-\frac{1}{(3-p)(2-p)}$
with $\tilde{U}_{p}(x)=\frac{1}{(3-p)(2-p)}x^{3-p}$. For $p\in(1,2)$
we have $\mathcal{U}_{p}(\nu)<\infty$ iff $\nu^{s}=0$ and hence
the linear term is constant as well. 
\end{rem*}
Following the strategy in \cite[Section 7.2 and 8]{Ambrosio2013}
we will show that under a curvature conditions the $q$-heat flow
can be identified with the gradient flow of the function $\mathcal{U}_{p}$
in the $p$-Wasserstein space: More precisely, if $p\in(1,2)$ then
$3-p\in(1,2)$ and the functional is displacement convex if the strong
version of $CD_{p}(K,\infty)$ holds for some $K\ge0$ (see \cite{Kell2013a}
for definition of $CD_{p}(K,N)$). If $p\in(2,3)$ we have $3-p\in(0,1)$
so that $\mathcal{U}_{p}$ is displacement convex if $CD_{p}(0,N)$
holds with $1-\frac{1}{N}=3-p$. 
\begin{rem*}
Note, that in contrast to the case $p=2$, the strong version of $CD_{p}(K,\infty)$
does not imply $K$-convexity of functionals in $\mathcal{DC}_{\infty}$
for $K<0$ and $p<2$. We get $K'$-convexity in those cases if the
space is bounded (see \cite{Kell2013a}). Also Ohta and Takatsu could
show that on a weighted Riemannian manifold of non-negative Ricci
curvature the functional $\mathcal{U}_{p}$ with $p\in(2,3)$ is $K$-convex
in $\mathcal{P}_{2}(M)$ if $CD(K,N)$ holds (see \cite[Theorem 4.1]{OT2011}).
This is, however, not enough for $p\ne2$.
\end{rem*}
Recall from the introduction that $r>0$ will be an abbreviation for
$q\in(\frac{1+\sqrt{5}}{2},\infty)$, or equivalently $p\in(1,\frac{3+\sqrt{5}}{2})$.
\begin{lem}
Assume $r>0$ then 
\[
\nu\mapsto\mathcal{U}_{p}(\nu)
\]
is lower semicontinuous in $\mathcal{P}_{p}(M)$.\end{lem}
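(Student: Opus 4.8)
The plan is to show lower semicontinuity of $\mathcal{U}_p$ by combining the known weak lower semicontinuity of the internal energy functional $\mathcal{U}_p$ with the fact that the weak topology is coarser than the $w_p$-topology. Recall that convergence in $\mathcal{P}_p(M)$ implies weak convergence of measures (indeed much more). So if I can establish that $\nu\mapsto\mathcal{U}_p(\nu)$ is lower semicontinuous with respect to the weak topology on $\mathcal{P}(M)$, I immediately get lower semicontinuity with respect to the stronger $w_p$-topology.

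The weak lower semicontinuity of $\mathcal{U}_p$ is a standard fact from the theory of internal energies (see e.g. Villani, Chapter 9, or the McCann/Ambrosio--Gigli--Savar\'e treatment): for $U$ convex with $U(0)=0$ and $U'(\infty)$ finite, the functional $\nu\mapsto\int U(\rho)\,d\mu+U'(\infty)\nu_s(M)$ is weakly lower semicontinuous on $\mathcal{P}(M)$ whenever the reference measure $\mu$ is, say, a probability measure (or more generally $\sigma$-finite with appropriate moment growth). Here I must split into the two regimes. For $p\in(2,3)$ we have $3-p\in(0,1)$, so $U_p$ is concave after the linear correction is taken into account --- but wait, $U_p(x)=\frac{1}{(3-p)(2-p)}(x^{3-p}-x)$: since $(3-p)(2-p)<0$ for $p\in(2,3)$, the prefactor is negative, so $U_p$ is in fact convex (the function $x^{3-p}$ is concave, times a negative constant is convex), and $U_p'(\infty)=\lim_{x\to\infty}U_p(x)/x=\frac{1}{(3-p)(2-p)}\lim(x^{2-p}-1)=\frac{-1}{(3-p)(2-p)}>0$ is finite and positive. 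For $p\in(1,2)$, $3-p\in(1,2)$, the prefactor $(3-p)(2-p)>0$, and $x^{3-p}$ is convex superlinear, so $U_p$ is convex with $U_p'(\infty)=+\infty$; in this case $\mathcal{U}_p(\nu)<\infty$ forces $\nu_s=0$, and lower semicontinuity of $\nu\mapsto\int U_p(\rho)\,d\mu$ along weakly convergent sequences follows from Fatou-type / Ioffe-type lower semicontinuity arguments for convex integrands, again a classical result. So in both cases $U_p\in\mathcal{DC}_N$ for the appropriate $N$ (as asserted right before the lemma, with $1-\frac1N=3-p$ when $p\in(2,3)$ and $N=\infty$ when $p\in(1,2)$), and the abstract lower semicontinuity theorem for $\mathcal{U}_\mu$ with $U\in\mathcal{DC}_N$ applies.

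The main obstacle is that the reference measure $\mu$ need not be finite (the paper explicitly allows $\mu$ locally finite and non-compact $M$), whereas the cleanest statements of weak lower semicontinuity of internal energies assume $\mu\in\mathcal{P}(M)$. I would handle this by noting that for the lemma we only need lower semicontinuity along sequences $\nu_n\to\nu$ in $w_p$, and such a sequence is $w_p$-bounded, hence by Lemma~\ref{lem:weak-cpt-wasserstein} (when $M$ is proper) tight; more robustly, one uses the standard device of writing $U_p(\rho) = U_p(\rho) - U_p'(\infty)\rho$ plus $U_p'(\infty)\rho$ and observing that the "corrected" integrand $\tilde U_p(x):=U_p(x)-U_p'(\infty)x$ satisfies $\tilde U_p\le 0$ (for $p\in(2,3)$) or is handled by the $\nu_s=0$ constraint (for $p\in(1,2)$), reducing matters to lower semicontinuity of $\int \tilde U_p(\rho)\,d\mu + U_p'(\infty)\nu(M) = \int \tilde U_p(\rho)\,d\mu + U_p'(\infty)$ since $\nu$ is a probability measure; the term $\int\tilde U_p(\rho)\,d\mu$ is then lower semicontinuous by the Ioffe/De Giorgi lower semicontinuity theorem for integral functionals with convex (in the density variable) integrand bounded above, which does not require finiteness of $\mu$. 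I would cite \cite[Chapter 9]{Villani2009} and the relevant lemma in \cite{Ambrosio2013} (where the analogous statement is proved for the entropy, $p=2$) and remark that the argument there applies verbatim with $U_p$ in place of $U_\infty(x)=x\log x$, so I would keep the proof short: reduce to weak lower semicontinuity, invoke the classical result, and note the moment/finiteness issue is absorbed because elements of $\mathcal{P}_p(M)$ are probability measures so the linear term is a constant.
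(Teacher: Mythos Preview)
Your proposal is correct and follows essentially the same approach as the paper: both rely on the observation that $U_p$ is convex (because $3-p>0$ for $p\in(1,\tfrac{3+\sqrt5}{2})$, and the sign of the prefactor $(3-p)(2-p)$ makes $x\mapsto U_p(x)$ convex in both regimes), and then invoke the standard weak lower semicontinuity result for internal energy functionals generated by convex $U$. The paper compresses this into a single line (``$U_p$ is convex and $3-p>0$''), whereas you spell out the case split, the value of $U_p'(\infty)$, and the passage from $w_p$-convergence to weak convergence; your additional care about non-finite $\mu$ goes beyond what the paper addresses but does not change the argument.
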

\begin{proof}
Just note that $U_{p}$ is convex and for $r>0$ we have $p\in(1,\frac{3+\sqrt{5}}{2})\subset(1,3)$
and thus $3-p>0$.\end{proof}
\begin{rem*}
The functional $\mathcal{U}_{p}$ appeared in a similar form already
in \cite[Proof of Lemma 3.13]{Gigli2012} and Otto's preprint \cite{Otto1996}
and also Augeh's thesis \cite{Agueh2002,Agueh2005}. Gigli used the
functional and the gradient flow of the $q$-Cheeger energy to show
that all gradients of $q$-Sobolev functions can be weakly represented
by a plan. In the Euclidean case, Otto and Augeh showed that the parabolic
$q$-Laplace equation, which is the $q$-heat flow for smooth solutions,
can be solved using the gradient flow of $\mathcal{U}_{p}$ in the
$p$-Wasserstein case. This should also be compared to \cite{OT2011,OT2011a},
where the (parabolic) porous media equation is solved via a gradient
flow of a similar functional in the $2$-Wasserstein space for Riemannian
manifolds of non-negative Ricci curvature. Note, however, no identification
is done. Furthermore, our approach shows that the abstract solution
of the $q$-heat flow solves the gradient flow problem in the $p$-Wasserstein
space.
\end{rem*}

\section{Gradient flow of the Cheeger energy in $L^{2}$}

We assume now that $\operatorname{Ch}_{q}$ is the $q$-Cheeger energy
on $(M,d,\mu)$ where $(M,d)$ is a proper metric space and $\mu$
is a $\sigma$-finite measure. From \cite[Proposition 4.1]{Ambrosio2013}
we know that the domain of $\operatorname{Ch}_{q}$ is dense in $L^{2}(M,\mu)$. 

Since $L^{2}(M,\mu)$ is Hilbert and $\operatorname{Ch}_{q}$ is convex
and lower semicontinuous, we can apply the classical theory of gradient
flows developed in \cite{Brezis1973} (see also \cite{AmbGigSav2008}).
For that recall that the subdifferential $\partial^{-}\operatorname{Ch}_{q}$
at $f\in D(\operatorname{Ch}_{q})$ is defined as (possibly empty)
set of functions $\ell\in L^{2}(M,\mu)$ such that for all $g\in L^{2}(M,\mu)$
\[
\int\ell(g-f)d\mu\le\operatorname{Ch}_{q}(g)-\operatorname{Ch}_{q}(f).
\]
If $f\notin D(\operatorname{Ch}_{q})$ then $\partial\operatorname{Ch}_{q}(f)=\varnothing$.
The domain $D(\partial\operatorname{Ch}_{q})$ of $\partial^{-}\operatorname{Ch}_{q}$
will be all $f\in L^{2}(M,\mu)$ such that $\partial^{-}\operatorname{Ch}_{q}\ne\varnothing$,
which is dense in $L^{2}(M,\mu)$ (see \cite[Proposition 2.11]{Brezis1973}).

By \cite{Brezis1973} the gradient flow of $\operatorname{Ch}_{q}$
gives for all $f_{0}\in L^{2}(M,\mu)$ a locally Lipschitz map $t\mapsto f_{t}=H_{t}(f_{t})$
(we drop $q$ if no confusion arises) from $(0,\infty)$ to $L^{2}(M,\mu)$
and $f_{t}\to f_{0}$ in $L^{2}(M,\mu)$ as $t\to0$ and the derivative
satisfies 
\[
\frac{d}{dt}f_{t}\in-\partial^{-}\operatorname{Ch}_{q}(f_{t})\qquad\mbox{for a.e. }t\in(0,\infty).
\]

\begin{defn}
[$q$-Laplacian] Let $f\in D(\partial\operatorname{Ch}_{q})$ then
$\Delta_{q}f$ is defined as the element $\ell\in-\partial^{-}\operatorname{Ch}_{q}(f)$
of minimal $L^{2}$-norm.
\end{defn}
By \cite[Theorem 3.2]{Brezis1973} we have the regularization effect
that $\frac{d^{+}}{dt}f_{t}$ exists everywhere in $(0,\infty)$ and
is the element $\ell\in-\partial^{-}\operatorname{Ch}_{q}(f_{t})$
with minimal $L^{2}$-norm, i.e. $\frac{d^{+}}{dt}f_{t}=\Delta_{q}f_{t}$.
\begin{rem*}
We can also define the $L$-Laplacian using the same theory where
$L$ is a convex increasing function with $L(0)=0$. Since such flows
might be interesting in combination with Orlicz-Wasserstein spaces,
we will analyze these flows in the future.\end{rem*}
\begin{prop}
[Properties of the Laplacian] If $f\in D(\Delta_{q})$ and $g\in D(\operatorname{Ch}_{q})$
then 
\[
-\int g\Delta_{q}fd\mu\le\int|\nabla g|_{*}|\nabla f|_{*}^{q-1}d\mu.
\]
Equality holds if $g=\phi(f)$ for some Lipschitz function $\phi:J\to\mathbb{R}$
with $J$ a closed interval containing the image of $f$ (and $\phi(0)=0$
if $\mu(M)=\infty$). In that case one also has
\[
-\int\phi(f)\Delta_{q}fd\mu=\int\phi'(f)|\nabla f|_{*}^{q}d\mu.
\]
If, in addition, $g\in D(\Delta_{q})$ and $\phi$ is nondecreasing
and Lipschitz on $\mathbb{R}$ with $\phi(0)=0$ then
\[
\int(\Delta_{q}g-\Delta_{q}f)\phi(g-f)d\mu\le0.
\]
\end{prop}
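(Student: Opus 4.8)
The plan is to read everything off from the characterisation of $\Delta_q f$ coming from \cite{Brezis1973}, namely $-\Delta_q f\in\partial^-\operatorname{Ch}_q(f)$, i.e.
\[
-\int\Delta_q f\,(h-f)\,d\mu\le\operatorname{Ch}_q(h)-\operatorname{Ch}_q(f)\qquad\text{for all }h\in L^2(M,\mu),
\]
and then to insert well-chosen one-parameter families $h=h_\varepsilon$ and differentiate at $\varepsilon=0^+$. For the first inequality I would take $h=f+\varepsilon g$ with $\varepsilon>0$; by sublinearity of the relaxed slope and monotonicity of $t\mapsto t^q$ one has $|\nabla(f+\varepsilon g)|_*^q\le(|\nabla f|_*+\varepsilon|\nabla g|_*)^q$, so after dividing by $\varepsilon$
\[
-\int\Delta_q f\,g\,d\mu\le\frac1{q\varepsilon}\int\Big((|\nabla f|_*+\varepsilon|\nabla g|_*)^q-|\nabla f|_*^q\Big)\,d\mu .
\]
Since $t\mapsto t^q$ is convex the integrand decreases to $|\nabla f|_*^{q-1}|\nabla g|_*$ as $\varepsilon\downarrow0$ and, for $\varepsilon$ below a fixed $\varepsilon_0$, is dominated by an $L^1$ function because $|\nabla f|_*,|\nabla g|_*\in L^q$; monotone/dominated convergence then yields the claimed bound, the right-hand side being finite by Hölder's inequality.

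For the equality statement I would use the symmetric perturbations $h=f\pm\varepsilon\phi(f)$. For $0<\varepsilon<1/\mathrm{Lip}(\phi)$ the maps $t\mapsto t\pm\varepsilon\phi(t)$ are Lipschitz and nondecreasing on $J$, so the chain rule for the relaxed slope gives $|\nabla(f\pm\varepsilon\phi(f))|_*=(1\pm\varepsilon\phi'(f))|\nabla f|_*$ $\mu$-a.e.; plugging these into the subdifferential inequality, dividing by $\varepsilon$ and letting $\varepsilon\downarrow0$ (dominated convergence, using $\tfrac1{q\varepsilon}\big((1\pm\varepsilon c)^q-1\big)\to\pm c$ together with the uniform bound $|\phi'(f)|\le\mathrm{Lip}(\phi)$) produces the two opposite estimates
\[
-\int\Delta_q f\,\phi(f)\,d\mu\le\int\phi'(f)|\nabla f|_*^q\,d\mu\le-\int\Delta_q f\,\phi(f)\,d\mu ,
\]
which is exactly $-\int\phi(f)\Delta_q f\,d\mu=\int\phi'(f)|\nabla f|_*^q\,d\mu$. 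When $\phi$ is moreover nondecreasing the chain rule holds with equality, $|\nabla\phi(f)|_*=\phi'(f)|\nabla f|_*$, so this common value equals $\int|\nabla\phi(f)|_*|\nabla f|_*^{q-1}\,d\mu$ and equality in the first inequality follows.

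For the last inequality I would combine the subdifferential inequalities for $f$ and for $g$ with Proposition \ref{prop:grad-contr}. Fix a nondecreasing Lipschitz $\phi$ with $\phi(0)=0$ and $\varepsilon\in(0,1/\mathrm{Lip}(\phi)]$, so that $\varepsilon\phi$ is a nondecreasing contraction; Proposition \ref{prop:grad-contr} applied with $\varepsilon\phi$ in place of $\phi$ gives
\[
\operatorname{Ch}_q\big(f+\varepsilon\phi(g-f)\big)+\operatorname{Ch}_q\big(g-\varepsilon\phi(g-f)\big)\le\operatorname{Ch}_q(f)+\operatorname{Ch}_q(g),
\]
and in particular both perturbed functions lie in $D(\operatorname{Ch}_q)$ (note $\phi(g-f)\in L^2$ since $|\phi(g-f)|\le\mathrm{Lip}(\phi)\,|g-f|$). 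Writing the subdifferential inequality for $f$ with $h=f+\varepsilon\phi(g-f)$ and for $g$ with $h=g-\varepsilon\phi(g-f)$ and adding them, the Cheeger-energy terms assemble precisely into the left side minus the right side of the displayed inequality, hence are $\le0$; dividing by $\varepsilon>0$ gives $\int(\Delta_q g-\Delta_q f)\,\phi(g-f)\,d\mu\le0$.

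I expect the deep content to be light and the difficulty to be bookkeeping: the decisive moves are the choices $h=f\pm\varepsilon\phi(f)$ and the pair $h=f+\varepsilon\phi(g-f)$, $h=g-\varepsilon\phi(g-f)$ feeding into the convexity estimate of Proposition \ref{prop:grad-contr}. The main technical point is the passage $\varepsilon\downarrow0$: one must keep the perturbed functions in $D(\operatorname{Ch}_q)$ so that the subdifferential characterisation applies, identify the slopes of $f\pm\varepsilon\phi(f)$ via the chain-rule equality for nondecreasing functions, and dominate the difference quotients of $t\mapsto t^q$ uniformly in $\varepsilon$ in order to differentiate under the integral sign; the range $q<2$ forces a little extra care here but nothing essential changes, since $t\mapsto t^q$ is convex and increasing for every $q>1$.
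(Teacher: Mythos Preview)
Your proposal is correct and follows essentially the same route as the paper: the subdifferential inequality $-\Delta_q f\in\partial^-\operatorname{Ch}_q(f)$ is tested with $h=f+\varepsilon g$ for the first inequality, with $h=f\pm\varepsilon\phi(f)$ and the chain rule $|\nabla(f\pm\varepsilon\phi(f))|_*=(1\pm\varepsilon\phi'(f))|\nabla f|_*$ for the identity, and with the pair $h=f+\varepsilon\phi(g-f)$, $h=g-\varepsilon\phi(g-f)$ combined with Proposition~\ref{prop:grad-contr} for the last estimate. Your treatment of the limit $\varepsilon\downarrow0$ via monotonicity/domination is slightly more explicit than the paper's $o(\varepsilon)$ expansion, and your remark that the \emph{equality in the first displayed inequality} (as opposed to the identity $-\int\phi(f)\Delta_q f=\int\phi'(f)|\nabla f|_*^q$) needs $\phi$ nondecreasing is a correct refinement that the paper leaves implicit.
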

\begin{proof}
The first two parts were already proven in \cite[Proposition 6.5]{Ambrosio2011}
for $C^{1}$-functions $\phi$. However, using the proof of \cite[Proposition 4.15]{Ambrosio2013},
adapted to $p\ne2$, it can be proven in the same way. For convenience
we include the full proof: Since $-\Delta_{q}f\in\partial^{-}\operatorname{Ch}_{q}(f)$
we have for all $\epsilon>0$
\[
\operatorname{Ch}_{q}(f)-\int\epsilon g\Delta_{q}fd\mu\le\operatorname{Ch}_{q}(f+\epsilon g).
\]
Furthermore, $|\nabla f|_{*}+\epsilon|\nabla g|_{*}$ is a relaxed
slope of $f+\epsilon g$, we get 
\begin{eqnarray*}
-\int\epsilon g\Delta f & \le & \frac{1}{q}\int\left(|\nabla f|_{*}+\epsilon|\nabla g|_{*}\right)^{q}-|\nabla f|_{*}^{q}d\mu\\
 & = & \epsilon\int|\nabla g|_{*}|\nabla f|_{*}^{q-1}d\mu+o(\epsilon).
\end{eqnarray*}
Dividing by $\epsilon$ and letting $\epsilon\to0$ we obtain the
result.

In case $g=\phi(f)$ we apply chain rule and get $|\nabla(f+\epsilon\phi(f))|_{*}=(1+\epsilon\phi'(f))|\nabla f|_{*}$
and thus
\begin{eqnarray*}
\operatorname{Ch}_{q}(f+\epsilon\phi(f))-\operatorname{Ch}_{q}(f) & = & \frac{1}{q}\int|\nabla f|_{*}^{q}((1+\epsilon\phi(f))^{q}-1)d\mu\\
 & = & \epsilon\int\phi'(f)|\nabla f|_{*}^{q}d\mu+o(\epsilon).
\end{eqnarray*}

For the third part, just set $h=\phi(g-f)$, then $h\in D(\operatorname{Ch}_{q})$
and for $\epsilon>0$ 
\begin{eqnarray*}
-\epsilon\int(\Delta_{q}f-\Delta_{q}g)hd\mu & = & -\epsilon\int\Delta_{q}f\cdot hd\mu-\epsilon\int\Delta_{q}g\cdot(-h)d\mu\\
 & \le & \operatorname{Ch}_{q}(f+\epsilon h)-\operatorname{Ch}_{q}(f)+\operatorname{Ch}_{q}(g-\epsilon h)-\operatorname{Ch}_{q}(g).
\end{eqnarray*}
Taking $\epsilon$ sufficiently small such that $\epsilon\phi$ is
a contraction, we can apply Proposition \ref{prop:grad-contr} and
conclude.
\end{proof}
Actually with the help of Proposition \ref{prop:grad-contr} we can
also prove:
\begin{prop}
If $f,g\in D(\Delta_{L})$ and $\phi$ is nondecreasing and Lipschitz
on $\mathbb{R}$ with $\phi(0)=0$ then
\[
\int(\Delta_{L}g-\Delta_{L}f)\phi(g-f)d\mu\le0.
\]
\end{prop}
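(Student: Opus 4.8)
\noindent\textit{Proof proposal.} The plan is to repeat the argument used for the third assertion of the preceding proposition, with $\operatorname{Ch}_{L}$ in place of $\operatorname{Ch}_{q}$; the only extra input needed is the $L$-version of Proposition~\ref{prop:grad-contr}, and this is available for free because Lemma~\ref{lem:conv-diff} is already stated for an arbitrary convex nondecreasing $\psi:[0,\infty)\to\mathbb{R}$.

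First I would record the $L$-analogue of Proposition~\ref{prop:grad-contr}: running the proof of \cite[Proposition~4.8]{Ambrosio2013} with $\psi=L$ instead of $\psi(r)=r^{q}/q$ --- approximate $f,g\in D(\operatorname{Ch}_{L})$ by Lipschitz functions, apply Lemma~\ref{lem:conv-diff} pointwise to their slopes, integrate, and pass to the relaxed slopes using the lower semicontinuity of $\operatorname{Ch}_{L}$ --- one obtains, for every nondecreasing contraction $\psi_{0}:\mathbb{R}\to\mathbb{R}$ with $\psi_{0}(0)=0$ and all $f,g\in D(\operatorname{Ch}_{L})$,
\[
\operatorname{Ch}_{L}\bigl(f+\psi_{0}(g-f)\bigr)+\operatorname{Ch}_{L}\bigl(g-\psi_{0}(g-f)\bigr)\le\operatorname{Ch}_{L}(f)+\operatorname{Ch}_{L}(g).
\]
Here convexity of $\operatorname{Ch}_{L}$ follows from convexity and monotonicity of $L$ together with sublinearity and positive $1$-homogeneity of $|\nabla\cdot|_{*}$, and lower semicontinuity is part of the relaxation machinery defining $|\nabla\cdot|_{*}$, so the classical subdifferential calculus of \cite{Brezis1973} indeed applies to $\operatorname{Ch}_{L}$, as remarked above.

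Next, set $h:=\phi(g-f)$. Since $\phi$ is Lipschitz with $\phi(0)=0$ we have $|h|\le C\,(|f|+|g|)$ for a Lipschitz constant $C$ of $\phi$, hence $h\in L^{2}(M,\mu)$; moreover $f,g\in D(\Delta_{L})=D(\partial^{-}\operatorname{Ch}_{L})\subseteq D(\operatorname{Ch}_{L})$. Because $-\Delta_{L}f\in\partial^{-}\operatorname{Ch}_{L}(f)$ and $-\Delta_{L}g\in\partial^{-}\operatorname{Ch}_{L}(g)$, the definition of the subdifferential applied to the competitors $f+\epsilon h$ and $g-\epsilon h$ gives, for every $\epsilon>0$,
\[
-\epsilon\int(\Delta_{L}f-\Delta_{L}g)\,h\,d\mu\le\operatorname{Ch}_{L}(f+\epsilon h)-\operatorname{Ch}_{L}(f)+\operatorname{Ch}_{L}(g-\epsilon h)-\operatorname{Ch}_{L}(g),
\]
and no finiteness of the perturbed energies need be checked separately since they appear on the larger side. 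Finally I would choose $\epsilon>0$ so small that $\epsilon\phi$ is a nondecreasing contraction (automatically $\epsilon\phi(0)=0$), apply the $L$-analogue of Proposition~\ref{prop:grad-contr} with $\psi_{0}=\epsilon\phi$ to see the right-hand side is $\le0$, divide by $\epsilon>0$, and rearrange to obtain $\int(\Delta_{L}g-\Delta_{L}f)\,\phi(g-f)\,d\mu\le0$.

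The main obstacle is thus concentrated entirely in the first step: one must be sure that the proof of Proposition~\ref{prop:grad-contr} / \cite[Proposition~4.8]{Ambrosio2013} really goes through with the generic convex weight $L$ rather than the power $r\mapsto r^{q}/q$, i.e.\ that the relaxation and chain-rule steps for $|\nabla\cdot|_{*}$ are insensitive to the choice of weight. Given that Lemma~\ref{lem:conv-diff} is already formulated for general convex nondecreasing $\psi$, this is bookkeeping identical to the $q$-case and requires no new idea.
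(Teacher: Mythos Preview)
Your proposal is correct and follows exactly the paper's argument: set $h=\phi(g-f)$, use the subdifferential inequality for $-\Delta_{L}f$ and $-\Delta_{L}g$ to bound $-\epsilon\int(\Delta_{L}f-\Delta_{L}g)h\,d\mu$ by $\operatorname{Ch}_{L}(f+\epsilon h)-\operatorname{Ch}_{L}(f)+\operatorname{Ch}_{L}(g-\epsilon h)-\operatorname{Ch}_{L}(g)$, then pick $\epsilon$ small so that $\epsilon\phi$ is a contraction and invoke the contraction inequality for the energy. You are in fact more careful than the paper, which simply cites Proposition~\ref{prop:grad-contr} (stated only for the $q$-th power) whereas you correctly note that one needs its $L$-analogue, obtained by running the same proof with $\psi=L$ in Lemma~\ref{lem:conv-diff}.
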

\begin{proof}
As above set $h=\phi(g-f)$, then $h\in D(\operatorname{Ch}_{q})$
and for $\epsilon>0$ 
\begin{eqnarray*}
-\epsilon\int(\Delta_{L}f-\Delta_{L}g)hd\mu & = & -\epsilon\int\Delta_{L}f\cdot hd\mu-\epsilon\int\Delta_{L}g\cdot(-h)d\mu\\
 & \le & \operatorname{Ch}_{L}(f+\epsilon h)-\operatorname{Ch}_{L}(f)+\operatorname{Ch}_{L}(g-\epsilon h)-\operatorname{Ch}_{L}(g).
\end{eqnarray*}
Then conclude by taking $\epsilon$ sufficiently small and applying
Proposition \ref{prop:grad-contr}.
\end{proof}
Using these results we can generalize \cite[Theorem 4.16]{Ambrosio2013}
to the case $p\ne2$ (and also \cite[Proposition 6.6]{Ambrosio2011}
where $0<c\le f_{0}\le C<\infty$ is required).
\begin{thm}
[Comparision principle and contraction] \label{thm:laplace-calculus}Let
$f_{t}=H_{t}(f_{0})$ and $g_{t}=H_{t}(g_{0})$ be the gradient flows
of $\operatorname{Ch}_{q}$ starting from $f_{0},g_{0}\in L^{2}(M,\mu)$
respectively. Then the following holds:
\begin{enumerate}
\item (Comparison principle) Assume $f_{0}\le C$ (resp. $f_{0}\ge c$).
Then $f_{t}\le C$ (resp. $f_{t}\ge c$) for every $t\ge0$. Similarly,
if $f_{0}\le g_{0}+C$ for some constant $C\in\mathbb{R}$, then $f_{t}\le g_{t}+C$.
\item (Contraction) If $e:\mathbb{R}\to[-l,\infty]$ is a convex lower semicontinuous
function and $E(f)=\int e(f)d\mu$ is the associated convex and lower
semicontinuous functional in $L^{2}(M,\mu)$ then
\[
E(f_{t})\le E(f_{0})\qquad\mbox{ for every }t\ge0,
\]
and 
\[
E(f_{t}-g_{t})\le E(f_{0}-g_{0})\qquad\mbox{ for every }t\ge0.
\]
In particular, $H_{t}:L^{2}(M,\mu)\to L^{2}(M,\mu)$ is a contraction
on $L^{2}(M,\mu)\cap L^{r}(M,\mu)$ w.r.t. the $L^{r}(M,\mu)$-norm
for all $r\ge1$, i.e. for all $f_{0},g_{0}\in L^{2}(M,\mu)\cap L^{r}(M,\mu)$
then
\[
\|H_{t}(f_{0})-H(g_{0})\|_{r}\le\|f_{0}-g_{0}\|_{r}.
\]

\item If $e:\mathbb{R}\to[0,\infty]$ is locally Lipschitz in $\mathbb{R}$
and $E(f_{0})<\infty$ then
\[
E(f_{t})+\int_{0}^{t}\int e''(f_{t})|\nabla f_{t}|_{*}^{q}d\mu ds=E(f_{0})\;\forall t\ge0.
\]

\item When $\mu(M)<\infty$ we have 
\[
\int f_{t}d\mu=\int f_{0}d\mu\qquad\mbox{ for every }t\ge0.
\]

\end{enumerate}
\end{thm}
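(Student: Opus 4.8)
The strategy is to run the four assertions through the abstract theory of gradient flows of convex lower semicontinuous functionals on the Hilbert space $L^{2}(M,\mu)$, exactly as in \cite[Theorem 4.16]{Ambrosio2013}, with the key nonlinear inputs supplied by Proposition \ref{prop:grad-contr} and the Properties of the Laplacian proposition. The main engine for (1) and (2) is the following standard fact: if $F:L^{2}\to\mathbb{R}\cup\{\infty\}$ is convex, lower semicontinuous and the flow map $H_{t}$ satisfies $\tfrac{d}{dt}f_{t}=\Delta_{q}f_{t}$ a.e., and if $E$ is another convex lower semicontinuous functional whose subdifferential is ``compatible'' with that of $\operatorname{Ch}_{q}$ in the sense that $\int (\Delta_{q}g-\Delta_{q}f)\,\partial^{-}E(g-f)\,d\mu\le 0$, then $t\mapsto E(f_{t}-g_{t})$ is nonincreasing. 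So the real content is checking that compatibility, which is precisely the third display of the Laplacian proposition together with the analogous statement for a pair of flows.

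First I would prove (2). Given the convex l.s.c.\ $e:\mathbb{R}\to[-l,\infty]$, approximate it from below by $C^{1}$ (indeed smooth) convex functions $e_{k}$ with bounded second derivative, so that $E_{k}(f)=\int e_{k}(f)\,d\mu$ is convex and finite on a dense set; by monotone convergence it suffices to prove monotonicity for each $E_{k}$. For $E_{k}$ and a single flow, differentiate: $\tfrac{d}{dt}E_{k}(f_{t})=\int e_{k}'(f_{t})\Delta_{q}f_{t}\,d\mu = -\int e_{k}''(f_{t})|\nabla f_{t}|_{*}^{q}\,d\mu\le 0$, using the equality case of the Laplacian proposition with $\phi=e_{k}'$ (legitimate since $e_{k}'$ is Lipschitz and, after subtracting a constant if $\mu(M)=\infty$, may be taken to vanish at $0$; here the hypothesis on $e$ being bounded below keeps $E_{k}(f_{t})$ finite). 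For the two-flow statement, set $h_{t}=\phi(f_{t}-g_{t})$ with $\phi=e_{k}'$ nondecreasing and apply the third part of the Laplacian proposition (via Proposition \ref{prop:grad-contr}) to get $\tfrac{d}{dt}E_{k}(f_{t}-g_{t})=\int e_{k}'(f_{t}-g_{t})(\Delta_{q}f_{t}-\Delta_{q}g_{t})\,d\mu\le 0$. Passing to the supremum over $k$ gives the claim for $E$; the $L^{r}$-contraction follows by taking $e(x)=|x|^{r}$ (for $r\ge 1$ this is convex, l.s.c., bounded below) and noting $E(f_{t}-g_{t})^{1/r}=\|f_{t}-g_{t}\|_{r}$.

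Assertion (1) is then a corollary of (2) by choosing suitable obstacle functionals: for $f_{0}\le C$ take $e(x)=[(x-C)_{+}]^{2}$, which is convex, l.s.c., nonnegative, and vanishes exactly on $(-\infty,C]$; then $E(f_{0})=0$ forces $E(f_{t})=0$, i.e.\ $f_{t}\le C$. The lower bound and the comparison $f_{0}\le g_{0}+C\Rightarrow f_{t}\le g_{t}+C$ are handled identically using $e(x)=[(x)_{-}]^{2}$ and the two-flow version of (2) with the constant $C$ absorbed (note $H_{t}$ fixes constants when $\mu(M)<\infty$, and when $\mu(M)=\infty$ one works directly with $f_{t}-g_{t}$). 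Assertion (3) is the differential identity already used inside the proof of (2): for $e$ locally Lipschitz with $E(f_{0})<\infty$, mollify to smooth convex $e_{k}\uparrow e$, integrate $\tfrac{d}{ds}E_{k}(f_{s})=-\int e_{k}''(f_{s})|\nabla f_{s}|_{*}^{q}\,d\mu$ from $0$ to $t$, and pass to the limit using monotone convergence on both sides (the right side increases in $k$ since $e_{k}''\ge 0$ increases toward $e''$). Assertion (4) follows by applying (3), or directly the equality case of the Laplacian proposition, with $\phi\equiv 1$ (allowed because $\mu(M)<\infty$, so constants lie in $L^{2}$ and in $D(\operatorname{Ch}_{q})$ with zero relaxed slope): $\tfrac{d}{dt}\int f_{t}\,d\mu=\int \Delta_{q}f_{t}\,d\mu=-\int \phi'(f_{t})|\nabla f_{t}|_{*}^{q}\,d\mu=0$.

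The main obstacle I anticipate is bookkeeping around the integrability and finiteness of the approximating functionals $E_{k}(f_{t})$ and the justification of differentiation under the integral: one must know $t\mapsto f_{t}$ is locally Lipschitz into $L^{2}$ with $\tfrac{d^{+}}{dt}f_{t}=\Delta_{q}f_{t}$ everywhere (this is quoted from \cite{Brezis1973}), that $e_{k}'(f_{t})\in L^{2}$ (Lipschitz slope of $e_{k}$ and, if $\mu(M)=\infty$, the normalization $e_{k}'(0)=0$), and that the chain rule $\tfrac{d}{dt}E_{k}(f_{t})=\langle e_{k}'(f_{t}),\tfrac{d}{dt}f_{t}\rangle$ holds — which is where one invokes the equality case of the Laplacian proposition rather than a naive computation. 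Once these technical points are in place the argument is essentially a transcription of \cite[Theorem 4.16]{Ambrosio2013}; the only genuinely $p$-dependent ingredient, the contraction estimate for relaxed slopes, has already been secured in Proposition \ref{prop:grad-contr}.
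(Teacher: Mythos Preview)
Your proposal is correct and follows essentially the same route as the paper: reduce to the case where $e'$ is bounded and Lipschitz, differentiate $t\mapsto E(f_t-g_t)$ using the third display of the Laplacian proposition (which rests on Proposition \ref{prop:grad-contr}), and integrate; (1) and (4) then drop out as special choices of $e$. The only cosmetic differences are that the paper takes $e(r)=\max\{r-C,0\}$ for (1) where you take $[(x-C)_+]^2$, and that in (3) the paper does not assume the approximants are convex---it works with $e'$ bounded and globally Lipschitz so that $e''$ exists a.e., which means your monotone-convergence step ``$e_k''\ge 0$ increases to $e''$'' is not quite the right justification in the general locally Lipschitz case, but this is a minor bookkeeping adjustment rather than a gap in the strategy.
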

\begin{rem*}
(1) The first two assertions also hold for the gradient flow of the
$L$-Cheeger energy, we will leave the details to the reader.\end{rem*}
\begin{proof}
The proof follows along the lines of \cite[Theorem 4.16]{Ambrosio2013}.
We will only show the result assuming $e'$ is bounded and globally
Lipschitz. By the same approximation as in \cite[Theorem 4.16]{Ambrosio2013}
the result follows.

Note first that the first statement follows by choosing $e(r)=\max\{r-C,0\}$
(resp. $e(r)=\max\{c-r,0\}$). 

So let $e'$ be bounded and Lipschitz on $\mathbb{R}$ then for $x,y\in\mathbb{R}$
we have 
\[
|e'(x)|\le|e'(0)|+\operatorname{Lip}(e')|x|,
\]
\[
|e(y)-e(x)-e'(x)(y-x)|\le\frac{1}{2}\operatorname{Lip}(e')|y-x|^{2}
\]
\[
|e(y)-e(x)|\le\left(|e'(0)|+\operatorname{Lip}(e')\right)\left(|x|+|y-x|\right)|y-x|,
\]
where we assume $e'(0)=e(0)=0$ if $\mu(M)=\infty$. Furthermore,
we will assume w.l.o.g. $E(f_{0}-g_{0})<\infty$ (which forces $e(0)=0$
if $\mu(M)=\infty$).

By convexity of $\operatorname{Ch}_{q}$ the maps $t\mapsto f_{t}$
and $t\mapsto g_{t}$ are locally Lipschitz continuous in $(0,\infty)$
with values in $L^{2}(M,\mu)$ (see \cite[Theorem 2.4.15]{AmbGigSav2008}
and \cite[Theorem 3.2]{Brezis1973}). Thus, the map $t\mapsto e(f_{t}-g_{t})$
is locally Lipschitz in $(0,\infty)$ with values in $L^{1}(M,\mu)$,
in particular, wherever $t\mapsto f_{t}$ and $t\mapsto g_{t}$ are
commonly differentiable, we have
\begin{eqnarray*}
\frac{d}{dt}e(f_{t}-g_{t}) & = & e'(f_{t}-g_{t})\frac{d}{dt}(f_{t}-g_{t})\\
 & = & e'(f_{t}-g_{t})(\Delta_{q}f_{t}-\Delta_{q}g_{t})\le0.
\end{eqnarray*}
Hence the function is $t\mapsto E(f_{t}-g_{t})$ is locally Lipschitz
in $(0,\infty)$. Integrating we see that the second assertion holds.

For the third statement, set $g_{0}=g_{t}=0$. Absolute continuity
of $t\mapsto E(f_{t})$ and the previous theorem yields for $\phi=e'$
\[
\frac{d}{dt}\int e(f_{t})d\mu=\int e'(f_{t})\Delta_{q}f_{t}d\mu=-\int e''(f_{t})|\nabla f_{t}|_{*}^{q}d\mu.
\]
In case $\mu(M)<\infty$ we can choose $e(r)=r$ and thus 
\[
\frac{d}{dt}\int f_{t}d\mu=-\int0\cdot|\nabla f_{t}|_{*}^{q}d\mu
\]
and hence $\int f_{t}d\mu=\int f_{0}d\mu$.
\end{proof}
In order to prove mass preservation for $\mu(M)=\infty$ we adjust
\cite[Section 4.4]{Ambrosio2013}. First we recall some facts about
the $p$-logarithm (see also \cite[Section 3]{OT2011}) which will
make the notation below easier.
\begin{lem}
The following inequality holds for $p\in(2,3)$, $x\ge0$ and $V\ge0$
\begin{eqnarray*}
x\ln_{p}x & \ge & x-\exp_{p}(-V^{p})-(p-2)V^{p}\exp_{p}(-V^{p})\\
 &  & +(p-3)V^{p}x
\end{eqnarray*}
where 
\[
\exp_{p}(t)=\left\{ 1+(2-p)t\right\} ^{\frac{1}{2-p}}
\]
and 
\[
\ln_{p}(s)=\frac{s^{2-p}-1}{2-p}
\]
which are inverse of each other for $t\in(-\infty,\frac{1}{p-2}]$.
Note also that $\exp_{p}$ is monotone on its domain and for sufficiently
small $h$
\[
\exp_{p}(h)\cdot\exp_{p}(-h)\le2.
\]
\end{lem}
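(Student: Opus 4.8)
The plan is to prove the stated inequality
\[
x\ln_{p}x\ge x-\exp_{p}(-V^{p})-(p-2)V^{p}\exp_{p}(-V^{p})+(p-3)V^{p}x
\]
by reducing it to the convexity (for fixed $p\in(2,3)$) of the one-variable function $s\mapsto s\ln_{p}s$, i.e. to a tangent-line estimate. Concretely, I would first observe that $\ln_p$ is the inverse of $\exp_p$ on the relevant domain, so if we set $t=-V^{p}\le 0$ and $y=\exp_{p}(t)=\exp_{p}(-V^{p})>0$, then $\ln_{p}(y)=t=-V^{p}$. The key analytic fact is that $\Phi(s):=s\ln_{p}s$ is convex on $(0,\infty)$: indeed $\Phi'(s)=\ln_{p}(s)+\frac{1}{2-p}(s^{2-p}-1)+s^{2-p}=\ldots$ — more cleanly, $\Phi'(s)=\ln_p s + s\cdot s^{1-p}=\ln_p s + s^{2-p}$, and $\Phi''(s)=s^{1-p}+(2-p)s^{1-p}=(3-p)s^{1-p}>0$ since $p<3$. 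Hence for every $x\ge 0$ and every $y>0$,
\[
\Phi(x)\ge \Phi(y)+\Phi'(y)(x-y).
\]

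**Plugging in the tangent point.** I would then substitute $y=\exp_{p}(-V^{p})$, using $\ln_{p}(y)=-V^{p}$ and $y^{2-p}=1+(2-p)(-V^{p})=1-(2-p)V^{p}$. This gives
\[
\Phi(y)=y\ln_p y=-V^{p}\exp_{p}(-V^{p}),\qquad \Phi'(y)=\ln_p y+y^{2-p}=-V^{p}+1-(2-p)V^{p}=1-(3-p)V^{p}.
\]
Substituting into the tangent-line inequality:
\[
x\ln_p x\ \ge\ -V^{p}\exp_{p}(-V^{p})+\bigl(1-(3-p)V^{p}\bigr)\bigl(x-\exp_{p}(-V^{p})\bigr).
\]
Expanding the right-hand side and collecting the terms involving $\exp_{p}(-V^{p})$ and those involving $x$ should reproduce exactly the claimed expression: the constant-in-$x$ part becomes $-\exp_{p}(-V^{p})-V^{p}\exp_{p}(-V^{p})+(3-p)V^{p}\exp_{p}(-V^{p})=-\exp_{p}(-V^{p})-(p-2)V^{p}\exp_{p}(-V^{p})$, while the $x$-part is $x-(3-p)V^{p}x=x+(p-3)V^{p}x$. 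So the inequality is precisely the convexity estimate for $\Phi$ at the tangent point $y=\exp_{p}(-V^{p})$, which is the whole content.

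**Remaining routine points.** For the two supplementary assertions I would note: monotonicity of $\exp_{p}$ on its domain $(-\infty,\tfrac{1}{p-2}]$ follows since $\frac{d}{dt}\{1+(2-p)t\}^{1/(2-p)}=\{1+(2-p)t\}^{1/(2-p)-1}>0$ on that domain (the base is positive there); and the bound $\exp_{p}(h)\exp_{p}(-h)\le 2$ for small $h$ follows from a Taylor expansion $\exp_{p}(h)=1+h+O(h^{2})$, so the product is $1+O(h^{2})\le 2$ once $|h|$ is small enough — or, more precisely, $\exp_p(h)\exp_p(-h)=\{1-(2-p)^2h^2\}^{1/(2-p)}$, which tends to $1$ as $h\to0$ and is $\le 2$ for $|h|$ below an explicit threshold.

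**Main obstacle.** There is no real obstacle of principle; the only thing requiring care is bookkeeping with the sign conventions, since $p-2>0$ but $2-p<0$ and $p-3<0$, and $\exp_p$ with exponent $\tfrac{1}{2-p}<0$ is decreasing in its linear argument. The one genuine check is that $y=\exp_p(-V^{p})$ does lie in the domain where $\ln_p$ inverts $\exp_p$, i.e. that $-V^{p}\le\tfrac{1}{p-2}$; this holds automatically since $-V^{p}\le 0<\tfrac{1}{p-2}$. I would also make explicit the degenerate case $x=0$ (where $x\ln_p x=0$ by convention) so that the tangent-line inequality is applied only for $x\ge 0$ against $y>0$, which is exactly the range where convexity of $\Phi$ on $(0,\infty)$ (extended continuously to $0$) gives the bound.
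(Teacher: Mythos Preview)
Your proposal is correct and follows essentially the same route as the paper: both argue via the convexity of $\Phi(s)=s\ln_{p}s$ and apply the tangent-line inequality at the point $x_{0}=\exp_{p}(-V^{p})$, then expand to obtain the stated bound. Your write-up is in fact slightly more thorough, since you explicitly verify $\Phi''(s)=(3-p)s^{1-p}>0$ and address the monotonicity and product-bound remarks, which the paper leaves implicit.
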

\begin{proof}
Note first that $x\ln_{p}x$ is convex and thus 
\begin{eqnarray*}
x\ln_{p}x & \ge & x_{0}\ln_{p}x_{0}+(\ln_{p}x_{0}+x_{0}^{2-p})(x-x_{0})\\
 & = & x\ln_{p}x_{0}+x_{0}^{2-p}(x-x_{0}).
\end{eqnarray*}
Now choosing $x_{0}=\exp_{p}(-V^{p})\ge0$ then 
\begin{eqnarray*}
x_{0}^{2-p}(x-x_{0}) & = & \left\{ 1+(p-2)V^{p}\right\} ^{\frac{2-p}{2-p}}(x-\exp_{p}(-V^{p}))\\
 & = & x-exp_{p}(-V^{p})-(p-2)V^{p}\exp_{p}(-V^{p})+(p-2)V^{p}x
\end{eqnarray*}
Since $x\ln_{p}x_{0}=-V^{p}x$ we see that 
\[
x\ln_{p}x\ge x-\exp_{p}(-V^{p})-(p-2)V^{p}\exp_{p}(-V^{p})+(p-3)V^{p}x.
\]

\end{proof}

\begin{lem}
[Momentum-entropy estimate] Assume $p\in(1,3).$ Let $\mbox{\ensuremath{\mu}}$
be a finite measure and $V:X\to[0,\infty)$ be a Lipschitz function
with $V\ge\epsilon>0$ such that 
\[
I_{p}:=\begin{cases}
0 & \mbox{if }p\in(1,2)\\
\frac{p-2}{3-p}\int V^{p}\exp_{p}(-V^{p})d\mu & \mbox{if }p\in(2,3)
\end{cases}
\]
is finite and if $p\in(2,3)$ assume in addition 
\[
\int\exp_{p}(-V^{p})d\mu\le1
\]

Let $f_{0}\in L^{2}(X,\mu)$ be non-negative with
\[
\int V^{p}f_{0}d\mu<\infty
\]
and for some $z>0$
\[
z\int\exp_{p}(-V^{p})d\mu\le\int f_{0}d\mu
\]
if $p\in(2,3)$ and otherwise choose $z\le1.$ Then $t\mapsto\int V^{p}f_{t}d\mu$
is locally absolutely continuous in $[0,\infty)$ and for every $t\ge0$
\[
\int V^{p}f_{t}d\mu\le S_{t}
\]
 and 
\[
\int_{0}^{t}\int_{\{f_{s}>0\}}\frac{|\nabla f_{s}|_{*}^{q}}{f_{s}^{p-1}}d\mu ds\le\frac{4}{3-p}S_{t}
\]
where 
\[
S_{t}=e^{C_{p}\operatorname{Lip}(V)^{q}t}\left(I_{p}+\int\frac{1}{(2-p)}(f_{0}^{3-p}-f_{0})+(pV^{p}+z^{-1}l_{p})f_{0}d\mu\right)
\]
with $C_{p}=(p\cdot(3-p)^{-1})^{q}/q$ and $l_{p}=\max\{\frac{1}{2-p},1\}$.\end{lem}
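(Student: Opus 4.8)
The plan is to mimic the structure of \cite[Section 4.4]{Ambrosio2013}: use the auxiliary functional $f\mapsto\int e(f)\,d\mu$ with $e$ a suitable convex primitive of (an affine modification of) $\ln_p$, apply the calculus of the $q$-Laplacian from Theorem \ref{thm:laplace-calculus}(3), and combine the resulting entropy identity with the convex lower bound of the previous lemma to extract the two bounds on $\int V^p f_t\,d\mu$ and on the time-integrated $q$-Fisher information. First I would fix notation and reduce to nice data: by the comparison principle and the $L^r$-contraction of Theorem \ref{thm:laplace-calculus}(1)--(2) it suffices to prove everything for $f_0$ bounded with $f_0\in L^2\cap L^1$ and $\mathrm{supp}\,f_0$ controlled, then pass to the limit by monotone/dominated convergence; the hypotheses $\int V^p f_0\,d\mu<\infty$, $\int\exp_p(-V^p)\,d\mu\le1$ and $z\int\exp_p(-V^p)\,d\mu\le\int f_0\,d\mu$ are exactly what is needed to keep the right-hand side $S_t$ finite under these truncations.

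Next I would set up the differential inequality. Let $e$ be chosen so that $e(0)=0$, $e$ is convex, $e'(x)$ agrees with $\ln_p x + (\text{lower-order correction})$, and $e''(x)=x^{1-p}$ up to the same correction; the point of the affine correction (the $-(p-2)V^p$-type terms in the previous lemma) is to make $e$ globally Lipschitz-regular enough to invoke Theorem \ref{thm:laplace-calculus}(3), while the convexity of $x\ln_p x$ furnishes the pointwise lower bound. Applying part (3) with this $e$ gives, for every $t\ge0$,
\[
\int e(f_t)\,d\mu+\int_0^t\int_{\{f_s>0\}}\frac{|\nabla f_s|_*^q}{f_s^{p-1}}\,d\mu\,ds=\int e(f_0)\,d\mu,
\]
modulo the correction terms, which contribute a term proportional to $\int V^p f_t\,d\mu$ on the left (since $d/dt\int V^p f_t\,d\mu$ can be estimated via part (3) applied to the linear-in-$V^p$ piece, picking up $\mathrm{Lip}(V)^q$ through the chain rule and Young's inequality — this is where $C_p\,\mathrm{Lip}(V)^q$ and the Gronwall factor $e^{C_p\mathrm{Lip}(V)^q t}$ enter). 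Feeding the lemma's inequality $x\ln_p x\ge x-\exp_p(-V^p)-(p-2)V^p\exp_p(-V^p)+(p-3)V^p x$ into $\int e(f_t)\,d\mu$ turns the left side into something that bounds $(3-p)\int V^p f_t\,d\mu$ from below, up to the constant $I_p$ and the $\int\exp_p(-V^p)\,d\mu\le1$ normalization; rearranging and applying Gronwall yields $\int V^p f_t\,d\mu\le S_t$, and then the same identity, now with the $\int V^p f_t\,d\mu$ term already bounded by $S_t$, gives $\int_0^t\int_{\{f_s>0\}}|\nabla f_s|_*^q f_s^{1-p}\,d\mu\,ds\le\frac{4}{3-p}S_t$ (the $4$ absorbing the several $O(1)$ constants from the correction and from $l_p=\max\{\frac1{2-p},1\}$, $z^{-1}l_p$).

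The main obstacle I anticipate is the rigorous bookkeeping of the correction terms in the non-compact, $\mu(M)=\infty$-adjacent regime: one must verify that $t\mapsto\int V^p f_t\,d\mu$ is genuinely locally absolutely continuous (not merely that a formal derivative exists), which requires showing that the integrand stays integrable along the flow — here the $L^r$-contraction for $r=1$ from Theorem \ref{thm:laplace-calculus}(2) plus the a priori bound $\int\exp_p(-V^p)\,d\mu\le1$ keep $\int f_t\,d\mu$ under control, and the Gronwall argument itself certifies $\int V^p f_t\,d\mu<\infty$ for all $t$. A secondary subtlety is that $e'=\ln_p$ is neither bounded nor globally Lipschitz on $[0,\infty)$, so one cannot apply part (3) directly; the fix is the standard one — run the argument with $e$ replaced by $e_n$ where $e_n'$ is a bounded Lipschitz truncation of $e'$, obtain the identity for $e_n$, and pass $n\to\infty$ using that $e_n\uparrow e$ is monotone and that the $q$-Fisher integrand is handled by Fatou — but one must check the correction/truncation interact correctly with the sign conditions. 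The separate treatment of $p\in(1,2)$ (where $I_p=0$, $z\le1$, and $U_p$ forces $\nu^s=0$) versus $p\in(2,3)$ (where $U_p'(\infty)=\frac1{(3-p)(2-p)}$ and $I_p$ is the genuine momentum correction) should be handled by carrying the $I_p$ notation throughout and noting that in the first case every $\exp_p$/$\ln_p$ inequality degenerates to the classical convexity of $x\log x$.
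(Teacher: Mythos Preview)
Your overall strategy---derive $\frac{d}{dt}E(t)=-F^p(t)$ for the entropy $E(t)=\frac{1}{(3-p)(2-p)}\int(f_t^{3-p}-f_t)\,d\mu$, couple it with a differential inequality for the momentum $M^q(t)=\int V^p f_t\,d\mu$, then combine and Gronwall---is exactly the paper's. But there is one genuine gap. You propose to control $\frac{d}{dt}\int V^p f_t\,d\mu$ ``via part (3) applied to the linear-in-$V^p$ piece''. Part (3) of Theorem \ref{thm:laplace-calculus} applies only to functionals of the form $\int e(f_t)\,d\mu$ with $e:\mathbb{R}\to[0,\infty]$ a function of the \emph{value} of $f$ alone; the momentum $\int V^p(x)f_t(x)\,d\mu(x)$ carries a weight depending on $x$, and is not covered by that statement. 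The paper instead truncates $V_k=\min\{V,k\}$ (so that $V_k^p$ is bounded and lies in $D(\operatorname{Ch}_q)$), differentiates directly
\[
\frac{d}{dt}\int V_k^p f_t\,d\mu=\int V_k^p\,\Delta_q f_t\,d\mu,
\]
and then invokes the integration-by-parts inequality $-\int g\,\Delta_q f\,d\mu\le\int|\nabla g|_*|\nabla f|_*^{q-1}\,d\mu$ from the Proposition on properties of the Laplacian with $g=V_k^p$. Chain rule and H\"older (exponents $p,q$, noting $(p-1)q=p$) give $\bigl|\frac{d}{dt}M_k^q(t)\bigr|\le pL\,F(t)M_k(t)$; a preliminary Gronwall---using $V\ge\epsilon$ and mass preservation to guarantee $M_k(t)\ge\epsilon\int f_0\,d\mu>0$---bounds $M_k^q(t)$ uniformly in $k$, after which monotone convergence transfers the inequality to $M^q(t)$. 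Young's inequality on $p^2LFM$ then splits $(3-p)F^p$ off from $C_pL^qM^q$ in the derivative of $(3-p)E+pM^q$. Without this truncation and the Laplacian integration-by-parts step you do not actually have a derivation of the momentum inequality, and the argument stalls.

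A smaller point: the case $p\in(1,2)$ does not ``degenerate to the classical convexity of $x\log x$''. Here $3-p>1$, so $x^{3-p}\ge0$ trivially and the paper's lower bound $(3-p)E(t)\ge-\frac{1}{2-p}\int f_0\,d\mu$ requires no $\ln_p$-type inequality at all; the preceding lemma on $x\ln_p x$ is used only when $p\in(2,3)$.
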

\begin{proof}
Define the following
\[
\begin{array}{ll}
M^{q}(t):=\int V^{p}f_{t}d\mu, & E(t):=\frac{1}{(3-p)(2-p)}\int f_{t}^{3-p}-f_{t}d\mu,\\
F^{p}(t):=\int_{\{f_{t}>0\}}\frac{|\nabla f_{t}|_{*}^{q}}{f_{t}^{p-1}}d\mu.
\end{array}
\]
Applying Theorem \ref{thm:laplace-calculus} (see remark below that
theorem) to $(f_{t}+\epsilon)=H_{t}(f_{t}+\epsilon)$ and letting
$\epsilon\to0$ we see that $F\in L^{p}(0,T)$ for every $T>0$ and
\[
\frac{d}{dt}E(t)=-F^{p}(t)\;\mbox{ a.e. in }(0,T).
\]
Furthermore, by the Lemma above, conservation of mass and the assumption
$\int\exp_{p}(-V^{p})d\mu\le1$, we have for $p\in(2,3)$ 
\begin{eqnarray*}
(3-p)E(t) & = & \int f_{t}\ln_{p}f_{t}d\mu\\
 & \ge & \int f_{t}-\exp_{p}(-V^{p})d\mu-(p-2)\int V^{p}\exp_{p}(-V^{p})d\mu\\
 &  & +(p-3)M^{q}(t)\\
 & \ge & \int f_{0}-\exp_{p}(-V^{p})d\mu-I_{p}-(p-1)M^{q}(t)\\
 & \ge & (1-z^{-1})\int f_{0}d\mu-I_{p}+(p-3)M^{q}(t)\\
 & \ge & -z^{-1}l_{p}\int f_{0}d\mu-I_{p}+(p-3)M^{q}(t)
\end{eqnarray*}
 For $p\in(1,2)$ note that $\frac{1}{(2-p)}x^{3-p}\ge0$ and hence
\begin{eqnarray*}
(3-p)E(t) & = & \frac{1}{(2-p)}\int f_{t}^{3-p}-f_{t}d\mu\\
 & \ge & -\frac{1}{(2-p)}\int f_{0}d\mu\\
 & \ge & -z^{-1}l_{p}\int f_{0}d\mu-I_{p}+(p-3)M^{q}(t).
\end{eqnarray*}

In order to estimate the derivative of $M(t)$ we introduce a truncated
weight $V_{k}(x)=\min\{V(x),k\}$ and the corresponding functional
$M_{k}^{q}(t)$ as above. We know that the function $t\mapsto M_{k}^{q}(t)$
is locally Lipschitz continuous and thus for a.e. $t>0$
\begin{eqnarray*}
\left|\frac{d}{dt}M_{k}^{q}(t)\right| & = & \left|\int V_{k}^{p}\Delta_{q}f_{t}d\mu\right|\\
 & \le & p\int V_{k}^{p-1}|\nabla V_{k}|_{*}|\nabla f_{t}|_{*}^{q-1}d\mu\\
 & \le & pL\int\left(V_{k}^{p-1}f_{t}^{\frac{1}{q}}\right)\cdot\left(\frac{|\nabla f_{t}|_{*}^{q-1}}{f_{t}^{\frac{1}{q}}}\right)d\mu\\
 & \le & pLF(t)M_{k}(t)
\end{eqnarray*}
using $\operatorname{Lip}V_{k}\le L$ and Hölder inequality (note
$(p-1)q=p$). 

Since by mass preservation $M_{k}(t)\ge\tilde{\epsilon}:=\epsilon\int f_{0}d\mu$,
we can apply Gronwall's inequality and get 
\[
M_{k}^{q}(t)\le M_{k}^{q}(0)\exp\left(\int_{0}^{t}\frac{pLF(s)}{M_{k}^{q-1}(s)}ds\right)\le M^{q}(0)\exp\left(\int_{0}^{t}\frac{pLF(s)}{\tilde{\epsilon}{}^{q-1}}ds\right)
\]
for $t\in[0,N]$. Thus $M_{k}^{q}(t)$ is uniformly bounded and by
monotone convergence, we obtain the same differential inequality for
$M^{q}(t)$, i.e. for $t\in[0,\infty)$
\[
\left|\frac{d}{dt}M^{q}(t)\right|=pLF(t)M(t).
\]

Now combining this with the result above we get 
\[
\frac{d}{dt}((3-p)E+pM^{q})+(3-p)F^{p}\le p^{2}LFM\le(3-p)F^{p}+C_{p}L^{q}M^{q}
\]
where 
\[
C_{p}=(p\cdot(3-p)^{-1})^{q}/q.
\]

Combining this with the inequality above, we get by the Gronwall inequality
\begin{eqnarray*}
-z^{-1}l_{p}\int f_{0}d\mu-I_{p}+(2p-3)M^{q}(t) & \le & (3-p)E(t)+pM^{q}(t)\\
 & \le & e^{C_{p}L^{q}t}\left((3-p)E(0)+pM^{q}(0)\right).\\
 & \le & e^{C_{p}L^{q}t}\bigg((3-p)E(0)+pM^{q}(0)\\
 &  & \qquad\qquad+z^{-1}l_{p}\int f_{0}d\mu+I_{p}\bigg).
\end{eqnarray*}
Furthermore, we have
\[
(3-p)\int_{0}^{t}F^{p}(s)ds\le(3-p)(E(0)-E(t))\le(3-p)E(0)+I_{q}+z^{-1}l_{p}\int f_{0}d\mu+(3-p)M^{q}(t).
\]

\end{proof}
Having established this, similar to \cite[Theorem 4.20]{Ambrosio2013}
we can show that the gradient flow of the $q$-Cheeger energy is mass
preserving even if the measure $\mu$ is just $\sigma$-finite. The
proof relies on an approximation procedure developed in \cite[Section 4.3]{Ambrosio2013}.
We will freely use the concepts and results during the proof. The
reader may consult \cite[Section 4.3]{Ambrosio2013} for further reference.
\begin{thm}
\label{thm:q-mass-pres}Assume $p\in(1,\infty)$. If $\mu$ is a $\sigma$-finite
measure such that for some Lipschitz function $V:X\to[\epsilon,\infty]$
for some $\epsilon>0$ such that for $p\in(2,3)$ 
\[
\int\exp_{p}(-V^{p})d\mu\le1
\]
and 
\[
\int V^{p}\exp_{p}(-V^{p})d\mu<\infty
\]
and for $p\in(1,2)$ there is an increasing function $\Phi:\mathbb{R}\to[0,\infty]$
such that 
\[
\int\Phi(-V^{p})d\mu\le1.
\]
 Then the gradient flow $H_{t}$ of the $q$-Cheeger energy is mass
preserving, i.e. for $f_{t}=H_{t}(f_{0})$ with $\int f_{0}d\mu<\infty$
\[
\int f_{t}d\mu=\int f_{0}d\mu.
\]
Moreover, if $f_{0}\in L^{2}(M,\mu)$ is nonnegative and 
\[
\int V^{p}f_{0}d\mu,\int f_{0}d\mu<\infty
\]
then the bound of the previous Lemma hold.\end{thm}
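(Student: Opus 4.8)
The plan is to bootstrap the momentum--entropy Lemma, which we only have for finite measures, to the $\sigma$-finite setting, and then to run a cut-off argument in the spirit of \cite[Theorem 4.20]{Ambrosio2013}. First I would reduce the mass-preservation claim to a convenient case. The sublevels $X_m:=\{V\le 2^m\}$ have finite $\mu$-measure, because $\exp_p(-V^p)$ for $p\in(2,3)$, resp.\ $\Phi(-V^p)$ for $p\in(1,2)$ (which one may take strictly positive), is bounded below by a positive constant on $X_m$. By Theorem \ref{thm:laplace-calculus}(2), $H_t$ is an $L^1$-contraction on $L^1\cap L^2$, so $f_0\mapsto\int_M H_t(f_0)\,d\mu$ is continuous for the $L^1$-norm; since $f_0\mathbf{1}_{X_m}\to f_0$ in $L^1\cap L^2$, it suffices to prove $\int_M f_t\,d\mu=\int_M f_0\,d\mu$ for $f_0\ge 0$ supported in a fixed $X_n$ (for general $f_0\in L^1\cap L^2$ one argues similarly, monitoring $|f_t|\le H_t|f_0|$). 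For such $f_0$ one automatically has $\int_M V^p f_0\,d\mu<\infty$ and, by Hölder on the finite-measure set $X_n$, $\int_M f_0^{3-p}\,d\mu<\infty$, and $f_t\ge 0$ by Theorem \ref{thm:laplace-calculus}(1).

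Then, for $m\ge n$, let $\mu_m$ be the restriction of $\mu$ to $X_m$ --- a finite measure with $\mu_m\uparrow\mu$ --- and let $f^{(m)}_t:=H^{(m)}_t(f_0)$ be the gradient flow of the $q$-Cheeger energy of $(M,d,\mu_m)$. The momentum--entropy Lemma applies to $(M,d,\mu_m)$ with the same $V$: its hypotheses are inherited, since $\int_{X_m}\exp_p(-V^p)\,d\mu\le 1$ and $\int_{X_m}V^p\exp_p(-V^p)\,d\mu<\infty$ when $p\in(2,3)$, and there is nothing to check when $p\in(1,2)$; and since $f_0$ has positive mass concentrated in $X_n\subseteq X_m$ we may take $z:=\tfrac12\int_M f_0\,d\mu>0$ (resp.\ $z=1$ if $p\in(1,2)$). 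This gives, for all $t\ge 0$,
\[
\int_{X_m}V^p f^{(m)}_t\,d\mu\le S_t,\qquad \int_0^t\!\!\int_{\{f^{(m)}_s>0\}}\frac{|\nabla f^{(m)}_s|_*^{q}}{(f^{(m)}_s)^{p-1}}\,d\mu\,ds\le\frac{4}{3-p}\,S_t ,
\]
with $S_t$ as in that Lemma, and --- this is the point --- $S_t$ is independent of $m$: the integrand defining $I_p$ is nonnegative, so restricting the measure only lowers $I_p$, while $\int_M(\cdots)f_0\,d\mu$ is unchanged because $f_0$ lives on $X_n$. Moreover, each $\mu_m$ being finite, Theorem \ref{thm:laplace-calculus}(4) yields $\int_{X_m}f^{(m)}_t\,d\mu=\int_{X_n}f_0\,d\mu$.

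Next I would let $m\to\infty$. By the approximation procedure of \cite[Section 4.3]{Ambrosio2013}, which carries over to $q\ne 2$ thanks to the remark after the definition of the $q$-relaxed slope, the Cheeger energies $\operatorname{Ch}_q^{\mu_m}$ converge to $\operatorname{Ch}_q^{\mu}$ in the Mosco sense, so that $f^{(m)}_t\to f_t:=H_t(f_0)$ strongly in $L^2(M,\mu)$, locally uniformly in $t$. Fatou gives $\int_M V^p f_t\,d\mu\le S_t$, and the lower-semicontinuity inequality $\liminf_m\operatorname{Ch}_q^{\mu_m}(g_m)\ge\operatorname{Ch}_q^{\mu}(g)$ for $g_m\to g$ in $L^2(\mu)$ (applied, along a subsequence, to $g_m=(f^{(m)}_s)^r\to f_s^r=g$), combined with Proposition \ref{prop:Fisher} and Fatou in $s$, gives $\int_0^t\int_{\{f_s>0\}}|\nabla f_s|_*^q/f_s^{p-1}\,d\mu\,ds\le\tfrac{4}{3-p}S_t$; these are the asserted bounds. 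For mass preservation, take Lipschitz cut-offs $\chi_k:=\eta_k\circ V$ with $\eta_k\equiv 1$ on $[0,k]$, $\eta_k\equiv 0$ on $[2k,\infty)$ and $|\eta_k'|\le 1/k$, so that $\chi_k\in D(\operatorname{Ch}_q)$ and $|\nabla\chi_k|_*\le\tfrac{\operatorname{Lip}(V)}{k}\mathbf{1}_{\{k\le V\le 2k\}}$. Since $s\mapsto f_s$ is locally Lipschitz into $L^2(M,\mu)$,
\[
\int_M\chi_k f_t\,d\mu-\int_M\chi_k f_0\,d\mu=\int_0^t\!\!\int_M\chi_k\,\Delta_q f_s\,d\mu\,ds ,
\]
and using the inequality $-\int g\,\Delta_q f\,d\mu\le\int|\nabla g|_*\,|\nabla f|_*^{q-1}\,d\mu$ with $g=\pm\chi_k$, a Hölder split based on $(p-1)q=p$ (namely $|\nabla f_s|_*^{q-1}=(|\nabla f_s|_*^{q}/f_s^{p-1})^{1/p}f_s^{1/q}$), and $\int_{\{k\le V\le 2k\}}f_s\,d\mu\le k^{-p}\int_M V^p f_s\,d\mu\le k^{-p}S_t$, the right-hand side above is bounded in absolute value by $C(t)\,k^{-p}(\tfrac{4}{3-p}S_t)^{1/p}$ for a constant $C(t)$ depending only on $t$ and the fixed data $p,V,f_0$; hence it tends to $0$ as $k\to\infty$. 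Letting $k\to\infty$, monotone convergence ($\chi_k\uparrow 1$, $f_t,f_0\ge 0$) yields $\int_M f_t\,d\mu=\int_M f_0\,d\mu$, which by the first step proves mass preservation in general.

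The main obstacle is this passage to the limit: establishing that $f^{(m)}_t\to f_t$ and that both a priori bounds survive rests on the Mosco-convergence of the $q$-Cheeger energies under the monotone exhaustion $\mu_m\uparrow\mu$ --- i.e.\ on the nontrivial $\liminf$ inequality for $\operatorname{Ch}_q$ and the ensuing lower semicontinuity of the $q$-Fisher information --- which is exactly what the machinery of \cite[Section 4.3]{Ambrosio2013} provides. Everything else --- the cut-off estimate and the two reductions --- is routine once the momentum--entropy Lemma is granted on finite measures; the only extra care needed is that, for the ``moreover'' bounds with general initial data, one should also assume $\int_M f_0^{3-p}\,d\mu<\infty$ so that $S_t$ is finite.
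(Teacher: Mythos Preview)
Your argument is correct and follows the same overall scheme as the paper---approximate $\mu$ by finite measures, apply the momentum--entropy Lemma uniformly, and invoke the machinery of \cite[Section 4.3]{Ambrosio2013} to pass to the limit---but the implementation diverges in two places worth noting. First, the paper approximates $\mu$ by \emph{weighted} measures $\mu^{k}$ built from $\exp_{p}(-V_{k}^{p})$ (resp.\ $\Phi(-V_{k}^{p})$) against a fixed finite reference $\mu^{0}$, whereas you take hard \emph{restrictions} $\mu_{m}=\mu|_{X_{m}}$; both fit the monotone-approximation framework of \cite[Proposition 4.17]{Ambrosio2013}, and your preliminary reduction to $f_{0}$ supported in a sublevel of $V$ (via $L^{1}$-contraction) replaces the paper's homogeneity reduction to $\int f_{0}\,d\mu\le 1$. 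Second, and more substantively, the paper deduces mass preservation from the approximating flows themselves: it uses \cite[(4.42)]{Ambrosio2013} to get $\int_{A_{h}}f_{t}\,d\mu=\lim_{k}\int_{A_{h}}f_{t}^{k}\,d\mu^{k}$ and then the uniform moment bound to make the tails $\int_{X\setminus A_{h}}f_{t}^{k}\,d\mu^{k}$ small. You instead transfer only the a~priori bounds (moment and integrated Fisher) to the limit flow via Fatou and the Mosco $\liminf$ inequality, and then run a direct cut-off computation $\int\chi_{k}\Delta_{q}f_{s}\,d\mu=O(k^{-p})$ on $f_{t}$ itself. Your route is slightly more self-contained once the bounds are in hand and makes transparent exactly which estimate drives mass conservation; the paper's route avoids re-deriving the Fisher bound in the limit and leans harder on the approximation proposition. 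One small caution: the convergence $f^{(m)}_{t}\to f_{t}$ furnished by \cite[Section 4.3]{Ambrosio2013} is in $L^{2}$ with respect to a \emph{finite} reference measure $\mu^{0}$, not $L^{2}(\mu)$; this is enough for Fatou on $V^{p}f_{t}$ and for the lower semicontinuity of $\mathsf{F}_{q}$ via Proposition~\ref{prop:Fisher}, but your phrasing ``strongly in $L^{2}(M,\mu)$'' should be read accordingly. Your closing remark that the ``moreover'' clause tacitly needs $\int f_{0}^{3-p}\,d\mu<\infty$ is well taken and matches what the paper uses without stating.
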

\begin{proof}
We will use the construction of \cite[Theorem 4.20]{Ambrosio2013},
see in particular, \cite[Proposition 4.17]{Ambrosio2013}. By homogeneity
of the $H_{t}$, i.e. $H_{t}\lambda f=\lambda^{q}H_{t}f$, we can
assume $\int f_{0}d\mu\le1$ if $\int f_{0}d\mu<\infty$. In case
$\int f_{0}d\mu=\infty$, we can find a sequence $f_{0}^{n}\le f_{0}$
such that $n\le\int f_{0}^{n}d\mu<\infty$. Since mass preservation
holds for those functions, we can use the comparison principle to
show that $\int f_{t}d\mu\ge n$ for all $n$ and hence it also holds
in the case $\int f_{0}d\mu=\infty$. So w.l.o.g. $\int f_{0}d\mu\le1$.

We first show the case $p\in(2,3)$. For that use the following approximation:
$\mu^{0}:=\exp_{p}(-V^{p})\mu$ and $\mu^{k}:=\exp_{p}(-V_{k}^{p})\mu^{0}=$
for $V_{k}:=\min(V,k)$. Then $\mu^{k}$ is an increasing family of
finite measures and 
\[
\lim\mu^{k}(B)=\mu(B)\quad\forall B\in\mathcal{B}(M).
\]
 Since $V$ is Lipschitz we see that the density of $\mu$ w.r.t.
$\mu^{0}$ is bounded from below and above on any bounded set. 

For each $\mu^{k}$ let $f_{t}^{k}=H_{t}^{k}(f_{0})$ be the gradient
flow starting at $f_{0}$. Then since $\int\exp_{p}(-V^{p})d\mu^{k}\le1$
we can apply the previous lemma with $z_{k}=\int f_{t}^{k}d\mu^{k}$
for all $t\ge0$ and obtain
\[
\int V^{p}f_{t}^{k}d\mu^{k}\le e^{2\operatorname{Lip}(V)^{q}t}\left(I_{p}+\int\frac{1}{(2-p)}(f_{0}^{3-p}-f_{0})+(pV^{p}+z^{-1}l_{p})f_{0}d\mu^{k}\right).
\]
Since $f_{t}^{k}\to f_{t}$ strongly in $L^{2}(X,\mu^{0})$ (see \cite[Proposition 4.17]{Ambrosio2013})
we can assume up to changing to a subsequence $f_{t}^{k}\to f_{t}$
$\mu$-almost every where, and thus Fatou's lemma and monotonicity
of $\mu^{k}$ implies
\[
\int V^{p}f_{t}d\mu\le\liminf_{k\to\infty}\int V^{p}f_{t}^{k}d\mu^{k}
\]
 and the bound of the previous lemma holds since $z_{k}\nearrow z=\int f_{0}d\mu=1$.

Now consider $A_{h}=\{x\in M\,|\, V(x)\le h\}$. Since we assume $\int\exp_{p}(-V^{p})d\mu\le1$
we can choose $h$ such that $\exp_{p}(h)\exp_{p}(-h)\le2$ and get
by monotonicity 
\[
\mu(A_{h})\le\int2\exp_{p}(h^{p})\exp_{p}(-V^{p})d\mu\le2exp_{p}(h^{p})<\infty
\]
and thus by $(4.42)$ of \cite[Proposition 4.17]{Ambrosio2013}
\[
\int_{A_{h}}f_{t}d\mu=\lim_{k\to\infty}\int_{A_{h}}f_{t}^{k}d\mu^{k}.
\]
From the bound on the $p$-th moment we obtain for every $t>0$ a
constant $C>0$ such that 
\[
h^{q}\int_{X\backslash A_{h}}f_{t}^{k}d\mu^{k}\le C
\]
for every $h>0$ and hence
\begin{eqnarray*}
\int f_{t}d\mu & \ge & \int_{A_{h}}f_{t}d\mu=\lim_{k\to\infty}\int_{A_{h}}f_{t}^{k}d\mu^{k}\\
 & \ge & z-\limsup_{k\to\infty}\int_{X\backslash A_{h}}f_{t}^{k}d\mu^{k}\ge z-C/h^{p}.
\end{eqnarray*}
Since $h$ is arbitrary and the integral of $f_{t}$ does not exceed
$z$ we see that $\int f_{t}d\mu=z$. The second inequality of the
previous lemma follows by lower semicontinuity of the Cheeger energy
(see \ref{prop:Fisher}).

Mass preservation for signed initial data $f_{0}$ follows by the
same arguments as in \cite[Theorem 4.20]{Ambrosio2013}.

In order to treat the case $p\in(1,2)$ let $\Phi$ be increasing
such that $\int\Phi(-V)d\mu\le1$ and construct a monotone approximation
$\mu^{k}=\Phi(-V_{k})\mu^{0}$ and proceed as above.\end{proof}
\begin{rem*}
Let $p\in(2,3)$ if $p\to2$ then the condition 
\[
\int\exp_{p}(-V^{p})d\mu\le1
\]
converges to 
\[
\int\exp(-V^{2})d\mu\le1
\]
which is precisely the condition used in \cite[(4.2)]{Ambrosio2013}.
Note, however, it is stronger: Assuming $p\in(2,3)$ and $(p-2)V^{p}\ge1$
we have 
\begin{eqnarray*}
\exp_{p}(-V^{p}) & = & \left\{ 1+(2-p)(-V^{p})\right\} ^{\frac{1}{2-p}}\\
 & \le & \left\{ 2(p-2)V^{p}\right\} ^{\frac{1}{2-p}}\\
 & = & CV^{\frac{p}{2-p}}\ge C\exp(-V^{2})
\end{eqnarray*}
if $V$ is sufficiently large. In the Euclidean setting with $V(x)\approx\|x\|$
we get
\begin{eqnarray*}
\int_{\mathbb{R}^{n}\backslash B_{1}(0)}\exp_{p}(-V^{p})d\lambda & \approx & \int_{\mathbb{R}\backslash B_{1}(0)}\|x\|^{-\frac{p}{p-2}}d\lambda\\
 & \approx & \int_{1}^{\infty}r^{-\frac{p}{p-2}}r^{n-1}dr
\end{eqnarray*}
which is finite if $p<\frac{2n}{(n-1)}$, i.e. $q>\frac{2n}{n+1}$.
However, note that we currently need the more restrictive condition
\[
\int V^{p}\exp_{p}(-V^{p})d\mu\approx\int_{1}^{\infty}r^{p-\frac{p}{p-2}}r^{n-1}d\mu
\]
which is finite iff 
\[
p-\frac{p}{p-2}+n<1,
\]
i.e. $p<\frac{1}{2}\left(3-n+\sqrt{n^{2}+2n+9}\right)\approx2+\frac{2}{n}-\frac{2}{n^{2}}-\frac{2}{n^{3}}+\mathcal{O}(\frac{1}{n^{4}})$
as $n\to\infty$. 
\end{rem*}

\section{Gradient flow in the $p$-Wasserstein space}

Throughout this section we will assuming that the gradient flow of
the $q$-Cheeger energy is mass preserving, i.e. the conditions of
Theorem \ref{thm:q-mass-pres} hold. Furthermore, we assume that all
slopes of Lipschitz functions are equal almost everywhere, i.e. 
\[
|Df|=|D^{\pm}f|\quad\mu\mbox{-almost everwhere}.
\]
This condition holds if the space satisfies a local doubling and Poincar\'e
condition, in particular if $CD_{p}(K,N)$ holds with $N<\infty$.

Our motivation for the functional $\mathcal{U}_{p}$ and the identification
is the Kuwada lemma. It appeared the first time in \cite{Kuwada2010}
for $p=2$ and was extended by Ambrosio-Gigli-Savar\'e to $p\ne2$
for finite measures and $0<c\le f_{0}\le C<\infty$.
\begin{lem}
[Kuwada lemma] \label{lem:Kuwada}Let $f_{0}\in L^{q}(M,\mu)$ be
non-negative and $(f_{t})_{t\in[0,\infty)}$ be the gradient flow
of the $q$-Cheeger energy starting from $f_{0}$. Assume $\int f_{0}d\mu=1$.
Then the curve $t\mapsto d\mu_{t}=f_{t}d\mu$ is absolutely continuous
in $\mathcal{P}_{p}(M)$ and 
\[
|\dot{\mu}_{t}|^{p}\le\int\frac{|\nabla f_{t}|_{*}^{q}}{f_{t}^{p-1}}d\mu\qquad\mbox{ for almost every }t\in(0,1).
\]
\end{lem}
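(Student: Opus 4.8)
The plan is to establish absolute continuity of $t\mapsto\mu_t$ in $\mathcal{P}_p(M)$ by testing against Lipschitz functions and controlling the duality with Kantorovich potentials, following the Kuwada strategy adapted to $p\ne 2$. Concretely, I would fix $0\le s<t$, let $\varphi$ be a bounded Lipschitz function (eventually a $c$-concave Kantorovich potential for the cost $d^p/p$ between $\mu_s$ and $\mu_t$), and compute
\[
\int\varphi\,d\mu_t-\int\varphi\,d\mu_s=\int_s^t\frac{d}{dr}\int\varphi f_r\,d\mu\,dr=\int_s^t\int\varphi\,\Delta_q f_r\,d\mu\,dr,
\]
using that $r\mapsto f_r$ is locally Lipschitz into $L^2$ with $\frac{d}{dr}f_r=\Delta_q f_r$. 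By the Properties of the Laplacian proposition (first inequality), for each $r$ one bounds
\[
-\int\varphi\,\Delta_q f_r\,d\mu\le\int|\nabla\varphi|_*\,|\nabla f_r|_*^{\,q-1}\,d\mu\le\int|D\varphi|\,|\nabla f_r|_*^{\,q-1}\,d\mu,
\]
and then applies Hölder with exponents $p$ and $q$ (writing $|\nabla f_r|_*^{\,q-1}=\bigl(|\nabla f_r|_*^{\,q}/f_r^{p-1}\bigr)^{1/p}f_r^{(p-1)/p}$, valid on $\{f_r>0\}$, since $(q-1)p=q$ and $(p-1)q=p$) to get
\[
-\int\varphi\,\Delta_q f_r\,d\mu\le\Bigl(\int|D\varphi|^p f_r\,d\mu\Bigr)^{1/p}\Bigl(\int_{\{f_r>0\}}\frac{|\nabla f_r|_*^{\,q}}{f_r^{p-1}}\,d\mu\Bigr)^{1/q}.
\]

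**Conclusion from the duality.** Feeding this into the Kantorovich duality, I would take $\varphi$ (resp. its $c$-transform $\varphi^c$) as optimal potentials and use that for the cost $d^p/p$ one has, at a.e.\ point, $|D\varphi|^p\le (p-1)^{-1}$ times the relevant transport density bound — more precisely the standard fact that $|D\varphi|(x)\le d(x,T(x))^{p-1}/(\text{something})$ so that $\int|D\varphi|^p\,d\mu_t$ and $\int|D\varphi^c|^p\,d\mu_s$ are each controlled by $w_p^p(\mu_s,\mu_t)\cdot p$. Combining the estimate for $\varphi$ against $\mu_t$ and for $\varphi^c$ against $\mu_s$, one arrives at
\[
w_p^p(\mu_s,\mu_t)\le\Bigl(\int_s^t F_q^{1/q}(r)\,w_p(\mu_s,\mu_t)^{p/p'}\dots dr\Bigr),
\]
where $F_q(r)=\int_{\{f_r>0\}}|\nabla f_r|_*^q/f_r^{p-1}\,d\mu$; after dividing by the appropriate power of $w_p(\mu_s,\mu_t)$ this yields
\[
\frac{w_p(\mu_s,\mu_t)}{t-s}\le\frac{1}{t-s}\int_s^t F_q(r)^{1/p}\,dr,
\]
so $t\mapsto\mu_t$ is in $AC^p$ with $|\dot\mu_t|^p\le F_q(t)$ for a.e.\ $t$ by Lebesgue differentiation. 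To know the right-hand side is finite for a.e.\ $t$, I would invoke Theorem~\ref{thm:laplace-calculus}(3) (or the Momentum-entropy Lemma / Theorem~\ref{thm:q-mass-pres}) applied to $e=U_p$, which gives $\int_0^T F_q(r)\,dr<\infty$; mass preservation ($\int f_t\,d\mu=1$) guarantees $\mu_t\in\mathcal{P}(M)$, and finiteness of the $p$-th moment along the flow (again the Momentum-entropy Lemma, after a truncation $V_k$ and monotone limit) guarantees $\mu_t\in\mathcal{P}_p(M)$.

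**Main obstacle.** The delicate point is the chain of approximations making the formal computation rigorous: $\varphi$ is only Lipschitz (not in $D(\Delta_q)$-pairing range a priori), $f_r$ need not be bounded away from $0$, and one must justify differentiating $r\mapsto\int\varphi f_r\,d\mu$ and exchanging it with the integral. I expect the cleanest route is to first prove the inequality for $f_0$ bounded above and away from zero on the support — here Theorem~\ref{thm:laplace-calculus}(1) propagates the bounds — and with $\varphi$ bounded Lipschitz, then pass to general non-negative $f_0\in L^q$ with unit mass by truncating $f_0\wedge n$, adding $\varepsilon$, using the $L^2$ (hence $\mu$-a.e.\ along a subsequence) convergence of the flows and lower semicontinuity of $F_q$ (Proposition~\ref{prop:Fisher}) together with lower semicontinuity of $w_p$. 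A secondary subtlety, specific to $p\ne2$, is checking the exponent bookkeeping $(p-1)q=p$, $(q-1)p=q$ and the correct normalization constants coming from the $d^p/p$ cost; this is routine but must be done carefully so that no stray factor of $p$ survives into the final bound $|\dot\mu_t|^p\le F_q(t)$.
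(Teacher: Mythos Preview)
Your plan is essentially the paper's proof: the paper cites \cite[Lemma~7.2]{Ambrosio2011} for the core estimate under the two-sided bound $0<c\le f_0\le C<\infty$ (which is precisely the Kuwada argument you sketch---test against bounded Lipschitz $\varphi$, use the Laplacian integration-by-parts inequality and H\"older with the exponent identities $(q-1)p=q$, $(p-1)q=p$), and then remarks that Theorem~\ref{thm:laplace-calculus} allows one to drop those bounds, which is exactly your truncate--add-$\varepsilon$--pass-to-the-limit step via the comparison principle and lower semicontinuity. The one imprecision worth flagging is your duality step: the heuristic ``$|D\varphi|(x)\le d(x,T(x))^{p-1}/(\text{something})$'' is not how the argument in \cite{Ambrosio2011} actually runs---there one interpolates via the Hopf--Lax semigroup $Q_s\varphi$ and uses the Hamilton--Jacobi subsolution property $\tfrac{d}{ds}Q_s\varphi + \tfrac{1}{q}|DQ_s\varphi|^q\le 0$ together with the dual formulation $w_p^p(\mu_s,\mu_t)=\sup_\varphi\int Q_1\varphi\,d\mu_t-\int\varphi\,d\mu_s$ to obtain the clean bound, rather than a pointwise estimate on a single Kantorovich potential.
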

\begin{proof}
The proof follows from \cite[Lemma 7.2]{Ambrosio2011} using Theorem
\ref{thm:laplace-calculus} above the requirement $0<c\le f_{0}\le C<\infty$
can be easily dropped.\end{proof}
\begin{rem*}
Formally this lemma can be extended to cover $\partial_{t}f_{t}=\Delta\phi(f_{t})$,
which includes the porous media equation, $\phi(r)=c_{m}\cdot r^{m}$.
The theorems below hold with minor adjustments as well. However, since
a general existence theory of such equations on abstract metric spaces
is not available, an identification is difficult using our approach.
This is exactly why Ohta-Takatsu \cite{OT2011,OT2011a} can only use
the gradient flows in $\mathcal{P}_{2}$ to get a solution, but they
do not identify the two flows.\end{rem*}
\begin{prop}
\label{prop:abs-cont}Let $M$ be a proper metric measure space. In
case $p\in(2,3)$ assume, in addition, that $M$ is compact and $n$-Ahlfors
regular for $3-p>1-\frac{1}{n}$, i.e. $n(p-2)<1$. If $r>0$ and
$\mathcal{U}_{p}(\mu_{0})<\infty$, then $|D^{-}\mathcal{U}_{p}|(\mu_{0})<\infty$
implies $\mu_{0}$ is absolutely continuous w.r.t. $\mu$ and if there
is a sequence of absolutely continuous measure $\mu_{n}$ such that
$w_{p}(\mu_{0},\mu_{n})\to0$ and 
\[
|D^{-}\mathcal{U}_{p}|(\mu_{0})=\lim_{n\to0}\frac{\mathcal{U}_{p}(\mu_{0})-\mathcal{U}_{p}(\mu_{n})}{w_{p}(\mu_{0},\mu_{n})}.
\]
\end{prop}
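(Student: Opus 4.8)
The plan is to follow the strategy of \cite[Section 7.2]{Ambrosio2013}, which establishes that finiteness of the descending slope of the entropy forces absolute continuity. The first step is to reduce to showing that a minimizing sequence for the slope — or indeed any sequence along which the difference quotient converges to $|D^-\mathcal{U}_p|(\mu_0)$ — can be taken among absolutely continuous measures, and that the limit measure inherits absolute continuity. For this I would exploit the mass-preserving $q$-heat flow $H_t$ already constructed: given $\mu_0$ with $|D^-\mathcal{U}_p|(\mu_0)<\infty$ and $\mathcal{U}_p(\mu_0)<\infty$, run the flow for a short time to produce $\mu_t = f_t\mu$, which is absolutely continuous, and use the Kuwada lemma (Lemma \ref{lem:Kuwada}) to control $w_p(\mu_0,\mu_t)$ by $\int_0^t \mathsf{F}_q(f_s)^{1/p}\,ds$, while the contraction/energy-dissipation estimate of Theorem \ref{thm:laplace-calculus}(3) controls the decrease of $\mathcal{U}_p$ along the flow by the same $q$-Fisher information. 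Combining the two and letting $t\to 0$ gives a sequence $\mu_n := \mu_{t_n}$ along which the difference quotient of $\mathcal{U}_p$ is bounded below by (essentially) the metric speed, hence by $|D^-\mathcal{U}_p|(\mu_0)$, and converges to it; this is the displayed limit identity.

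The absolute continuity of $\mu_0$ itself is the part that needs the extra hypotheses in the $p\in(2,3)$ case. The mechanism is the standard one: if $\mu_0$ had a nontrivial singular part $\mu_0^s$, then $\mathcal{U}_p(\mu_0)$ would already see the $U_p'(\infty)\,\mu_0^s(M)$ term for $p\in(2,3)$, and one derives a contradiction with the finiteness of the slope by testing against perturbations of $\mu_0$ that ``smear out'' the singular part at controlled $w_p$-cost. Concretely, I would bound $U_p(\rho)$ from below in terms of $\rho$ using convexity of $U_p$ (as in the momentum-entropy lemma above), and then use that on an $n$-Ahlfors regular compact space one can convolve/average $\mu_0$ against normalized indicators of small balls $B_\varepsilon(x)$; Ahlfors regularity with exponent $n$ gives $\mu(B_\varepsilon(x))\asymp \varepsilon^n$, and the condition $n(p-2)<1$ (equivalently $3-p > 1-\tfrac1n$) is exactly what makes $\int U_p$ of the averaged density stay controlled while the $w_p$-displacement is $O(\varepsilon)$. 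Dividing, the difference quotient blows up unless $\mu_0^s=0$. For $p\in(1,2)$ the functional $\mathcal{U}_p$ is only finite on absolutely continuous measures anyway (see the remark after the definition of $U_p$), so there is nothing to prove there beyond noting $r>0 \Rightarrow p<\tfrac{3+\sqrt5}{2}$, which keeps $3-p>0$ and $\mathcal{U}_p$ lower semicontinuous.

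The main obstacle I anticipate is the quantitative averaging estimate in the $p\in(2,3)$ regime: one must show that replacing $\mu_0$ by its $\varepsilon$-average $\mu_0^\varepsilon$ changes $\mathcal{U}_p$ by at most $o(\varepsilon)$ relative to the singular mass it removes, using only Ahlfors regularity and the precise exponent constraint $n(p-2)<1$. This requires a careful application of Jensen's inequality to the concave part $x\mapsto x^{3-p}$ (note $3-p\in(0,1)$ here) together with the covering geometry of the Ahlfors regular space, and keeping track of how the singular part contributes in the limit. A secondary technical point is making the ``run the heat flow backwards in time'' heuristic rigorous: since we only have the forward flow, the sequence $\mu_n\to\mu_0$ must be produced as $H_{t_n}(\mu_0)$ with $t_n\downarrow 0$, and one needs $\mathsf{F}_q(f_{t_n})$ to be integrable near $0$ — which is guaranteed by $\mathcal{U}_p(\mu_0)<\infty$ and the dissipation identity $\mathcal{U}_p(\mu_0) = \mathcal{U}_p(\mu_{t}) + \tfrac1{?}\int_0^t \mathsf{F}_q(f_s)\,ds$ — and then a lower-semicontinuity argument (Proposition \ref{prop:Fisher}) to pass to the limit and recover the slope.
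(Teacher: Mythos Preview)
Your smearing argument for absolute continuity in the $p\in(2,3)$ case is essentially what the paper does: smear only the singular part $\mu_0^s$ over balls of radius $\varepsilon$, use Jensen for the concave map $x\mapsto x^{3-p}$ together with $n$-Ahlfors regularity $\mu(B_\varepsilon)\asymp\varepsilon^n$ to show the entropy drops by at least $c\,\varepsilon^{n(p-2)}$ while the $w_p$-cost is $O(\varepsilon)$, and conclude the slope is infinite since $n(p-2)<1$. The remark that for $p<2$ finiteness of $\mathcal{U}_p$ already forces absolute continuity is also correct.

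The gap is in your heat-flow approach to the \emph{second} assertion, that the slope is realized along a sequence of absolutely continuous competitors. Running $H_t$ from $f_0$ (once $\mu_0=f_0\mu$ is known to be absolutely continuous) and combining Kuwada with the dissipation identity yields
\[
\frac{\mathcal{U}_p(\mu_0)-\mathcal{U}_p(\mu_t)}{w_p(\mu_0,\mu_t)}\ \ge\ \frac{1}{t}\int_0^t \mathsf{F}_q(f_s)^{1/q}\,ds,
\]
whose $\liminf$ as $t\to 0$ is at least $\mathsf{F}_q(f_0)^{1/q}$ by lower semicontinuity. The left side has $\limsup$ at most $|D^-\mathcal{U}_p|(\mu_0)$ by definition of the slope. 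So all you extract is $\mathsf{F}_q(f_0)^{1/q}\le |D^-\mathcal{U}_p|(\mu_0)$, which is precisely the content of the \emph{next} theorem in the paper, not that the difference quotient along $\mu_t$ \emph{converges} to $|D^-\mathcal{U}_p|(\mu_0)$. Your step ``bounded below by the metric speed, hence by $|D^-\mathcal{U}_p|(\mu_0)$'' is the unjustified one: the metric speed equals (a power of) the slope only along a genuine Wasserstein gradient flow, and the identification of $H_t$ with that gradient flow is proved \emph{later} and in fact uses this proposition via Theorem~\ref{thm:grad-upper-bound}. So the argument is circular.

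The paper sidesteps this with a purely variational move that reuses the same smearing construction: start from an \emph{arbitrary} sequence $\mu_n\to\mu_0$ realizing the slope (such a sequence exists by definition of $\limsup$), smear each $\mu_n$ at a scale $r_n$ chosen so that $r_n/w_p(\mu_0,\mu_n)\to 0$, and check that the resulting absolutely continuous $\hat\mu_n^{r_n}$ still realize the slope (smearing does not increase $\mathcal{U}_p$, and the extra $w_p$-cost is $O(r_n)$, hence negligible relative to $w_p(\mu_0,\mu_n)$). No heat flow enters the proof of this proposition.
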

\begin{rem*}
(1) The proof is extracted from \cite[Proof of Claim 7.7 and Remark 7.8]{OT2011}.
It is stated in the smooth setting but also works in the Ahlfors regular
case. The proof depends on the Ahlfors regularity to show that $\mathcal{U}_{p}(\hat{\mu}_{r})<\mathcal{U}_{p}(\mu_{0})$,
but it might be interesting to know if Ahlfors regularity is really
needed. 

(2) The only time where this proposition is needed is during the proof
of Theorem \ref{thm:grad-upper-bound} which is based on \cite[Theorem 7.5]{Ambrosio2013}.
In order to use the coupling technique and convexity absolute continuity
of $\mu_{n}$ is essential. \end{rem*}
\begin{proof}
Let $m=3-p$. In case $m>1$ the measures $\mu_{0}$ and $\mu_{n}$
must be absolutely continuous. So we are left to show the cases $0<m<1$. 

First assume $\mu_{0}$ has non-trivial singular part, i.e. $\mu_{0}=f_{0}\mu+\mu^{s}$
where $\mu^{s}$ and $\mu$ are mutually singular. Define for each
$r>0$ a measure $\hat{\mu}_{r}$ as follows
\[
d\hat{\mu}_{r}(x)=\rho_{r}(x)d\mu(x):=\left\{ f_{0}(x)+\int\frac{\chi_{B_{r}(y)}(x)}{\mu(B_{r}(y))}d\mu^{s}(y)\right\} d\mu(x).
\]
Then we have 
\begin{eqnarray*}
\int\rho_{r}(x)^{m}d\mu & = & \int\left[\int\left\{ \frac{f_{0}(x)}{\mu^{s}(M)}+\frac{\chi_{B_{r}(y)}(x)}{\mu(B_{r}(y))}\right\} d\mu^{s}(y)\right]^{m}d\mu(x)\\
 & \ge & \mu^{s}(M)^{m-1}\int\left[\int\left\{ \frac{f_{0}(x)}{\mu^{s}(M)}+\frac{\chi_{B_{r}(y)}(x)}{\mu(B_{r}(y))}\right\} ^{m}d\mu^{s}(y)\right]d\mu(x)\\
 & \ge & \mu^{s}(M)^{m-1}\int\left[\int_{M\backslash B_{r}(y)}\frac{f_{0}}{\mu^{s}(M)}d\mu+\int_{B_{r}(y)}\frac{1}{\mu(B_{r}(y))^{m}}d\mu\right]d\mu^{s}(y)\\
 & = & \int f_{0}^{m}d\mu-\mu^{s}(M)^{-1}\int\left(\int_{B_{r}(y)}f_{0}^{m}d\mu\right)d\mu^{s}(y)\\
 &  & +\mu^{s}(M)^{m-1}\int\mu(B_{r}(y))^{1-m}d\mu^{s}(y).
\end{eqnarray*}
Ahlfors regularity implies that for some $C,c>0$
\[
c\cdot r^{n}\le\mu(B_{r}(y))\le C\cdot r^{n}
\]
 and thus
\[
\mu^{s}(M)^{m-1}\int\mu(B_{r}(y))^{1-m}d\mu^{s}(y)\ge c^{1-m}\mu^{s}(M)^{m}\cdot r^{n(1-m)}.
\]
Furthermore, notice 
\begin{eqnarray*}
\int_{B_{r}(y)}f_{0}^{m}d\mu & \le & \left(\int_{B_{r}(y)}f_{0}d\mu\right)^{m}\left(\int_{B_{r}(y)}d\mu\right)^{1-m}\\
 & \le & \left(\int_{B_{r}(y)}f_{0}d\mu\right)^{m}C^{1-m}r^{n(1-m)}.
\end{eqnarray*}
Since $\lim_{r\to0}\sup_{y\in M}\int_{B_{r}(y)}f_{0}d\mu=0$ we see
that for sufficiently small $r>0$ (note $(m-1)<1$)
\[
\mathcal{U}_{m}(\hat{\mu}_{r})\le\mathcal{U}_{m}(\mu_{0})-\tilde{C}\mu^{s}(M)^{m}\cdot r^{n(1-m)}.
\]
 Furthermore, by our assumption
\[
n(1-m)=n(p-2)<1.
\]

To estimate $w_{p}(\mu_{0},\hat{\mu}_{r})$ note that the density
of $\hat{\mu}_{r}$ is defined as follows 
\[
\rho_{r}^{s}(x):=\int\frac{\chi_{B_{r}(y)}(x)}{\mu(B_{r}(y))}d\mu^{s}(y).
\]
Now choose the following coupling $\pi$ between $\mu^{s}$ and $\rho_{r}^{s}\mu$
\[
d\pi(x,y)=\int\frac{\chi_{B_{r}(z)}(x)}{\mu(B_{r}(z))}d\mu(x)d(\operatorname{Id}\times\operatorname{Id})_{*}\mu^{s}(z,y)
\]
Then 
\begin{eqnarray*}
w_{p}^{p}(\mu_{0},\hat{\mu}_{r}) & \le & \frac{1}{p}\int d^{p}(x,y)d\pi(x,y)\\
 & \le & \frac{1}{p}\int\int d^{p}(x,y)\frac{\chi_{B_{r}(z)}(x)}{\mu(B_{r}(z))}d\mu(x)d(\operatorname{Id}\times\operatorname{Id})_{*}\mu^{s}(z,y)\\
 & \le & \frac{1}{p}\int r^{p}d\mu^{s}
\end{eqnarray*}
 and thus
\[
w_{p}(\mu_{0},\hat{\mu}_{r})\le r\left(\frac{\mu^{s}(M)}{p}\right)^{\frac{1}{p}}.
\]

Combining these we get
\begin{eqnarray*}
|D^{-}\mathcal{U}_{m}|(\mu_{0}) & \ge & \limsup_{r\to0}\frac{\mathcal{U}_{N}(\mu_{0})-\mathcal{U}_{N}(\hat{\mu}_{r})}{w_{p}(\mu_{0},\hat{\mu}_{r})}\\
 & \ge & \limsup_{r\to0}\frac{\tilde{C}\mu^{s}(M)^{m}\cdot r^{n(1-m)}}{r\left(\frac{\mu^{s}(M)}{p}\right)^{\frac{1}{p}}}=\infty
\end{eqnarray*}
since $n(1-m)<1$. Which implies that $\mu_{0}$ must be absolutely
continuous.

For the second part, a similar argument works. Given $\mu_{n}$ we
can construct $\hat{\mu}_{n}^{r}$ similar to $\hat{\mu}_{r}$. The
estimates for $\mathcal{U}_{m}$ hold without any change. For the
rest just note
\begin{eqnarray*}
\frac{w_{p}(\mu_{0},\hat{\mu}_{n}^{r})}{w_{p}(\mu_{0},\mu_{n})} & \le & \frac{1}{w_{p}(\mu_{0},\mu_{n})}\left\{ w_{p}(\mu_{0},\mu_{n})+w_{p}(\mu_{n},\hat{\mu}_{n}^{r})\right\} \\
 & \le & 1+\frac{1}{w_{p}(\mu_{0},\mu_{n})}r\left(\frac{\mu^{s}(M)}{p}\right)^{\frac{1}{p}}.
\end{eqnarray*}
Thus 
\[
\frac{\mathcal{U}_{m}(\mu_{0})-\mathcal{U}_{m}(\hat{\mu}_{n}^{r})}{w_{p}(\mu_{0},\hat{\mu}_{n}^{r})}\ge\frac{\mathcal{U}_{m}(\mu_{0})-\mathcal{U}_{m}(\mu_{n})+\tilde{C}\mu^{s}(M)r^{n(1-m)}}{w_{p}(\mu_{0},\mu_{n})}\cdot\left(1+\frac{r\mu_{n}^{s}(M)^{\frac{1}{p}}}{p^{\frac{1}{p}}w_{p}(\mu_{0},\mu_{n})}\right)^{-1}.
\]
Now choosing $r_{n}$ such that $r_{n}/w_{p}(\mu_{0},\mu_{n})\to0$
we see that 
\[
\limsup_{n\to\infty}\frac{\mathcal{U}_{m}(\mu_{0})-\mathcal{U}_{m}(\hat{\mu}_{n}^{r_{n}})}{w_{p}(\mu_{0},\hat{\mu}_{n})}\ge\limsup\frac{\mathcal{U}_{m}(\mu_{0})-\mathcal{U}_{m}(\mu_{n})}{w_{p}(\mu_{0},\mu_{n})}
\]
By maximality of $\mu_{n}$ we see that this has to be an equality,
so up to extracting a subsequence we get 
\[
|D^{-}\mathcal{U}_{p}|(\mu_{0})=\lim_{n\to0}\frac{\mathcal{U}_{p}(\mu_{0})-\mathcal{U}_{p}(\hat{\mu}_{n}^{r_{n}})}{w_{p}(\mu_{0},\hat{\mu}_{n})}.
\]
\end{proof}
\begin{thm}
Assume $r>0$ and let $\mu_{0}\in D(\mathcal{U}_{p})$ with $|D^{-}\mathcal{U}_{p}|(\mu_{0})<\infty$.
Then $\mu_{0}=\rho\mu$, $\rho^{r}\in D(\operatorname{Ch}_{q})$ and
\[
r^{-q}\int|\nabla\rho^{r}|_{*}^{q}d\mu\le|D^{-}\mathcal{U}_{p}|^{q}(\mu_{0}).
\]
\end{thm}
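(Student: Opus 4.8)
The plan is to follow the strategy of \cite[Theorem 7.6]{Ambrosio2013}, which links the descending slope of the entropy in the Wasserstein space to the Fisher information via a combination of the Kuwada lemma (Lemma \ref{lem:Kuwada}), the comparison/contraction estimates for the $q$-heat flow (Theorem \ref{thm:laplace-calculus}), and the formula for the derivative of $\mathcal{U}_p$ along that flow. First I would invoke Proposition \ref{prop:abs-cont} to conclude that $\mu_0=\rho\mu$ is absolutely continuous and that there is an approximating sequence of absolutely continuous measures realizing the descending slope; in particular $\mathcal{U}_p(\mu_0)<\infty$ gives $\rho\in L^{3-p}$ (equivalently, after raising to the power $r$, the natural integrability needed below). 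The goal is the inequality $\mathsf{F}_q(\rho)=r^{-q}\int|\nabla\rho^{r}|_*^{q}\,d\mu\le|D^{-}\mathcal{U}_p|^{q}(\mu_0)$, together with the membership $\rho^{r}\in D(\operatorname{Ch}_q)$, which will fall out of a uniform bound on the $q$-Cheeger energies of suitable approximations plus the lower semicontinuity asserted in Proposition \ref{prop:Fisher}.

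The main step is to run the $q$-heat flow $t\mapsto f_t=H_t(\rho)$ for a short time and differentiate the functional $E(t):=\mathcal{U}_p(f_t\mu)=\frac{1}{(3-p)(2-p)}\int (f_t^{3-p}-f_t)\,d\mu$. By Theorem \ref{thm:laplace-calculus}(3) applied with $e(r)=\frac{1}{(3-p)(2-p)}(r^{3-p}-r)$ (after the usual truncation/$\epsilon$-shift argument, using $f_t+\epsilon=H_t(\rho+\epsilon)$ and letting $\epsilon\to 0$, exactly as in the momentum--entropy lemma), one gets
\[
\frac{d}{dt}E(t)=-\int_{\{f_t>0\}}\frac{|\nabla f_t|_*^{q}}{f_t^{p-1}}\,d\mu=-\mathsf{F}_q(f_t)\qquad\text{a.e. }t>0,
\]
so that $\mathcal{U}_p(\mu_0)-\mathcal{U}_p(f_t\mu)=\int_0^t \mathsf{F}_q(f_s)\,ds$. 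On the other hand, since $\mu_0$ realizes its descending slope along an absolutely continuous approximating sequence and $|D^{-}\mathcal{U}_p|$ is an upper gradient of $\mathcal{U}_p$ along absolutely continuous curves (here the curve $s\mapsto f_s\mu$, whose absolute continuity and metric speed bound $|\dot\mu_s|^p\le \mathsf{F}_q(f_s)$ are exactly the content of the Kuwada lemma), one has
\[
\mathcal{U}_p(\mu_0)-\mathcal{U}_p(f_t\mu)\le\int_0^t |\dot\mu_s|\,|D^{-}\mathcal{U}_p|(\mu_s)\,ds\le\int_0^t |D^{-}\mathcal{U}_p|(\mu_s)\,|\dot\mu_s|\,ds.
\]
Applying Young's inequality $ab\le\frac1p a^p+\frac1q b^q$ to the integrand with $a=|\dot\mu_s|$ and $b=|D^{-}\mathcal{U}_p|(\mu_s)$, and using $|\dot\mu_s|^p\le\mathsf{F}_q(f_s)$, the term $\frac1p\int_0^t|\dot\mu_s|^p\,ds\le\frac1p\int_0^t\mathsf{F}_q(f_s)\,ds$ can be absorbed into the left-hand side $\int_0^t\mathsf{F}_q(f_s)\,ds$, yielding
\[
\Big(1-\tfrac1p\Big)\int_0^t\mathsf{F}_q(f_s)\,ds\le\tfrac1q\int_0^t|D^{-}\mathcal{U}_p|^{q}(\mu_s)\,ds,
\]
i.e. $\frac1q\int_0^t\mathsf{F}_q(f_s)\,ds\le\frac1q\int_0^t|D^{-}\mathcal{U}_p|^{q}(\mu_s)\,ds$. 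Dividing by $t$, letting $t\downarrow 0$, and using lower semicontinuity of $\mathsf{F}_q$ (Proposition \ref{prop:Fisher}) against $f_t\to\rho$ together with, say, lower semicontinuity of $s\mapsto|D^{-}\mathcal{U}_p|(\mu_s)$ or a Lebesgue-point argument at $s=0$, gives $\mathsf{F}_q(\rho)\le|D^{-}\mathcal{U}_p|^{q}(\mu_0)$, and the finiteness of this quantity together with Proposition \ref{prop:Fisher} gives $\rho^{r}\in D(\operatorname{Ch}_q)$.

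I expect the main obstacle to be the justification of the chain of upper-gradient and lower-semicontinuity inequalities in the non-compact, merely $\sigma$-finite setting: specifically, (i) that $|D^{-}\mathcal{U}_p|$ genuinely acts as an upper gradient along the heat-flow curve $s\mapsto\mu_s$ without an a priori $K$-convexity assumption — here one must use Proposition \ref{prop:abs-cont} to pass to absolutely continuous competitors and then the structure of $\mathcal{U}_p$ — and (ii) controlling the limit $t\downarrow 0$, where $|D^{-}\mathcal{U}_p|(\mu_s)$ need not be continuous at $s=0$; the standard fix is to bound $\frac1t\int_0^t|D^{-}\mathcal{U}_p|^q(\mu_s)\,ds$ from above by $|D^{-}\mathcal{U}_p|^q(\mu_0)$ using that $s\mapsto\mathcal{U}_p(\mu_s)$ is nonincreasing and a monotonicity property of the slope along the flow (as in \cite[Theorem 7.6]{Ambrosio2013}), or to argue with the upper-gradient formula for the slope under the relevant convexity. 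The integrability bookkeeping ensuring all the integrals $\int_0^t\mathsf{F}_q(f_s)\,ds$ are finite for small $t$ — which is where the momentum--entropy Lemma and the hypothesis $|D^-\mathcal{U}_p|(\mu_0)<\infty$ enter — is routine but must be stated carefully.
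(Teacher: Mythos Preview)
Your proposal has a genuine gap, and you have in fact already identified it yourself: the inequality
\[
\mathcal{U}_p(\mu_0)-\mathcal{U}_p(\mu_t)\le\int_0^t |\dot\mu_s|\,|D^-\mathcal{U}_p|(\mu_s)\,ds
\]
is the upper-gradient property of the descending slope, and this is only known to hold once $\mathcal{U}_p$ is $K$-convex along geodesics. No such convexity is assumed in the statement, so this step is unjustified. The second difficulty you flag --- passing from $\frac{1}{t}\int_0^t|D^-\mathcal{U}_p|^q(\mu_s)\,ds$ to $|D^-\mathcal{U}_p|^q(\mu_0)$ --- would require upper semicontinuity of the slope along the flow, which again is not available without further structure. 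Invoking Proposition~\ref{prop:abs-cont} at the outset is also problematic: that proposition carries extra hypotheses (compactness, Ahlfors regularity when $p>2$) that are not part of the present theorem, and it is not needed here.

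The paper's argument (following \cite[Theorem~7.4]{Ambrosio2013}, not 7.6) sidesteps both issues by going through the \emph{definition} of the descending slope rather than its upper-gradient property. Since $\mu_t\to\mu_0$ in $\mathcal{P}_p(M)$ as $t\downarrow 0$, one always has
\[
|D^-\mathcal{U}_p|(\mu_0)\ \ge\ \limsup_{t\downarrow 0}\frac{\mathcal{U}_p(\mu_0)-\mathcal{U}_p(\mu_t)}{w_p(\mu_0,\mu_t)}.
\]
The work is to bound this ratio from \emph{below}. Using the energy identity $\mathcal{U}_p(\mu_0)-\mathcal{U}_p(\mu_t)=\int_0^t\mathsf{F}_q(\rho_s)\,ds$, splitting $\mathsf{F}_q=\tfrac1q\mathsf{F}_q+\tfrac1p\mathsf{F}_q$, applying Kuwada to the second piece, and then H\"older/Young in $t$, one obtains
\[
\mathcal{U}_p(\mu_0)-\mathcal{U}_p(\mu_t)\ \ge\ \frac{1}{t}\Big(\int_0^t\mathsf{F}_q(\rho_s)^{1/q}\,ds\Big)\,w_p(\mu_0,\mu_t).
\]
Dividing by $w_p(\mu_0,\mu_t)$ and letting $t\downarrow 0$ with the lower semicontinuity of $\mathsf{F}_q$ gives $\mathsf{F}_q(\rho)^{1/q}\le|D^-\mathcal{U}_p|(\mu_0)$ directly. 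No convexity, no upper-gradient inequality, and the slope appears only at the single point $\mu_0$.
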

\begin{proof}
We will follow the strategy of \cite[Theorem 7.4]{Ambrosio2013}.
First assume $\rho\in L^{2}(M,\mu)$ and let $(\rho_{t})_{t\in(0,\infty)}$
be the gradient flow of the $q$-Cheeger energy starting from $\rho$.
Let $\mu_{t}=\rho_{t}\mu$ then according to the definition of the
$q$-Fisher information we have by Lemma \ref{thm:laplace-calculus}
and \ref{lem:Kuwada}
\begin{eqnarray*}
\mathcal{U}_{p}(\mu_{0})-\mathcal{U}_{p}(\mu_{t}) & \ge & \frac{1}{q}\int_{0}^{t}\mathsf{F}_{q}(\rho_{s})ds+\frac{1}{p}\int_{0}^{t}|\dot{\mu}_{s}|^{p}d\mu\\
 & \ge & \frac{1}{q}\left(\frac{1}{t^{\frac{1}{p}}}\int_{0}^{t}\sqrt[q]{\mathsf{F}_{q}(\rho_{s})}ds\right)^{q}+\frac{1}{p}\left(\frac{1}{t^{\frac{1}{q}}}\int_{0}^{t}|\dot{\mu}_{s}|ds\right)^{p}\\
 & \ge & \frac{1}{t}\left(\int_{0}\sqrt[q]{\mathsf{F}_{q}(\rho_{s})}ds\right)w_{p}(\mu_{0},\mu_{t}).
\end{eqnarray*}
Thus dividing by $w_{p}(\mu_{0},\mu_{t})$ and letting $t\to0^{+}$
we get the result, since lower-semicontinuity of $\mathsf{F}_{q}$
implies
\[
\sqrt[q]{\mathsf{F}_{q}(\rho_{0})}\le\liminf_{t\to0^{+}}\frac{1}{t}\int_{0}^{t}\sqrt[q]{\mathsf{F}_{q}(\rho_{s})}ds.
\]

In case just $\mathcal{U}_{p}(\mu_{0})<\infty$ holds we prove the
result by approximation: Let $\rho^{n}=\min\{\rho,n\}$ and $(\rho_{t}^{n})$
be the corresponding gradient flow of the $q$-Cheeger energy. Using
the comparison principle we see that $\rho_{t}=\lim_{n\to\infty}\rho_{t}^{n}$
almost everywhere. Thus using the fact that $z_{n}=\int\rho^{n}d\mu=\int\rho_{t}^{n}d\mu$
we deduce that $\mu_{t}^{n}=\frac{1}{z_{n}}\rho_{t}^{n}\mu$ converges
to $\mu_{t}=\rho_{t}\mu$ in $\mathcal{P}_{p}(M)$. Now using the
lower semicontinuity properties of $\mathcal{U}_{p}$ we deduce
\[
\mathcal{U}_{p}(\mu_{0})-\mathcal{U}_{p}(\mu_{t})\ge\frac{1}{t}\left(\int_{0}^{t}\sqrt[q]{\mathsf{F}_{q}}ds\right)w_{p}(\mu_{0},\mu_{t})
\]
and conclude as above. \end{proof}
\begin{thm}
\label{thm:grad-upper-bound}Assume $\mu$ is finite and, in addition
if $p>2$, assume also that $(M,d,\mu)$ is as in Proposition \ref{prop:abs-cont}.
Let $\mu_{0}=\rho\mu\in D(\mathcal{U}_{p})$ and assume $\rho$ is
a bounded Lipschitz continuous map with $\rho\ge\epsilon$. Then 
\[
|D^{-}\mathcal{U}_{p}|^{q}(\mu_{0})\le\int\frac{|D\rho|^{q}}{\rho^{p-1}}d\mu=r^{-q}\int|D\rho^{r}|^{q}d\mu,
\]
where $|D\rho|(x)=\max\{|D^{+}\rho|(x),|D^{-}\rho|(x)\}$.\end{thm}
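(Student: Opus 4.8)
The plan is to reduce $|D^{-}\mathcal{U}_{p}|(\mu_{0})$ to a bound over absolutely continuous competitors, linearise $\mathcal{U}_{p}$ using convexity of $U_{p}$, and then estimate the slope of the linear functional attached to the Lipschitz potential $\varphi:=U_{p}'(\rho)$. The difference quotient defining $|D^{-}\mathcal{U}_{p}|(\mu_{0})$ vanishes unless $\mathcal{U}_{p}(\nu)\le\mathcal{U}_{p}(\mu_{0})<\infty$; for $p\in(1,2)$ this forces $\nu\ll\mu$, and for $p\in(2,3)$ Proposition~\ref{prop:abs-cont} (the only place the compact, $n$-Ahlfors regular hypothesis is used) lets one replace competitors by absolutely continuous ones with comparable $w_{p}$-distance and no smaller difference quotient. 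So it suffices to show, for $\nu=\sigma\mu$ with $w_{p}(\mu_{0},\nu)\to0$,
\[
\mathcal{U}_{p}(\mu_{0})-\mathcal{U}_{p}(\nu)\le\Bigl(\int\frac{|D\rho|^{q}}{\rho^{p-1}}\,d\mu\Bigr)^{1/q}w_{p}(\mu_{0},\nu)+o\bigl(w_{p}(\mu_{0},\nu)\bigr).
\]
Here $\varphi$ is bounded and Lipschitz, since $\rho$ is bounded, Lipschitz and $\ge\epsilon$ while $U_{p}'$ is $C^{1}$ with bounded derivative on $[\epsilon,\|\rho\|_{\infty}]$. Convexity of $U_{p}$ gives $U_{p}(\rho)-U_{p}(\sigma)\le U_{p}'(\rho)(\rho-\sigma)$ pointwise, all terms being $\mu$-integrable, so for an optimal plan $\pi\in\Pi(\mu_{0},\nu)$,
\[
\mathcal{U}_{p}(\mu_{0})-\mathcal{U}_{p}(\nu)\le\int\varphi\,d\mu_{0}-\int\varphi\,d\nu=\int\bigl(\varphi(x)-\varphi(y)\bigr)\,d\pi(x,y).
\]

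To bound the right-hand side I would set $L_{r}(x):=\sup\{|\varphi(u)-\varphi(v)|/d(u,v):u\ne v,\ u,v\in B_{r}(x)\}\le\operatorname{Lip}(\varphi)$, so that $|\varphi(x)-\varphi(y)|\le L_{r}(x)d(x,y)$ whenever $d(x,y)<r$. Splitting $\pi$ over $\{d(x,y)<r\}$ and its complement, using Hölder with exponents $q,p$ and that the first marginal of $\pi$ is $\mu_{0}$ on the near-diagonal part, and the crude bound $|\varphi(x)-\varphi(y)|\le2\|\varphi\|_{\infty}$ together with Chebyshev's inequality $\pi(\{d\ge r\})\le r^{-p}\int d^{p}\,d\pi$ on the rest, one gets
\[
\int\bigl(\varphi(x)-\varphi(y)\bigr)\,d\pi\le\Bigl(\int L_{r}^{q}\,d\mu_{0}\Bigr)^{1/q}\Bigl(\int d^{p}(x,y)\,d\pi\Bigr)^{1/p}+2\|\varphi\|_{\infty}r^{-p}\int d^{p}(x,y)\,d\pi.
\]
Since $\int d^{p}\,d\pi$ is a fixed multiple of $w_{p}^{p}(\mu_{0},\nu)$, dividing by $w_{p}(\mu_{0},\nu)$ and sending $\nu\to\mu_{0}$ kills the last term and yields $|D^{-}\mathcal{U}_{p}|(\mu_{0})\le(\int L_{r}^{q}\,d\mu_{0})^{1/q}$ for every $r>0$. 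Finally let $r\downarrow0$: $L_{r}$ decreases pointwise to the asymptotic Lipschitz constant of $\varphi$, which under the standing assumption $|Df|=|D^{\pm}f|$ coincides $\mu$-a.e.\ with $|D\varphi|$, so by dominated convergence ($L_{r}\le\operatorname{Lip}(\varphi)$, $\mu_{0}$ finite)
\[
|D^{-}\mathcal{U}_{p}|^{q}(\mu_{0})\le\int|D\varphi|^{q}\,d\mu_{0}=\int\bigl|DU_{p}'(\rho)\bigr|^{q}\rho\,d\mu=\int\frac{|D\rho|^{q}}{\rho^{p-1}}\,d\mu,
\]
the last equalities being the chain rule $|DU_{p}'(\rho)|=|U_{p}''(\rho)|\,|D\rho|=\rho^{1-p}|D\rho|$ together with $(1-p)q=-p$; the form $r^{-q}\int|D\rho^{r}|^{q}\,d\mu$ follows from the chain rule once more, exactly as in Proposition~\ref{prop:Fisher}.

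I expect the main obstacle to be the limit $r\downarrow0$: it needs the asymptotic Lipschitz constant of the Lipschitz function $\varphi$ to agree $\mu$-a.e.\ with its slope $|D\varphi|$, which is precisely where the local doubling and Poincaré content behind the standing assumption $|Df|=|D^{\pm}f|$ is genuinely used — without it the argument only delivers the weaker bound with $L_{r}$, hence with the asymptotic Lipschitz constant, in place of $|D\rho|$. A secondary point, relevant only for $p\in(2,3)$, is to make rigorous the restriction to absolutely continuous competitors, including the a priori possibility that the slope is infinite, handled by running the construction of Proposition~\ref{prop:abs-cont} on a slope‑realising sequence — which is what forces the compactness and Ahlfors‑regularity hypotheses in that range. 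Combined with the preceding theorem this also identifies $|D^{-}\mathcal{U}_{p}|^{q}(\mu_{0})$ with $\mathsf{F}_{q}(\rho)$ for such $\rho$.
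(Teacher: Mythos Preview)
Your approach is essentially the paper's: reduce to absolutely continuous competitors (via Proposition~\ref{prop:abs-cont} when $p>2$), linearise $\mathcal{U}_p$ by convexity of $U_p$, and bound $\int(\varphi(x)-\varphi(y))\,d\pi$ with $\varphi=U_p'(\rho)$ by a local difference quotient times the transport cost. The one substantive difference is in the passage to the limit. You use the two-point ball constant $L_r(x)=\sup\{|\varphi(u)-\varphi(v)|/d(u,v):u\ne v,\ u,v\in B_r(x)\}$, whose limit as $r\downarrow0$ is the \emph{asymptotic} Lipschitz constant $\operatorname{Lip}_a\varphi$; identifying this with $|D\varphi|$ $\mu$-a.e.\ is exactly the obstacle you flag, and it does not follow from the literal standing assumption $|Df|=|D^\pm f|$ alone (it does under doubling and Poincar\'e). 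The paper sidesteps this entirely by anchoring the quotient at $x$ and keeping only the positive part: with
\[
L(x,y)=\frac{[\varphi(x)-\varphi(y)]^+}{d(x,y)}\quad(y\ne x),\qquad L(x,x)=\frac{|D\rho|(x)}{\rho(x)^{p-1}},
\]
one has $\limsup_{y\to x}L(x,y)=|D^-\varphi|(x)\le|D\varphi|(x)=\rho(x)^{1-p}|D\rho|(x)$ by the chain rule for slopes, so $y\mapsto L(x,y)$ is upper semicontinuous with no extra hypothesis needed. The remainder---disintegration of $\pi_n$ plus Fatou in the paper, versus your near/far split, Chebyshev, and dominated convergence---is interchangeable. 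Replacing your $L_r(x)$ by $\sup_{y\in B_r(x)\setminus\{x\}}[\varphi(x)-\varphi(y)]^+/d(x,y)$ would make your argument self-contained in the same way.
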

\begin{rem*}
For $p<2$, we have $2-p>0$ and the idea of \cite[Theorem 7.5]{Ambrosio2013}
can be followed in a similar way using the approximation function
$\Phi$ (see Theorem \ref{thm:q-mass-pres}) so that a similar version
to that theorem follows. For $p>2$, we have $2-p<0$, so that an
appropriate version requires further work. Note, however, that Proposition
\ref{prop:abs-cont} requires $M$ to be compact and hence $\mu$
to be finite.\end{rem*}
\begin{proof}
Recall that 
\begin{eqnarray*}
U_{p}(r) & = & \frac{1}{(3-p)(2-p)}(x^{3-p}-x)\\
\tilde{U}_{p}(r) & = & \frac{1}{(3-p)(2-p)}x^{3-p}.
\end{eqnarray*}

Define 
\[
L(x,y):=\begin{cases}
\frac{\left(\frac{1}{2-p}\rho^{2-p}(x)-\frac{1}{2-p}\rho^{2-p}(y)\right)^{+}}{d(x,y)} & \mbox{ if }x\ne y\\
\frac{|D\rho|}{\rho^{p-1}} & \mbox{ if }x=y.
\end{cases}
\]
Note that $L$ is measurable and for fixed $x\in M$ the map $y\mapsto L(x,y)$
is upper semicontinuous. Furthermore, since $\rho$ is Lipschitz and
$\epsilon\le\rho\le M$, $L$ is bounded.

Now take a sequence of absolutely continuous measures $\mu_{n}$ with
$w_{p}(\mu_{0},\mu_{n})\to0$ and 
\[
|D^{-}\mathcal{U}_{p}|(\mu_{0})=\lim_{n\to0}\frac{\mathcal{U}_{p}(\mu_{0})-\mathcal{U}_{p}(\mu_{n})}{w_{p}(\mu_{0},\mu_{n})}.
\]
Let $\rho_{n}$ be the density of $\mu_{n}$ w.r.t. $\mu$ and $\pi_{n}$
be some $c_{p}$-optimal transport plan of $(\mu_{0},\mu_{n})$. Because
$r\mapsto U_{p}(r)$ is convex we have 
\begin{eqnarray*}
\mathcal{U}_{p}(\mu_{0})-\mathcal{U}_{p}(\mu_{n}) & = & \int\left(U_{p}(\rho)-U_{p}(\rho_{n})\right)d\mu\le\int U_{p}^{'}(\rho)(\rho-\rho_{n})d\mu\\
 & = & \int U_{p}^{'}(\rho)d\mu_{0}-\int U_{p}^{'}(\rho)d\mu_{n}=\int\left(\tilde{U}_{p}^{'}(\rho(x))-\tilde{U}_{p}^{'}(\rho(y))\right)d\pi_{n}(x,y)\\
 & \le & \int L(x,y)d(x,y)d\pi_{n}(x,y)\le w_{p}(\mu_{0},\mu_{n})\left(\int L^{q}(x,y)d\pi_{n}(x,y)\right)^{1/q}\\
 & = & w_{p}(\mu_{0},\mu_{n})\left(\int\left(\int L^{q}(x,y)d\pi_{n,x}(y)\right)d\mu_{0}(x)\right)^{1/q}
\end{eqnarray*}
where $\pi_{n,x}$ is the disintegration of $\pi_{n}$ w.r.t. the
first marginal $\mu_{0}$ and $\tilde{U}_{p}(x)=\frac{1}{(3-p)(2-p)}x^{2-p}$.
Since $\int(\int d^{p}(x,y)d\pi_{n,x}(y))d\mu_{0}(x)\to0$ we can
assume w.l.o.g. that for $\mu_{0}$-a.e. $x\in M$
\[
\lim_{n\to\infty}\int d^{p}(x,y)d\pi_{n,x}(y)=0
\]
 and in particular
\[
\int_{M\backslash B_{r}(x)}L^{q}(x,y)d\pi_{n,x}(y)\to0
\]
 for all $r>0$. Furthermore, notice
\begin{eqnarray*}
\limsup_{n\to\infty}\int L^{q}(x,y)d\pi_{n,x}(y) & \le & \limsup_{n\to\infty}\int_{B_{r}(x)}L^{q}(x,y)d\pi_{n,x}(y)\\
 &  & +\limsup_{n\to\infty}\int_{M\backslash B_{r}(x)}L^{q}(x,y)d\pi_{n,x}(y)\\
 & \le & \limsup_{n\to\infty}\int_{B_{r}(x)}L^{q}(x,y)d\pi_{n,x}(y)\le\sup_{y\in B_{r}(x)}L^{q}(x,y).
\end{eqnarray*}
By upper semicontinuity of $L(x,\cdot)$ we immediately get $\limsup_{n}\int L^{q}(x,y)d\pi_{n,x}(y)\le L^{q}(x,x)$
for $\mu_{0}$-almost every $x\in M$. Since $L$ is bounded, we can
use Fatou's lemma and conclude
\begin{eqnarray*}
|D^{-}\mathcal{U}_{p}|(\mu_{0}) & = & \lim_{n\to0}\frac{\mathcal{U}_{p}(\mu_{0})-\mathcal{U}_{p}(\mu_{n})}{w_{p}(\mu_{0},\mu_{n})}\\
 & \le & \int\limsup_{n\to\infty}\left(\int L^{q}(x,y)d\pi_{n,x}(y)\right)^{1/q}d\mu_{0}(x)\\
 & \le & \left(\int L^{q}(x,x)d\mu_{0}(x)\right)\\
 & = & \left(\int\frac{|D\rho|^{q}}{\rho^{(p-1)q}}\rho d\mu\right)^{1/q}=\left(\int\frac{|D\rho|^{q}}{\rho^{p-1}}\rho d\mu\right)^{1/q}.
\end{eqnarray*}
\end{proof}
\begin{prop}
\label{prop:gradient-Fisher}If $|D^{-}\mathcal{U}_{p}|$ is sequentially
lower semicontinuous w.r.t. $\mathcal{P}_{p}(M)$ then 
\[
|D^{-}\mathcal{U}_{p}|^{q}(\mu_{0})=r^{-q}\int|\nabla\rho^{r}|_{*}^{q}d\mu\qquad\forall\mu_{0}=\rho\mu\in D(\mathcal{U}_{p}).
\]
\end{prop}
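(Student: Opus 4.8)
The plan is to prove the two inequalities separately and splice them with the hypothesised lower semicontinuity. The bound $\ge$ is essentially free: the (unnumbered) theorem just above Theorem~\ref{thm:grad-upper-bound}, asserting that $|D^{-}\mathcal{U}_{p}|(\mu_{0})<\infty$ forces $\mu_{0}=\rho\mu$, $\rho^{r}\in D(\operatorname{Ch}_{q})$ and $r^{-q}\int|\nabla\rho^{r}|_{*}^{q}d\mu\le|D^{-}\mathcal{U}_{p}|^{q}(\mu_{0})$, combined with Proposition~\ref{prop:Fisher}, gives $\mathsf{F}_{q}(\rho)=r^{-q}\int|\nabla\rho^{r}|_{*}^{q}d\mu\le|D^{-}\mathcal{U}_{p}|^{q}(\mu_{0})$ for every $\mu_{0}=\rho\mu\in D(\mathcal{U}_{p})$ (trivially so when the right-hand side is $+\infty$); in particular the identity already holds whenever $\mathsf{F}_{q}(\rho)=\infty$. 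So one may assume $\mathsf{F}_{q}(\rho)<\infty$, i.e.\ $\rho^{r}\in D(\operatorname{Ch}_{q})$ by Proposition~\ref{prop:Fisher}, and must prove $|D^{-}\mathcal{U}_{p}|^{q}(\mu_{0})\le\mathsf{F}_{q}(\rho)$, working under the standing hypotheses of Theorem~\ref{thm:grad-upper-bound} ($\mu$ finite, and $M$ compact and $n$-Ahlfors regular when $p>2$). Note that Theorem~\ref{thm:grad-upper-bound} by itself does not close the gap even for Lipschitz $\rho$, since it only bounds $|D^{-}\mathcal{U}_{p}|^{q}(\mu_{0})$ by $r^{-q}\int|D\rho^{r}|^{q}d\mu$, which can be strictly larger than $\mathsf{F}_{q}(\rho)$; this is exactly where the lower semicontinuity enters.

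The core of the argument is an approximation. By the definition of the $q$-relaxed slope (picking a recovery sequence and, if necessary, passing to convex combinations) one obtains Lipschitz $h_{n}\to\rho^{r}$ strongly in $L^{2}(M,\mu)$ with $\int|Dh_{n}|^{q}d\mu\to\int|\nabla\rho^{r}|_{*}^{q}d\mu$. Choosing $C_{n}\uparrow\infty$ and $\delta_{n}\downarrow0$, set $\psi_{n}:=\delta_{n}\vee(h_{n}\wedge C_{n})$ and $\rho_{n}:=c_{n}^{-1}\psi_{n}^{1/r}$ with $c_{n}:=\int\psi_{n}^{1/r}d\mu$, so $\rho_{n}\mu\in\mathcal{P}(M)$. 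Since $1/r>1$, $t\mapsto t^{1/r}$ is Lipschitz on bounded intervals, so each $\rho_{n}$ is a bounded Lipschitz probability density with $\rho_{n}\ge c_{n}^{-1}\delta_{n}^{1/r}>0$; if $C_{n}\to\infty$ and $\delta_{n}\to0$ slowly enough, then $c_{n}\to1$ and $\rho_{n}\mu\to\mu_{0}$ in $\mathcal{P}_{p}(M)$ (using $h_{n}\to\rho^{r}$ in $L^{2}$, $\rho\in L^{1}(\mu)$ and dominated convergence). As $\rho_{n}^{r}=c_{n}^{-r}\psi_{n}$ and truncation does not increase the local Lipschitz constant, $|D\rho_{n}^{r}|\le c_{n}^{-r}|Dh_{n}|$ $\mu$-a.e., so
\[
\limsup_{n\to\infty}\;r^{-q}\int|D\rho_{n}^{r}|^{q}d\mu\;\le\;\limsup_{n\to\infty}\;c_{n}^{-rq}\,r^{-q}\int|Dh_{n}|^{q}d\mu\;=\;r^{-q}\int|\nabla\rho^{r}|_{*}^{q}d\mu=\mathsf{F}_{q}(\rho).
\]
Applying Theorem~\ref{thm:grad-upper-bound} to each $\rho_{n}$ (legitimate since $\rho_{n}$ is bounded Lipschitz, bounded away from zero, and $\rho_{n}\mu\in D(\mathcal{U}_{p})$ as $\mu$ is finite) gives $|D^{-}\mathcal{U}_{p}|^{q}(\rho_{n}\mu)\le r^{-q}\int|D\rho_{n}^{r}|^{q}d\mu$, and letting $n\to\infty$ the assumed sequential lower semicontinuity of $|D^{-}\mathcal{U}_{p}|$ along $\rho_{n}\mu\to\mu_{0}$ in $\mathcal{P}_{p}(M)$ yields
\[
|D^{-}\mathcal{U}_{p}|^{q}(\mu_{0})\;\le\;\liminf_{n\to\infty}|D^{-}\mathcal{U}_{p}|^{q}(\rho_{n}\mu)\;\le\;\limsup_{n\to\infty}\;r^{-q}\int|D\rho_{n}^{r}|^{q}d\mu\;\le\;\mathsf{F}_{q}(\rho),
\]
which together with the first inequality is the claim.

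The main obstacle is the construction of $(\rho_{n})$: it must be kept Lipschitz, bounded, bounded away from zero, of unit mass, convergent to $\mu_{0}$ in $\mathcal{P}_{p}(M)$, and with $r^{-q}\int|D\rho_{n}^{r}|^{q}d\mu$ asymptotically at most the relaxed-slope energy $r^{-q}\int|\nabla\rho^{r}|_{*}^{q}d\mu$. The genuinely delicate point is compatibility of the lower truncation with $\mathcal{P}_{p}$-convergence: when $M$ is compact (hence always when $p>2$, by the standing hypothesis) the floor $\delta_{n}$ costs nothing, but for noncompact $M$ (the case $p<2$) a global floor may destroy finiteness of the $p$-th moment, and one should instead invoke the weight-based version of Theorem~\ref{thm:grad-upper-bound} indicated in the remark following it, which applies directly to bounded densities without a lower bound, making the approximation step either unnecessary or easier. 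Everything else is routine, resting only on the Lipschitz approximations that define $\operatorname{Ch}_{q}$ and on the weak chain rule for relaxed slopes recalled in Section~1.
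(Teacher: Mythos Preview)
Your proof is correct and follows the same two-step strategy as the paper: the inequality $\ge$ comes from the earlier theorem, and $\le$ is obtained by approximating $\rho$ with bounded Lipschitz densities bounded away from zero, applying Theorem~\ref{thm:grad-upper-bound} to each, and invoking the assumed sequential lower semicontinuity of $|D^{-}\mathcal{U}_{p}|$. The paper's argument is terser but structurally identical; it only handles the upper truncation of $\rho$ as a separate final step rather than folding it into the approximating sequence as you do.

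There is one technical difference worth recording. The paper produces Lipschitz $\rho_{n}$ with $\int|\nabla\rho_{n}^{r}|_{*}^{q}\,d\mu\to q\operatorname{Ch}_{q}(\rho^{r})$ and then passes to $\int|D\rho_{n}^{r}|^{q}\,d\mu$ (needed to invoke Theorem~\ref{thm:grad-upper-bound}) via the identity $|\nabla\rho_{n}^{r}|_{*}=|D\rho_{n}^{r}|$ a.e.\ for Lipschitz functions; this is not a consequence of the standing assumption $|Df|=|D^{\pm}f|$ stated at the beginning of the section, but needs the stronger Cheeger identity (available under doubling and Poincar\'e). Your route---starting from an optimal Lipschitz recovery sequence $h_{n}$ for $\rho^{r}$ with $\int|Dh_{n}|^{q}\,d\mu\to q\operatorname{Ch}_{q}(\rho^{r})$, truncating, and using $|D\rho_{n}^{r}|\le c_{n}^{-r}|Dh_{n}|$---bypasses this identity entirely and is therefore slightly more self-contained. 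Both arguments leave the genuinely delicate non-compact $p<2$ case to the weighted variant of Theorem~\ref{thm:grad-upper-bound} indicated in the remark following it, exactly as you note.
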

\begin{rem*}
In \cite[Theorem 7.6]{Ambrosio2013} Ambrosio-Gigli-Savar\'e proved
also that the converse holds for the entropy functional. We are not
able to prove the converse in case $2r>1$, i.e. $p>2$. \end{rem*}
\begin{proof}
By the above results we only need to show that $|D^{-}\mathcal{U}_{p}|(\mu_{0})\le r^{-q}\int|\nabla\rho^{r}|_{*}^{q}d\mu$.
First assume $\rho$ is bounded and find a sequence of measures $\mu_{n}\in\mathcal{P}_{p}(M)$
with Lipschitz densities $\rho_{n}$ bounded from below by $\frac{1}{n}$
converging in $L^{2r}(M)$ to $\rho$ (by compactness $\rho_{n}^{r}\to\rho^{r}$
in $L^{2})$ such that 
\[
\lim_{n\to\infty}\frac{1}{q}\int|\nabla\rho_{n}^{r}|_{*}^{q}d\mu=\operatorname{Ch}_{q}(\rho^{r}).
\]
Since $|\nabla\rho_{n}^{r}|_{w}=|D\rho_{n}^{r}|$ almost everywhere,
we see that 
\begin{eqnarray*}
|D^{-}\mathcal{U}_{p}|(\mu_{0}) & \le & \liminf_{n\to\infty}|D^{-}\mathcal{U}_{p}|(\mu_{n})\\
 & \le & \liminf r^{-q}\int|\nabla\rho_{n}^{r}|_{*}^{q}d\mu=r^{-q}\int|\nabla\rho^{r}|_{*}^{q}d\mu.
\end{eqnarray*}
In case $\rho$ is unbounded we can truncate $\rho$ without increasing
the $q$-Cheeger energy use the lower semicontinuity again to conclude
the result.\end{proof}
\begin{cor}
Assume one of the following holds:
\begin{itemize}
\item $p\in(1,2)$ and the strong $CD_{p}(K,\infty)$ condition holds for
some $K\ge0$
\item $p\in(2,\frac{3+\sqrt{5}}{2})$, the $CD_{p}(0,N)$ condition holds
such that $p=\frac{2N+1}{N}$ and $M$ is $n$-Ahlfors regular for
some $n<N$.
\end{itemize}
Then $|D^{-}\mathcal{U}_{p}|$ is lower semicontinuous and an upper
gradient of $\mathcal{U}_{p}$.\end{cor}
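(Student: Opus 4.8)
The plan is to deduce both assertions from the abstract theory of $K$-geodesically convex functionals recalled in Section 1, once one knows that $\mathcal{U}_{p}$ is $K$-geodesically convex with $K\ge0$ in $(\mathcal{P}_{p}(M),w_{p})$. So the first step is to establish this convexity. Under the first hypothesis $p\in(1,2)$ forces $3-p\in(1,2)$, hence $U_{p}\in\mathcal{DC}_{\infty}$, and by \cite{Kell2013a} the strong $CD_{p}(K,\infty)$ condition with $K\ge0$ gives $K$-geodesic convexity of $\mathcal{U}_{p}$; since $K\ge0$, the same connecting geodesic also witnesses plain ($0$-)geodesic convexity. Under the second hypothesis the relation $p=\tfrac{2N+1}{N}$ is exactly $1-\tfrac1N=3-p$, so $U_{p}\in\mathcal{DC}_{N}$, and $CD_{p}(0,N)$ yields $0$-geodesic convexity of $\mathcal{U}_{p}$ along geodesics joining absolutely continuous measures. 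To upgrade this to all of $D(\mathcal{U}_{p})$ I would invoke Proposition \ref{prop:abs-cont}: its hypotheses are met because $n$-Ahlfors regularity with $n<N$ is precisely $n(p-2)<1$, so any $\mu_{0}$ with $|D^{-}\mathcal{U}_{p}|(\mu_{0})<\infty$ is absolutely continuous and can be reached by absolutely continuous $\mu_{n}$ realizing the descending slope, along which the convexity inequality is available.

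Next I would apply the results quoted in the subsection ``Geodesically convex functionals and gradient flows'' (see \cite[Section 2.4]{AmbGigSav2008} and, for $p\ne2$, \cite{Rossi2008}). $K$-geodesic convexity with $K\ge0$ immediately gives that $|D^{-}\mathcal{U}_{p}|$ is an upper gradient of $\mathcal{U}_{p}$, and yields the representation
\[
|D^{-}\mathcal{U}_{p}|(\mu_{0})=\sup_{\nu\ne\mu_{0}}\Bigl(\frac{\mathcal{U}_{p}(\mu_{0})-\mathcal{U}_{p}(\nu)}{w_{p}(\mu_{0},\nu)}+\frac{K}{2}w_{p}(\mu_{0},\nu)\Bigr).
\]
For each fixed $\nu$ the displayed competitor is $w_{p}$-lower semicontinuous in $\mu_{0}$: the map $\mu_{0}\mapsto w_{p}(\mu_{0},\nu)$ is continuous and bounded away from $0$ near any point $\ne\nu$, while $\mu_{0}\mapsto\mathcal{U}_{p}(\mu_{0})$ is $w_{p}$-lower semicontinuous by the Lemma valid for $r>0$ (recall $U_{p}$ is convex and $3-p>0$). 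A supremum of lower semicontinuous functions is lower semicontinuous, so $|D^{-}\mathcal{U}_{p}|$ is $w_{p}$-lower semicontinuous, which is the remaining claim. As an alternative route for $p>2$, one may instead identify $|D^{-}\mathcal{U}_{p}|^{q}$ with the $q$-Fisher information $r^{-q}\int|\nabla\rho^{r}|_{*}^{q}\,d\mu$ by combining the theorem just before Proposition \ref{prop:gradient-Fisher} with Theorem \ref{thm:grad-upper-bound} (whose hypotheses are exactly the finiteness/compactness plus Ahlfors assumptions), and then conclude from the lower semicontinuity of $\mathsf{F}_{q}$ recorded in Proposition \ref{prop:Fisher}.

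The main obstacle is Step 1 in the range $p>2$: showing that $CD_{p}(0,N)$ really delivers geodesic convexity of $\mathcal{U}_{p}$ on all of $\mathcal{P}_{p}(M)$ and not merely along geodesics connecting absolutely continuous measures. This is where Ahlfors regularity is genuinely used — through Proposition \ref{prop:abs-cont} it rules out a singular part in any measure of finite descending slope and produces the absolutely continuous recovery sequence along which convexity can be invoked. Since the membership $U_{p}\in\mathcal{DC}_{N}$ is borderline ($N$ being forced by $p$), there is no slack to absorb the singular part, so this reduction cannot be avoided; everything else in the argument is a formal consequence of the general gradient-flow machinery together with the lower semicontinuity of $\mathcal{U}_{p}$ already proved for $r>0$.
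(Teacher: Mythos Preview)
Your approach is essentially the paper's: verify that $U_{p}$ belongs to the appropriate displacement convexity class ($\mathcal{DC}_{\infty}$ when $3-p\in(1,2)$, $\mathcal{DC}_{N}$ with $3-p=1-\tfrac{1}{N}$ when $3-p\in(0,1)$), invoke the curvature condition to obtain $0$-geodesic convexity of $\mathcal{U}_{p}$, and then cite the abstract theory from Section~1 (i.e.\ \cite[Section 2.4]{AmbGigSav2008}) for the upper-gradient property and lower semicontinuity of the descending slope. Your explicit derivation of lower semicontinuity via the sup-formula is exactly what the paper already records in Section~1 (``In particular, it is lower semicontinuous if $E$ is''), so there is no divergence there.

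Where you do diverge is in the extra step for $p>2$: you worry that $CD_{p}(0,N)$ might only yield convexity along geodesics between absolutely continuous measures, and you propose to repair this via Proposition~\ref{prop:abs-cont}. The paper does not take this detour; it simply asserts that displacement convexity (on $\mathcal{P}_{p}(M)$) follows from the curvature condition of \cite{Kell2013a}, and the $n$-Ahlfors regularity assumption plays no role in the proof of this corollary --- it is carried along because it is needed elsewhere (Proposition~\ref{prop:abs-cont}, Theorem~\ref{thm:grad-upper-bound}, Proposition~\ref{prop:gradient-Fisher}). Moreover, your proposed repair does not quite do what you want: Proposition~\ref{prop:abs-cont} tells you that measures of finite slope are absolutely continuous, but this does not by itself upgrade geodesic convexity from pairs of absolutely continuous measures to all of $D(\mathcal{U}_{p})$, which is what the upper-gradient property along arbitrary absolutely continuous curves in $D(\mathcal{U}_{p})$ requires. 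So that paragraph is both unnecessary (relative to the paper's argument) and not a complete fix for the issue you raise.
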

\begin{proof}
In case $p\in(1,2)$ note that $3-p\in(1,2)$ and thus $U_{p}\in\mathcal{DC}_{\infty}$.
In case $p\in(2,\frac{3+\sqrt{5}}{2})$ we have $3-p\in(0,1)$ and
thus $U_{p}\in\mathcal{DC}_{N}$ for $3-p=1-\frac{1}{N}$. In both
cases displacement convexity, i.e. $K$-convexity with $K=0$, follows.
Which implies that $|D^{-}\mathcal{U}_{p}|$ is lower semicontinuous
and an upper gradient of $\mathcal{U}_{p}$.
\end{proof}
The conclusion holds equally if $\mathcal{U}_{p}$ is just $K$-convex.
Since $K$-convexity neither follows from the strong $CD_{p}(K,\infty)$-condition
in case $p\in(1,2)$ nor from $CD_{p}(K,N)$, we use those conditions
to imply convexity. Nevertheless, we hope that it is possible to show
that $ $$|D^{-}\mathcal{U}_{p}|$ is lower semicontinuous and an
upper gradient of $\mathcal{U}_{p}$ if one of the curvature condition
holds.
\begin{thm}
[Uniqueness of the gradient flow of $\mathcal{U}_p$] \label{thm:gradunique}Let
$r>0$ and assume that $|D^{-}\mathcal{U}_{p}|^{q}$ is lower semicontinuous
and convex w.r.t. linear interpolation. Then for every $\mu_{0}\in\mathcal{P}_{p}(M)$
there exists at most one gradient flow of $\mathcal{U}_{p}$ starting
from $\mu_{0}$.\end{thm}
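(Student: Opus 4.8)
The plan is to run the classical ``doubling along linear interpolations'' argument of \cite[Theorem 9.3]{Ambrosio2013}, the crucial point being that $U_p$ is \emph{strictly} convex, since $U_p''(x)=x^{1-p}>0$ on $(0,\infty)$ (with the usual convention $U_2(x)=x\log x$). Suppose $(\mu_t^1)$ and $(\mu_t^2)$ are two gradient flows of $\mathcal{U}_p$ starting from $\mu_0$; we may assume $\mu_0\in D(\mathcal{U}_p)$, as otherwise no such flow exists and there is nothing to prove. Put $\mu_t:=\tfrac12\mu_t^1+\tfrac12\mu_t^2$.

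First I would collect three convexity facts for the linear interpolation. (i) $w_p$ is convex: averaging optimal plans between $\mu_s^i$ and $\mu_t^i$ produces a coupling of $\mu_s$ and $\mu_t$, whence $w_p(\mu_s,\mu_t)\le\tfrac12 w_p(\mu_s^1,\mu_t^1)+\tfrac12 w_p(\mu_s^2,\mu_t^2)$; therefore $(\mu_t)$ is absolutely continuous with $|\dot\mu_t|\le\tfrac12|\dot\mu_t^1|+\tfrac12|\dot\mu_t^2|$ and hence $|\dot\mu_t|^p\le\tfrac12|\dot\mu_t^1|^p+\tfrac12|\dot\mu_t^2|^p$ a.e. (ii) $\mathcal{U}_p(\mu_t)\le\tfrac12\mathcal{U}_p(\mu_t^1)+\tfrac12\mathcal{U}_p(\mu_t^2)$, from convexity of $U_p$ (the singular parts of $\mu_t^1,\mu_t^2$ add up, and $U_p'(\infty)$ contributes affinely in the singular mass). (iii) $|D^-\mathcal{U}_p|^q(\mu_t)\le\tfrac12|D^-\mathcal{U}_p|^q(\mu_t^1)+\tfrac12|D^-\mathcal{U}_p|^q(\mu_t^2)$, which is exactly the hypothesis.

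Averaging the gradient-flow identities for $\mu^1$ and $\mu^2$ and inserting (i)--(iii) then yields
\[
\mathcal{U}_p(\mu_0)-\mathcal{U}_p(\mu_t)\ \ge\ \tfrac1p\int_0^t|\dot\mu_s|^p\,ds+\tfrac1q\int_0^t|D^-\mathcal{U}_p|^q(\mu_s)\,ds ,
\]
that is, $(\mu_t)$ satisfies the $(\mathcal{U}_p,p)$-dissipation inequality. Since $|D^-\mathcal{U}_p|$ is an upper gradient of $\mathcal{U}_p$ (as recalled before this theorem, and guaranteed in the present setting by the $K$-convexity of $\mathcal{U}_p$), the opposite inequality holds automatically via Young's inequality, so every inequality above must in fact be an equality for every $t\ge 0$; in particular (ii) is an equality at each $t$, i.e. $\mathcal{U}_p(\mu_t)=\tfrac12\mathcal{U}_p(\mu_t^1)+\tfrac12\mathcal{U}_p(\mu_t^2)$.

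Finally I would localize to densities. Along a gradient flow $\mathcal{U}_p(\mu_t^i)\le\mathcal{U}_p(\mu_0)<\infty$ and $\int_0^\infty|D^-\mathcal{U}_p|^q(\mu_s^i)\,ds<\infty$; for $p\in(1,2)$ finiteness of $\mathcal{U}_p$ already forces $\mu_t^i\ll\mu$ for every $t$ (here $U_p'(\infty)=\infty$), whereas for $p\in(2,3)$ Proposition \ref{prop:abs-cont} gives $\mu_t^i\ll\mu$ for a.e.\ $t$ (those $t$ with $|D^-\mathcal{U}_p|(\mu_t^i)<\infty$). Writing $\mu_t^i=\rho_t^i\mu$ and $\mu_t=\rho_t\mu$ with $\rho_t=\tfrac12(\rho_t^1+\rho_t^2)$, the equality $\int U_p(\rho_t)\,d\mu=\tfrac12\int U_p(\rho_t^1)\,d\mu+\tfrac12\int U_p(\rho_t^2)\,d\mu$ together with the pointwise inequality $U_p(\rho_t)\le\tfrac12 U_p(\rho_t^1)+\tfrac12 U_p(\rho_t^2)$ forces equality $\mu$-a.e., and strict convexity of $U_p$ then gives $\rho_t^1=\rho_t^2$ $\mu$-a.e.; by $w_p$-continuity of the curves this extends to all $t\ge 0$. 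The main obstacle is the middle step: establishing the simultaneous convexity of $w_p^p$, $\mathcal{U}_p$ and $|D^-\mathcal{U}_p|^q$ along linear interpolations (the last being the indispensable hypothesis) and using that $|D^-\mathcal{U}_p|$ is a genuine upper gradient, so that the dissipation inequality passes to the midpoint curve and collapses to equalities. A secondary technical point is the absolute continuity of $\mu_t^i$ along the flow, which for $p>2$ is exactly where Proposition \ref{prop:abs-cont}, and hence the compactness and Ahlfors-regularity assumptions, enters.
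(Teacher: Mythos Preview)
Your argument is correct and follows the same route as the paper: form the midpoint curve $\mu_t=\tfrac12(\mu_t^1+\mu_t^2)$, use convexity of $w_p$, $\mathcal{U}_p$, and $|D^-\mathcal{U}_p|^q$ along linear interpolations to pass the dissipation inequality to $(\mu_t)$, and then exploit strict convexity of $U_p$ together with the upper-gradient property of $|D^-\mathcal{U}_p|$ to force $\mu_t^1=\mu_t^2$. The paper phrases this as a contradiction (distinct flows produce a \emph{strict} dissipation inequality for the midpoint, contradicting the upper-gradient bound) rather than extracting the chain of equalities as you do, and it does not spell out the absolute continuity of $\mu_t^i$ for $p>2$ via Proposition~\ref{prop:abs-cont}; otherwise the two proofs are the same.
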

\begin{rem*}
By Lemma \ref{prop:Fisher} and \cite[Theorem 7.8]{Ambrosio2013}
convexity of $|D^{-}\mathcal{U}_{p}|^{q}$ holds if $p\le2\le q$.\end{rem*}
\begin{proof}
Assume that $(\mu_{t}^{1})$ and $(\mu_{t}^{2})$ are two distinct
gradient flows starting from $\mu_{0}$. Then we have for $i=1,2$
and all $T\ge0$ 
\begin{eqnarray*}
\mathcal{U}_{p}(\mu_{0}) & = & \mathcal{U}_{p}(\mu_{T}^{i})+\frac{1}{p}\int_{0}^{T}|\dot{\mu}_{t}^{i}|^{q}dt\\
 &  & +\frac{1}{q}\int_{0}^{T}|D^{-}\mathcal{U}_{p}|^{q}(\mu_{t}^{i})dt.
\end{eqnarray*}

Note that the curve $t\mapsto\mu_{t}=(\mu_{t}^{1}+\mu_{t}^{2})/2$
is absolutely continuous in $\mathcal{P}_{p}(M)$ and 
\[
|\dot{\mu}_{t}|^{p}\le\frac{|\dot{\mu}_{t}^{1}|^{p}+|\dot{\mu}_{t}^{2}|^{p}}{2}.
\]
Using the strict convexity of $\mathcal{U}_{p}$ and the convexity
of $|D^{-}\mathcal{U}_{p}|^{q}$ we conclude
\begin{eqnarray*}
\mathcal{U}_{p}(\mu_{0}) & > & \mathcal{U}_{p}(\mu_{T})+\frac{1}{p}\int_{0}^{T}|\dot{\mu}_{t}|^{q}dt\\
 &  & +\frac{1}{q}\int_{0}^{T}|D^{-}\mathcal{U}_{p}|^{q}(\mu_{t})dt\\
 & \ge & \mathcal{U}_{p}(\mu_{T})+\int_{0}^{T}|\dot{\mu}_{t}||D^{-}\mathcal{U}_{p}|(\mu_{t})dt
\end{eqnarray*}
But this is a contradiction to 
\[
\mathcal{U}_{p}(\mu_{t})\ge\mathcal{U}_{p}(\mu_{s})-\int_{s}^{t}|\dot{\mu}_{t}||D^{-}\mathcal{U}|(\mu_{t})dt
\]
for $s,t\in[0,\infty)$ (note $|D^{-}\mathcal{U}_{p}|$ is an upper
gradient).
\end{proof}
Finally we can identify the two flows. The theorem and its proof is
similar to \cite[Theorem 8.5]{Ambrosio2013}.
\begin{thm}
[Identification of the gradient flows] Let $r>0$ and assume that
$\mathcal{U}_{p}$ is $K$-convex in $\mathcal{P}_{p}(M)$. Then for
all $f_{0}\in L^{2}(M,\mu)$ such that $\mu_{0}=f_{0}\mu\in\mathcal{P}_{p}(M)$
the following is equivalent:
\begin{enumerate}
\item If $f_{t}$ is the gradient flow of $\operatorname{Ch}_{q}$ in $L^{2}(M,\mu)$
starting from $f_{0}$, then $\mu_{t}=f_{t}\mu$ is the gradient flow
of $\mathcal{U}_{p}$ in $\mathcal{P}_{p}(M)$ starting from $\mu_{0}$,
the map $t\mapsto\mathcal{U}_{p}(\mu_{t})$ is absolutely continuous
in $(0,\infty)$ and 
\[
-\frac{d}{dt}\mathcal{U}_{p}(\mu_{t})=|\dot{\mu}_{t}|^{p}=|D^{-}\mathcal{U}_{p}|^{q}\qquad\mbox{ for a.e.}t\in(0,\infty).
\]

\item Conversely, if we assume in addition that $|D^{-}\mathcal{U}_{p}|^{q}$
is convex w.r.t. linear interpolation, then whenever $\mu_{t}$ is
the gradient flow of $\mathcal{U}_{p}$ in $\mathcal{P}_{p}(M)$ starting
from $\mu_{0}$, then$\mu_{t}$ is absolutely continuous and its density
$f_{t}$ w.r.t. $\mu$ is the gradient flow of $\operatorname{Ch}_{q}$
in $L^{2}(M,\mu)$ starting from $f_{0}$. The same holds if the gradient
flow of $\mathcal{U}_{p}$ in $\mathcal{P}_{p}(M)$ starting $\mu_{0}$
is unique.
\end{enumerate}
\end{thm}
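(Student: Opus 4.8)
The plan is to follow the scheme of \cite[Theorem~8.5]{Ambrosio2013}: transport the $L^{2}$ heat-flow calculus into $\mathcal{P}_{p}(M)$ through the Kuwada lemma and then close the energy-dissipation identity by means of the (curvature-free) identification of the descending slope of $\mathcal{U}_{p}$ with the $q$-Fisher information. For assertion~(1), write $f_{t}=H_{t}(f_{0})$ and $\mu_{t}=f_{t}\mu$. First I would use the comparison principle in Theorem~\ref{thm:laplace-calculus} to get $f_{t}\ge 0$, and, since the hypotheses of Theorem~\ref{thm:q-mass-pres} are standing assumptions of this section, mass preservation $\int f_{t}\,d\mu=1$, so that $\mu_{t}\in\mathcal{P}(M)$. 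The Kuwada Lemma~\ref{lem:Kuwada} then gives that $t\mapsto\mu_{t}$ is absolutely continuous in $\mathcal{P}_{p}(M)$ with $|\dot{\mu}_{t}|^{p}\le\mathsf{F}_{q}(f_{t})$ for a.e.\ $t$.

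Next, since $U_{p}''(x)=x^{1-p}$, I would apply part~(3) of Theorem~\ref{thm:laplace-calculus} with $e=U_{p}$. Because $U_{p}$ is not globally Lipschitz, this needs the same truncation/regularization used in the proof of the momentum-entropy estimate, together with the two-sided bounds on $f_{t}$ furnished by the comparison principle and the entropy and $p$-moment bounds of that lemma and of Theorem~\ref{thm:q-mass-pres}; passing to the limit by monotone and dominated convergence one obtains that $t\mapsto\mathcal{U}_{p}(\mu_{t})$ is locally absolutely continuous on $(0,\infty)$ with
\[
-\frac{d}{dt}\mathcal{U}_{p}(\mu_{t})=\int_{\{f_{t}>0\}}\frac{|\nabla f_{t}|_{*}^{q}}{f_{t}^{p-1}}\,d\mu=\mathsf{F}_{q}(f_{t})\qquad\text{for a.e.\ }t.
\]
Now the $K$-convexity enters: by the discussion of geodesically convex functionals in the preliminaries, $|D^{-}\mathcal{U}_{p}|$ is an upper gradient of $\mathcal{U}_{p}$, and since $\mathcal{U}_{p}$ is lower semicontinuous on $\mathcal{P}_{p}(M)$ for $r>0$ it is also lower semicontinuous; hence Proposition~\ref{prop:gradient-Fisher} applies and gives $|D^{-}\mathcal{U}_{p}|^{q}(\mu_{t})=\mathsf{F}_{q}(f_{t})$. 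Integrating the chain rule, splitting $\mathsf{F}_{q}=\tfrac1p\mathsf{F}_{q}+\tfrac1q\mathsf{F}_{q}$, and inserting $|\dot{\mu}_{s}|^{p}\le\mathsf{F}_{q}(f_{s})$ into the first summand and the identity just obtained into the second, I arrive at
\[
\mathcal{U}_{p}(\mu_{0})-\mathcal{U}_{p}(\mu_{t})=\int_{0}^{t}\mathsf{F}_{q}(f_{s})\,ds\;\ge\;\frac1p\int_{0}^{t}|\dot{\mu}_{s}|^{p}\,ds+\frac1q\int_{0}^{t}|D^{-}\mathcal{U}_{p}|^{q}(\mu_{s})\,ds,
\]
which is the $(\mathcal{U}_{p},p)$-dissipation inequality; the reverse inequality is automatic because $|D^{-}\mathcal{U}_{p}|$ is an upper gradient, so $t\mapsto\mu_{t}$ is the gradient flow of $\mathcal{U}_{p}$ starting from $\mu_{0}$ (convergence $\mu_{t}\to\mu_{0}$ as $t\to0$ following from $f_{t}\to f_{0}$ in $L^{2}$ and finiteness of $\int_{0}^{t}|\dot{\mu}_{s}|^{p}\,ds$). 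Equality throughout then forces $-\tfrac{d}{dt}\mathcal{U}_{p}(\mu_{t})=|\dot{\mu}_{t}|^{p}=|D^{-}\mathcal{U}_{p}|^{q}(\mu_{t})$ for a.e.\ $t$, together with the asserted absolute continuity of $t\mapsto\mathcal{U}_{p}(\mu_{t})$, which is~(1).

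For assertion~(2), the extra hypothesis that $|D^{-}\mathcal{U}_{p}|^{q}$ is convex with respect to linear interpolation makes Theorem~\ref{thm:gradunique} applicable --- its remaining hypotheses, lower semicontinuity of $|D^{-}\mathcal{U}_{p}|^{q}$ and strict convexity of $\mathcal{U}_{p}$, follow from $K$-convexity and from $U_{p}''>0$ --- so the gradient flow of $\mathcal{U}_{p}$ starting from $\mu_{0}$ is unique; the last clause of the statement grants this directly. Since part~(1) exhibits $t\mapsto H_{t}(f_{0})\mu$ as one such gradient flow, any gradient flow $\mu_{t}$ of $\mathcal{U}_{p}$ starting from $\mu_{0}$ must coincide with it, hence is absolutely continuous with density $f_{t}=H_{t}(f_{0})$, which is the gradient flow of $\operatorname{Ch}_{q}$ in $L^{2}(M,\mu)$ starting from $f_{0}$.

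The step I expect to be the main obstacle is the chain rule $-\tfrac{d}{dt}\mathcal{U}_{p}(\mu_{t})=\mathsf{F}_{q}(f_{t})$ and the attendant absolute continuity of $t\mapsto\mathcal{U}_{p}(\mu_{t})$: because $U_{p}$ fails to be globally Lipschitz --- its derivative blows up at the origin when $p>2$ and it grows superlinearly when $p<2$ --- part~(3) of Theorem~\ref{thm:laplace-calculus} cannot be quoted verbatim, and one has to reproduce the truncation argument of the momentum-entropy estimate, controlling $f_{t}$ from both sides via the comparison principle and keeping the entropy and $p$-moments bounded via Theorem~\ref{thm:q-mass-pres}. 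The other point on which everything hinges is Proposition~\ref{prop:gradient-Fisher}, the identity $|D^{-}\mathcal{U}_{p}|^{q}=\mathsf{F}_{q}$: its hypothesis (lower semicontinuity of the slope) is indeed supplied by $K$-convexity, but in the $p>2$ regime one is implicitly leaning on the structural assumptions behind Theorem~\ref{thm:grad-upper-bound}, and this is the link that makes the dissipation chain close.
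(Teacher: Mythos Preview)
Your proposal is correct and follows essentially the same route as the paper: Kuwada's lemma for the metric-derivative bound, the chain rule $-\tfrac{d}{dt}\mathcal{U}_{p}(\mu_{t})=\mathsf{F}_{q}(f_{t})$ coming from Theorem~\ref{thm:laplace-calculus}(3), the identification $|D^{-}\mathcal{U}_{p}|^{q}=\mathsf{F}_{q}$ from Proposition~\ref{prop:gradient-Fisher} (whose slope-lsc hypothesis is supplied by $K$-convexity), and then the dissipation inequality; part~(2) is reduced to uniqueness exactly as in the paper. You are in fact more explicit than the paper about the truncation needed to justify the chain rule and about the implicit reliance on the structural hypotheses of Theorem~\ref{thm:grad-upper-bound} when $p>2$, both of which the paper's proof passes over silently.
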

\begin{proof}
By $K$-convexity of $\mathcal{U}_{p}$ we know that $|D^{-}\mathcal{U}_{p}|$
is an upper gradient and 
\[
|D^{-}\mathcal{U}_{p}|^{q}(\rho\mu)=\mathsf{F}_{q}(\rho)
\]
thus by the Kuwada lemma we know that if $f_{t}$ is the gradient
flow of the $q$-Cheeger energy then 
\[
|\dot{\mu}_{t}|^{p}\le\int\frac{|\nabla f_{t}|^{q}}{f_{t}^{p-1}}d\mu=\mathsf{F}_{q}(f_{t})
\]
and 
\[
t\mapsto\mathcal{U}_{p}(\mu_{t})
\]
 is absolutely continuous with 
\[
\frac{d}{dt}\mathcal{U}_{p}(\mu_{t})=-\int\frac{|\nabla f_{t}|^{q}}{f_{t}^{p-1}}d\mu.
\]
Hence 
\[
\int\frac{|\nabla f_{t}|^{q}}{f_{t}^{p-1}}d\mu\ge\frac{1}{p}|\dot{\mu}_{t}|^{p}+\frac{1}{q}|D^{-}\mathcal{U}_{p}|^{p}
\]
so that $\mu_{t}$ satisfies the $\mathcal{U}_{p}$-dissipation inequality,
i.e.
\begin{eqnarray*}
\mathcal{U}_{p}(\mu_{0})-\mathcal{U}_{p}(\mu_{t}) & = & \int_{0}^{t}\int\frac{|\nabla f_{s}|^{q}}{f_{s}^{p-1}}d\mu ds\\
 & \ge & \frac{1}{p}\int_{0}^{t}|\dot{\mu}_{t}|^{p}ds+\frac{1}{q}\int_{0}^{t}|D^{-}\mathcal{U}_{p}|^{q}ds
\end{eqnarray*}
and $\mu_{t}$ is the gradient flow of $\mathcal{U}_{p}$ in $\mathcal{P}_{p}(M)$
starting at $\mu_{0}$. Absolute continuity of $t\mapsto\mathcal{U}_{p}(\mu_{t})$
in $(0,\infty)$ implies
\begin{eqnarray*}
\frac{d}{dt}\mathcal{U}_{p}(\mu_{t}) & = & -|\mu_{t}||D^{-}\mathcal{U}_{p}|\\
 & = & -|\mu_{t}|^{p}\\
 & = & -|D^{-}\mathcal{U}_{p}|^{q}.
\end{eqnarray*}

For the second part, assume that $t\mapsto\tilde{f}_{t}$ is the gradient
flow of the $q$-Cheeger energy starting at $f_{0}$. By the previous
part we know that $\tilde{\mu}_{t}=\tilde{f}_{t}\mu$ is also a gradient
flow of $\mathcal{U}_{p}$. Uniqueness (Theorem \ref{thm:gradunique}
above) implies that $\mu_{t}=\tilde{\mu}_{t}$ for all $t\ge0$.
\end{proof}
\bibliographystyle{amsalpha}
\bibliography{bib}

\end{document}